\definecolor{background}{RGB}{230,230,230} 
\definecolor{arc1}{RGB}{0,128,128} 
\definecolor{arc2}{RGB}{210,105,30}	
\definecolor{arc3}{RGB}{80,0,80}		
\definecolor{arc4}{RGB}{34,139,34}		
\newcommand{\Z}{\mathbb{Z}}
\newcommand{\C}{\mathbb{C}}
\newcommand{\cC}{\mathcal{C}}
\newcommand{\cF}{\mathcal{F}}
\newcommand{\cB}{\mathcal{B}}
\newcommand{\cT}{\mathcal{T}}
\newcommand{\cW}{\mathcal{W}}
\tikzstyle{arrow} = [thick,->,>=stealth]
\tikzstyle{mathy}=[nodes={%
\theoremstyle{plain}
\newtheorem{theorem}{Theorem}[subsection]
\newtheorem*{thm}{Theorem}
\newtheorem{lemma}[theorem]{Lemma}
\newtheorem{proposition}[theorem]{Proposition}
\newtheorem{corollary}[theorem]{Corollary}
\newenvironment{customthm}[1]{\innercustomthm}{\endinnercustomthm}
\theoremstyle{definition}
\newtheorem{example}[theorem]{Example}
\newtheorem{definition}[theorem]{Definition}
\newtheorem{remark}[theorem]{Remark}
\numberwithin{equation}{section}
\newcommand{\Canakci}{\c{C}anak\c{c}\i}
\newcommand*\circled[1]{\tikz[baseline=(char.base)]{
            \node[shape=circle,draw,inner sep=2pt] (char) {#1};}}
\title[Infinite friezes and triangulations of annuli]{Infinite friezes and triangulations of annuli}
\author[K. Baur]{Karin Baur}
\address{School of Mathematics, University of Leeds, Leeds, United Kingdom. 
On leave from the University of Graz, Graz, Austria} 
\email{k.u.baur@leeds.ac.uk}
\author[\.{I}. \Canakci]{\.{I}lke \Canakci}
\address{Department of Mathematics, VU Amsterdam, 1081 HV Amsterdam, the Netherlands }
\email{i.canakci@vu.nl}
\author[K. M. Jacobsen]{Karin M. Jacobsen}
\address{Fakult\"at f\"ur Mathematik, Universit\"at Bielefeld, 33501 Bielefeld, Germany}
\email{karin.jacobsen@ntnu.no}
\author[M. Kulkarni]{Maitreyee C. Kulkarni}
\address{Max-Planck-Institut f\"ur Mathematik, Bonn, Germany}
\email{mkulkarni@mpim-bonn.mpg.de}
\author[G. Todorov]{Gordana Todorov}
\address{Department of Mathematics, Northeastern University, Boston, United States}
\email{g.todorov@northeastern.edu}
\subjclass[2010]{16G20 (Primary); 05E10 (Secondary)}
\begin{document}

\begin{abstract}
It is known that any infinite periodic frieze comes from a triangulation of an annulus by \cite[Theorem 4.6]{bpt}. In this 
paper we show that each infinite periodic frieze determines a triangulation of an annulus in essentially a unique way. 
Since each triangulation of an annulus determines a pair of friezes, we study such pairs and 
show how they determine each other. 
We study associated module categories and determine the growth coefficient of the pair of friezes in 
terms of modules as well as their quiddity sequences. 
\end{abstract}

\maketitle

\tableofcontents

%
\section{Introduction}  


The notion of {\em finite friezes} was defined in 1971 by Coxeter in \cite{cox}. 
A finite frieze is a grid consisting of a finite number of rows of positive integers satisfying a determinant rule for each diamond 
in the grid. Each row is of infinite length. The first two and the last two rows are fixed, with entries 0 and 1 as shown in Figure~\ref{fig:finitefrieze}. Because of the determinant rule, the first non-trivial row completely determines the frieze and is called the \emph{quiddity row}. Finite friezes and their properties are well understood. It was proven by Conway and Coxeter (\cite{CC1,CC2}) that every finite frieze has periodic quiddity row and they showed that: 
\begin{thm} (\cite[Question 27]{CC1,CC2})
There is a bijection between periodic finite friezes and triangulations of regular polygons.
\end{thm}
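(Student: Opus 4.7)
The plan is to construct explicit inverse maps between the two sets and verify they behave as expected. Given a triangulation $T$ of a convex $(n+3)$-gon with vertices labelled $1, 2, \ldots, n+3$ cyclically, define $a_i$ to be the number of triangles of $T$ incident to vertex $i$. This produces a cyclic sequence $(a_1, \ldots, a_{n+3})$ which I would propose as the quiddity row of a frieze of width $n$. The first task is to check that filling in the grid using the determinant rule $m_{i-1,j}\,m_{i,j+1} - m_{i,j}\,m_{i-1,j+1} = 1$, starting from this quiddity row and the two boundary rows of $0$s and $1$s, yields positive integers throughout and periodicity that closes up consistently. The natural way to do this is to interpret the entry $m_{i,j}$ as the number of triangles of $T$ that lie between the vertices $i$ and $j$ (equivalently, the number of triangles intersected by a generic arc from $i$ to $j$), and to verify the Ptolemy-like identity combinatorially.

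Next I would prove injectivity of this map. The key observation is that the quiddity sequence determines the frieze, so it suffices to show that the quiddity sequence determines the triangulation. I would proceed by induction on $n$: any triangulation of a polygon with at least four vertices has at least one \emph{ear}, i.e.\ a vertex $i$ with $a_i = 1$, corresponding to a boundary triangle. Removing this ear reduces $T$ to a triangulation $T'$ of an $(n+2)$-gon whose quiddity sequence is obtained from that of $T$ by deleting $a_i=1$ and subtracting $1$ from its two cyclic neighbours. Since the reduction on the quiddity sequence is determined by $i$, and since distinct triangulations must differ somewhere, induction gives injectivity.

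For surjectivity I would establish the essential lemma that any quiddity row of a periodic finite frieze must contain an entry equal to $1$. This is the main obstacle: one has to rule out quiddity sequences of positive integers all $\geq 2$, using the positivity of the entries further down in the frieze together with periodicity. A clean approach is to show, via the determinant rule and the shape of the frieze, that if every $a_i \geq 2$ then some later entry would be forced to be $\leq 0$ or the frieze would fail to close up with the required boundary row of $1$s. Once the lemma is in hand, choose an index $i$ with $a_i = 1$, form the reduced sequence $(a_1,\ldots, a_{i-1}-1, a_{i+1}-1, \ldots, a_{n+3})$, and check that it is again the quiddity sequence of a (width $n-1$) periodic finite frieze. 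By induction this reduced sequence corresponds to a triangulation of an $(n+2)$-gon, and gluing back the ear at $i$ produces a triangulation of the $(n+3)$-gon whose quiddity sequence is the original one.

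Finally I would verify that the two constructions are mutually inverse, which is immediate from the inductive description: both reductions remove an ear on the geometric side and a $1$-entry on the combinatorial side in a compatible way. The hardest step is genuinely the existence of a $1$ in the quiddity row, since everything else flows from a straightforward ear-removal induction once that structural fact is established.
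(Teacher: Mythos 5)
This theorem is not proved in the paper at all (it is quoted from Conway--Coxeter), and your overall architecture is exactly the classical one: ear removal on the geometric side matching reduction at a $1$ on the frieze side, with the key lemma that the quiddity row of a finite frieze must contain a $1$ (your sketch of that lemma --- all entries $\ge 2$ forces the rows to grow and never return to a row of $1$s --- is the standard and correct argument). However, the step you propose for well-definedness of the map from triangulations to friezes rests on a false identity: the entry $m_{i,j}$ is \emph{not} the number of triangles met by a generic arc from $i$ to $j$. Minimal counterexample: triangulate a hexagon with the diagonals $2$--$6$, $2$--$5$, $3$--$5$, so the quiddity row is $(1,3,2,1,3,2)$. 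The arc from $1$ to $4$ crosses all three diagonals and hence meets exactly $4$ triangles, but the frieze entry attached to it is the continuant $a_2a_3-1=3\cdot 2-1=5$ (equivalently, by the Broline--Crowe--Isaacs count of assignments of distinct incident triangles to the vertices $2,3$). The discrepancy only grows: for the octagon triangulated by $2$--$8$, $2$--$7$, $3$--$7$, $3$--$6$, $4$--$6$ (quiddity $(1,3,3,2,1,3,3,2)$) the entry for the diagonal $1$--$5$ is $13$, which already exceeds the total number of triangles in the polygon, so no interpretation as a count of triangles crossed can satisfy the diamond rule. The arc-crossing count is $d+1$ with $d$ the number of diagonals crossed, whereas frieze entries grow multiplicatively.

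This is repairable without changing your plan. Either replace the interpretation by the correct combinatorial model --- $m_{i,j}$ is the number of ways to assign to each vertex strictly between $i$ and $j$ a triangle of $T$ incident with it, with distinct vertices receiving distinct triangles (Broline--Crowe--Isaacs); this quantity does satisfy the unimodular diamond rule and specializes to the quiddity row --- or, more in the spirit of the rest of your argument, prove well-definedness by the same ear induction: gluing an ear at a vertex corresponds to the reverse reduction $(\dots,a,b,\dots)\mapsto(\dots,a+1,1,b+1,\dots)$ on quiddity sequences, and one checks directly that this operation takes the quiddity sequence of a finite frieze of order $n-1$ to one of order $n$, with the triangle (quiddity $(1,1,1)$) as base case. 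With that substitution, your injectivity argument should also be phrased slightly more carefully (if two triangulations share the quiddity row, a common index with $a_i=1$ is an ear of \emph{both}, so the same triangle can be removed from each before invoking induction), and the rest of the proof goes through as you describe.
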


\begin{figure}[ht!]
\begin{center}
\begin{tikzpicture}[scale=.7, inner sep=4pt]
\node (00) at (-6,6) {$\ldots$};
\node (01) at (-4,6) {0};
\node (02) at (-2,6) {0};
\node (ai+) at (0,6) {0};
\node (03) at (2,6) {0};
\node (04) at (4,6) {0};
\node (05) at (6,6) {$\ldots$};

\node (10) at (-7,5) {$\ldots$};
\node (11) at (-5,5) {1};
\node (12) at (-3,5) {1};
\node (12) at (-1,5) {1};
\node (12) at (1,5) {1};
\node (14) at (3,5) {1};
\node (15) at (5,5) {1};
\node (16) at (7,5) {$\ldots$};

\node (ai+) at (-6,4) {$\ldots$};
\node (ai) at (-4,4) {1};
\node (ai+) at (-2,4) {3};
\node (ai+) at (0,4) {1};
\node (aj-) at (2,4) {3};
\node (aj) at (4,4) {1};
\node (ai+) at (6,4) {$\ldots$};

\node (10) at (-7,3) {$\ldots$};
\node (11) at (-5,3) {2};
\node (12) at (-3,3) {2};
\node (12) at (-1,3) {2};
\node (12) at (1,3) {2};
\node (14) at (3,3) {2};
\node (15) at (5,3) {2};
\node (16) at (7,3) {$\ldots$};

\node (ai+) at (-6,2) {$\ldots$};
\node (ai) at (-4,2) {3};
\node (ai+) at (-2,2) {1};
\node (ai+) at (0,2){3};
\node (aj-) at (2,2) {1};
\node (aj) at (4,2) {3};
\node (ai+) at (6,2) {$\ldots$};

\node (10b) at (-7,1) {$\ldots$};
\node (11b) at (-5,1) {1};
\node (12b) at (-3,1) {1};
\node (12b) at (-1,1) {1};
\node (13b) at (1,1) {1};
\node (14b) at (3,1) {1};
\node (15b) at (5,1) {1};
\node (16b) at (7,1) {$\ldots$};

\node (00b) at (-6,0) {$\ldots$};
\node (01b) at (-4,0) {0};
\node (02b) at (-2,0) {0};
\node (ai+b) at (0,0) {0};
\node (03b) at (2,0) {0};
\node (04b) at (4,0) {0};
\node (05b) at (6,0) {$\ldots$};

\end{tikzpicture}
\end{center}
\caption{A finite frieze.}
\label{fig:finitefrieze}
\end{figure}

\noindent Caldero and Chapoton showed that finite friezes are connected to the theory of cluster algebras via the Caldero--Chapoton map and through the association with triangulations of regular polygons \cite{cc}. 

In this paper, we consider another type of friezes called {\em infinite periodic friezes} (see Figure~\ref{infinitefrieze} and Definition \ref{periodic}). Similar to finite friezes, infinite periodic friezes start with a row of 0's and a row of 1's; their quiddity row is also periodic. But the number of rows in this case is infinite. In 2017, it was shown in \cite{bpt} (also see Def~\ref{def:triang-frieze}) that:
\begin{thm} (\cite[Theorem 4.6]{bpt}) Each infinite periodic frieze corresponds to a triangulation of an annulus.
\end{thm}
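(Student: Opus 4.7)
The plan is to adapt the Conway--Coxeter proof for polygons to the annulus, keeping the ear-removal induction but adding a new base case unique to the cylindrical topology. Let $\cF$ be the infinite periodic frieze with quiddity sequence $(a_i)_{i\in\Z}$ of period $n$. The target is a triangulation of an annulus with $n$ marked points on one boundary and $m$ marked points on the other, where the parameter $m$ must itself be extracted from $\cF$.

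First I would run the inductive step as in the polygon case: if some $a_i = 1$, declare that position an \emph{ear} (a triangle of the eventual triangulation whose two non-arc sides are consecutive boundary edges), and remove the ear by deleting $a_i$ from the quiddity sequence and replacing $a_{i-1}, a_{i+1}$ by $a_{i-1}-1, a_{i+1}-1$. One checks that the reduced sequence is again a valid infinite periodic quiddity, applies the inductive hypothesis to obtain a triangulation of a smaller annulus (with outer boundary shrunk by one marked point but the same $m$), and reglues the ear. The verification that the frieze identities survive this reduction is by the same local diamond computation used in the finite case, since the modification is entirely supported in a neighborhood of position $i$.

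The main obstacle is the base case $a_i \geq 2$ for all $i$. Unlike polygons, which always possess at least three ears, an annulus can be triangulated with zero ears on a given boundary --- for instance, a pure fan of bridging arcs emanating from a single inner vertex has this property. In the ear-free situation I would argue that every outer vertex must be incident to at least one bridging arc reaching the inner boundary, and that the entries $a_i$ record how the fans at adjacent outer vertices interleave; this data should determine the triangulation up to cyclic rotation of the inner boundary. The delicate point is identifying the integer $m$ intrinsically. The natural candidate is a spectral invariant of the monodromy matrix
\[
M \;=\; \prod_{i=1}^{n} \begin{pmatrix} a_i & -1 \\ 1 & 0 \end{pmatrix},
\]
whose eigenvalues control the exponential growth of entries down successive rows of $\cF$ and therefore force the existence of a second, inner boundary of a specific combinatorial type. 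The final consistency check --- and the hardest part of the argument --- is that the intrinsic value of $m$ extracted from $M$ agrees with the value produced by successive ear removals, so that the induction and the base case assemble into a single well-defined annulus and triangulation.
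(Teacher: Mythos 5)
Your reduction step (remove a $1$, subtract one from the neighbours, reglue an ear) is fine and matches the reduction/skeletal philosophy used here and in \cite{bpt}, but the base case --- which is the entire content of the theorem --- is left as a hope rather than a proof, and the specific idea you propose for it would not work. The statement is an existence statement, and the annulus is far from unique: one can always enlarge the inner boundary by gluing ears onto it without affecting the outer quiddity row, so there is no intrinsic integer $m$ to be extracted from the frieze, spectrally or otherwise. The trace of your monodromy matrix is the growth coefficient $s_q$ of the frieze, and it does not determine $m$ (the paper exhibits quiddity sequences of different lengths with equal growth coefficient, e.g.\ $(1,4,4)$ and $(3,3)$, as well as pairs with equal growth coefficient that pair with no common annulus at all). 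What is actually needed in the base case is an explicit construction, and it is elementary rather than spectral: given $a_i\ge 2$ for all $i$ (not all equal to $2$), place $a_i-1$ arc segments at the $i$-th outer vertex and connect them, fan by fan, to successively created inner marked points, so that consecutive fans share one inner endpoint; the number of triangles at vertex $i$ is then $a_i$ by inspection, and the number of inner marked points comes out automatically as $m=\sum_i(a_i-2)$. This is exactly the algorithm of \cite[Corollary 4.5]{bpt}, reproduced in the proof of Proposition~\ref{prop:frieze-tria}; your ``the data should determine the triangulation'' sentence is precisely the step that has to be carried out and verified.

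A second genuine gap is the trivial quiddity sequence $q=(2,2,\dots,2)$, which survives your reduction (it has no $1$'s) but is not covered by your base-case picture: no triangulation of an annulus by finite arcs realizes it, since your own observation that every outer vertex must meet a bridging arc would force some entry to exceed $2$. It is realized only by an asymptotic triangulation with spiraling arcs (or by a triangulation of a punctured disc), as in Example~\ref{ex:spiraling}; any complete proof of the cited theorem must either treat this case separately or make explicit which notion of triangulation is being used.
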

We will see later that each triangulation of an annulus defines a pair of infinite periodic friezes; one 
associated with each boundary component. 
One of the goals of this paper is to establish a one-to-one correspondence between such pairs of  infinite periodic friezes and triangulations of an annulus, see Proposition~\ref{prop:frieze-tria}. 
In order to get such a correspondence, we define skeletal triangulations and skeletal friezes. A \emph{skeletal triangulation} is a triangulation with only bridging arcs in it (arcs connecting the two boundary components).
A \emph{skeletal frieze} is a frieze whose quiddity sequence has no 1's in it 
and which is different from $q=(2,\dots,2)$. 
With this in mind, we prove the following theorem.
\begin{customthm}{A}\label{thmA}
(Theorem~\ref{thm:unique-frieze}) 
Given a skeletal frieze, there is exactly one other skeletal frieze such that this pair of friezes corresponds to triangulation of an annulus.
\end{customthm}

Every skeletal frieze (or skeletal triangulation
of an annulus) represents an infinite family of infinite periodic friezes (or triangulations of an annulus) (see Theorem~\ref{thm:unique-frieze}). Every skeletal frieze corresponds to a non-oriented cyclic quiver and each such quiver will recover the corresponding pair of skeletal friezes (Theorem~ \ref{thm:quiver to quiddity}).

An interesting phenomenon of infinite periodic friezes is that for a frieze of period $n$, the difference between an entry in its $n$th row and the entry right above it in row $n-2$ is constant. This difference is called the \emph{growth coefficient} of the frieze. It is shown in \cite[Theorem 3.4]{bfpt} that for a pair of friezes coming from a triangulation of an annulus, the growth coefficients of the two friezes are equal. 
We show that:
\begin{customthm}{B}\label{thmB}(Corollary~\ref{cor:frieze-formula}) Consider an infinite periodic frieze with quiddity sequence \(q= (a_1, a_2, \ldots, a_n)\). Then its growth coefficient is 
\[s_q=\left(\sum_{I}(-1)^{\ell_I} \prod_{k\in I}a_k\right)+\delta_n\] where $I$'s are certain subsets of \{$1,2, \ldots, n$\}. The integers $\ell_I$ and $\delta_n$ are defined as in Section \ref{sec:formula-matrices}.
\end{customthm}

Infinite periodic friezes are also related to cluster algebras through the corresponding triangulations. This gives us a relation to cluster categories of type \(\widetilde{A}\) as defined by \cite{bmrrt}, through the Caldero--Chapoton map \cite{cc}. In particular, we explain in Sections \ref{sec:frieze-to-category} and \ref{sec:modules-growth} how infinite periodic friezes are related to the non-homogeneous tubes in the Auslander--Reiten quivers of cluster-tilted algebras of type \(\widetilde A\). Hence we get a representation theoretic interpretation of  infinite periodic frieze patterns and their growth coefficients.

\begin{customthm}{C}\label{thmC} (Corollary~\ref{cor:growth-cc-map})
Let $q=(a_1,\dots, a_n)$ be a quiddity sequence, let $\cB$ be the associated rank $n$ tube. Let 
$M$ be any indecomposable in $\cB$ at level $n$ and 
$\widetilde{M}$ be the indecomposable right below it in the Auslander--Reiten quiver, at level $n+2$. 
Then we have 
\[
s(M)-s(\widetilde{M})=s_{q},
\]
where $s(M)$ is a generalized version of the number of submodules of $M$ as in Section \ref{sec:frieze-to-category}.
\end{customthm}
The paper is organized as follows.
In Section~\ref{sec:notions}, we recall the notion of friezes and triangulations of annuli. 
Moreover, we introduce skeletal friezes and skeletal triangulations and show how these are related to each other. 
In Section~\ref{sec:pairs}, we describe how to associate a pair of  infinite periodic friezes to a triangulation, and study the uniqueness of this correspondence when restricted to skeleta. Furthermore, we give a direct map between non-oriented cyclic quivers and skeletal friezes by associating a pair of quiddity sequences to every non-oriented cycle, see Section~\ref{sec:quiv-quid}. 
This association has indirectly been known before, via triangulations. To our knowledge, 
the direct map is new. 
In Section~\ref{sec:growth} we recall the growth coefficient of an infinite periodic frieze and some results 
from~\cite{bfpt}. We then state our result (Corollary~\ref{cor:frieze-formula}) which computes the growth coefficient 
using only the quiddity sequence. This leads us to the representation theoretic interpretation of the differences of entries in the 
 infinite periodic friezes in terms of certain modules, see Theorem~\ref{thm:repth}, and hence of the growth 
coefficient, see Corollary~\ref{cor:growth-cc-map}.

%
\section{Triangulations, friezes and quivers} \label{sec:notions}

We have three players here: triangulations, friezes and quivers. 
There are clear correspondences between them, some of which are already well known 
and some which we describe in the paper and prove their properties. In order to have uniqueness in such 
correspondences (up to certain factors) we consider 
certain reduced versions for each of these notions, 
namely when they are ``skeletal", 
so that we can go easily between the three players. 

\subsection{Finite and infinite friezes}

Friezes were introduced by Coxeter in 1971 \cite{cox}. 
A {\em frieze} consists of a possibly infinite number of rows of 
positive integers as shown in Figure \ref{infinitefrieze}. The first and second rows consist of 0's 
and 1's respectively. The rows of 0's and 1's are called {\em trivial}. Every diamond of adjacent entries
\begin{center}
 $b$ \\
$a$ \qquad $d$ \\ 
$c$
\end{center}

\noindent in a frieze satisfies the determinant rule $ad-bc=1$.
The {\em order} of a frieze is the number of 
non-trivial rows in the frieze. A frieze is called {\em finite} if it has finite order. The last row of a finite 
frieze consists of 0's and the second to last of 1's.  A frieze is {\it infinite} if it has infinite order. An example of a finite frieze is shown in Figure \ref{fig:finitefrieze}. That frieze has order $3$.
\begin{definition} \label{periodic}A frieze $\cF$ is determined 
by its first non-trivial row $(a_i)_{i\in \Z}$, called the \emph{quiddity row}. 
A frieze is called {\em periodic} if the quiddity row is periodic. So for any $i$, an $n$-periodic frieze $\cF$ is 
determined completely by an $n$-tuple $q\!=\!(a_i,\!..,a_{i+n-1}\!)$ in its quiddity 
row. 
Every such tuple is a {\em quiddity sequence of $\cF$}. 
\end{definition}
Throughout the paper we will consider infinite periodic friezes, which we will sometimes simply call \emph{infinite friezes}. 
We always consider quiddity sequences up to cyclic equivalence, i.e. 
$(a_1,\dots, a_n)\sim (a_2,\dots, a_n,a_1\!)$. 

\begin{figure}[ht!]
\begin{center}
\begin{tikzpicture}[scale=.8, inner sep=4pt]
\node (00) at (-6,6) {$\ldots$};
\node (01) at (-4,6) {0};
\node (02) at (-2,6) {0};
\node (ai+) at (0,6) {0};
\node (03) at (2,6) {0};
\node (04) at (4,6) {0};
\node (05) at (6,6) {$\ldots$};
\node (10) at (-7,5) {$\ldots$};
\node (11) at (-5,5) {1};
\node (12) at (-3,5) {1};
\node (12) at (-1,5) {1};
\node (12) at (1,5) {1};
\node (14) at (3,5) {1};
\node (15) at (5,5) {1};
\node (16) at (7,5) {$\ldots$};

\node (a-2) at (-6,4) {$\ldots$};
\node (a-1) at (-4,4) {\(a_{-1}\)};
\node (a0) at (-2,4) {\(a_{0}\)};
\node (a1) at (0,4) {\(a_{1}\)};
\node (a2) at (2,4) {\(a_{2}\)};
\node (a3) at (4,4) {\(a_{3}\)};
\node (a4) at (6,4) {$\ldots$};

\node (10) at (-7,3) {$\ldots$};
\node (11) at (-5,3) {\(a_{-2,-1}\)};
\node (12) at (-3,3) {\(a_{-1,0}\)};
\node (12) at (-1,3) {\(a_{0,1}\)};
\node (12) at (1,3) {\(a_{1,2}\)};
\node (14) at (3,3) {\(a_{2,3}\)};
\node (15) at (5,3) {\(a_{3,4}\)};
\node (16) at (7,3) {$\ldots$};

\node (ai+) at (-6,2) {$\ldots$};
\node (ai) at (-4,2) {\(a_{-2,0}\)};
\node (ai+) at (-2,2) {\(a_{-1,1}\)};
\node (ai+) at (0,2) {\(a_{0,2}\)};
\node (aj-) at (2,2) {\(a_{1,3}\)};
\node (aj) at (4,2) {\(a_{2,4}\)};
\node (ai+) at (6,2) {$\ldots$};

\node (ai) at (-5,1) {$\vdots$};
\node (ai+) at (-3,1) {$\vdots$};
\node (ai+) at (-1,1) {$\vdots$};
\node (aj-) at (1,1) {$\vdots$};
\node (aj) at (3,1) {$\vdots$};
\node (aj) at (5,1) {$\vdots$};

\end{tikzpicture}
\end{center}
\caption{The layout of an infinite frieze.} 
\label{infinitefrieze}
\end{figure}
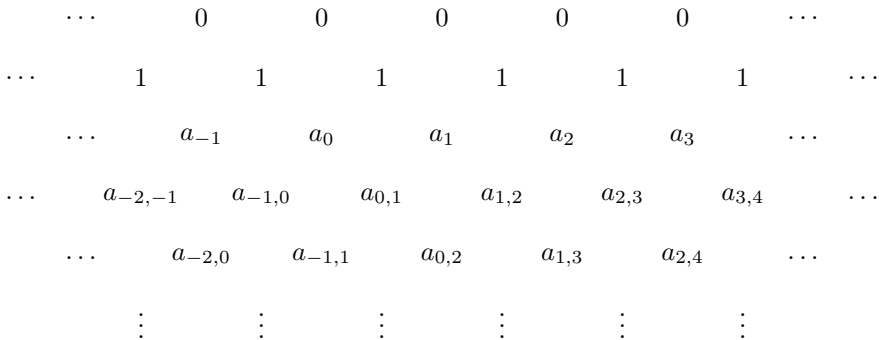

%
\subsection{Skeletal friezes}\label{sec:skeletal_friezes} Quiddity sequences and hence friezes can be simplified through the process of \emph{reduction}, 
as introduced in \cite{CC1, CC2} and described below. This will be a key ingredient for the results in Section~\ref{sec:pairs}: 
These reduced friezes 
help us to establish a bijective 
correspondence between infinite periodic friezes and triangulations of annuli. 

\begin{definition}\label{frieze reduction}
A {\em reduction (at a $1$)} of a quiddity sequence $q$ of a frieze is obtained by 
subtracting 1 from both neighbouring entries in the quiddity sequence 
$q$ provided that the neighbouring entries are $>1$.\\
More precisely, a {\em reduction $\rho$} 
of a quiddity sequence $q=(a_1, \dots,a_r)$ 
at $a_i=1$
is obtained by:\\
(a) reducing both $a_{i-1}$ and $a_{i+1}$ by $1$ and deleting $a_i=1$ in $q$ if $r\geq 3$ and $a_{i-1}, a_{i+1}\geq2$, or \\
(b) if $q=(1,k)$ then deleting $1$ and replacing $k$ by $k$-$2$, i.e. $\rho(q)=(k$-$2)$ if $k\geq 3$.
\end{definition}
We note that reduction primarily makes sense when considering quiddity sequences where no two consecutive entries are equal to one. The full set of cases is discussed below.

\begin{example} 
For instance the reduction of the 
quiddity sequence $q=(1,5)$ is $q'=(3)$,
i.e. $\rho((1,5))=(3)$.
\end{example}

\begin{remark}\label{rmk:reverse reduction} Given a quiddity sequence, we can also perform a 
reverse reduction by inserting a new entry 1 into the sequence and increasing the neighbouring entries by 1. A reverse reduction of the quiddity sequence $q=(2)$ is then $q=(1,4)$, and a reverse reduction of $q=(2,3)$ is $q=(3,1,4)$.
\end{remark}

\begin{definition}
A {\em peripheral triangle} or {\em ear} of a triangulation is a triangle whose sides are: two boundary segments and a peripheral arc (Definition \ref{bridging arc}) (so, all 3 vertices are on the same boundary component). 
\end{definition}

\begin{remark}\label{rem:reduce-triangles}
(a) 
It is well known that finite (infinite) friezes correspond to triangulations of polygons (annuli). We will describe this correspondence for  infinite periodic friezes in Section~\ref{sec:tria-frieze}.
\\
(b) 
In case of both finite and infinite friezes, reduction of a quiddity sequence at a 1 corresponds to deletion of the corresponding peripheral triangle (ear) in the corresponding triangulation (see questions 23, 26, 27 of \cite{CC1,CC2} for finite friezes).
\end{remark}

\begin{remark}\begin{enumerate}
\item If we repeatedly apply reduction to the quiddity sequence of a {\em finite} frieze, we will eventually obtain 
a quiddity sequence where all entries are 1. 
This can be proved using: i) all finite friezes correspond 
to triangulations of polygons, ii) each reduction of the quiddity sequence reduces the number of 
vertices of the polygon 
by Remark~\ref{rem:reduce-triangles}
and iii) the quiddity sequence for the triangle is $(1,1,1)$.
Also a short reference is \cite[Question 23]{CC1,CC2}.
\item 
However, if we repeatedly apply reduction to the quiddity sequence of an {\em infinite} frieze, 
all $1$'s will eventually disappear since otherwise, repeated reductions would give the quiddity sequence $(1)$. 
If the period is 1, the 
quiddity sequence is $(a)$ with $a\ge 2$. For period 2, we can have $(1,a)$ with $a\ge 4$, which we can reduce 
to $(2)$ and $(a,b)$ with $a,b\ge 2$. If the period is at least 3 and if there is an entry 
$1$ in the quiddity sequence, we reduce and get a 
sequence with one entry less, it is still a quiddity sequence of an  infinite periodic frieze (\cite[Theorem 2.7]{tsch}). 
We iterate until all entries are $\ge 2$. Observe that there can never be two entries $a_i=a_{i+1}=1$ 
as by the diamond rule, the entry below them would be 0, so the frieze would not be infinite.
\item
If we start with a quiddity sequence of an  infinite periodic frieze, the final quiddity sequence where all 
entries are $\neq 1$ is independent of the sequence of reductions. 
Indeed, since any reduction on the quiddity sequence corresponds to removing a peripheral triangle (ear) in the surface by Remark \ref{rem:reduce-triangles}(b), and two peripheral triangles may share at most a common point, the order 
in which
we remove peripheral triangles does not matter. 
As a consequence the notions reduced quiddity sequence and reduced frieze 
(Definitions~\ref{def:skeleton-quid} and~\ref{def:skeleton-frieze}) are well-defined. 
\end{enumerate}
\end{remark}

\begin{definition} \label{def:skeleton-quid} 
Given a quiddity sequence $q$ of an infinite periodic frieze, the \emph{reduced 
quiddity sequence $q^s$}
is obtained from $q$ by successively applying reduction to $q$ until there are no occurrences of 1's left.
\end{definition}


\begin{definition}\label{def:skeleton-frieze}
Let $\mathcal F$ be an infinite periodic frieze with quiddity sequence $q$. Then we define 
{\em the reduced frieze $\cF^s$ of} $\cF$ to be the infinite periodic frieze 
of the quiddity sequence $q^s$. 
\end{definition}

\begin{definition}
If $q$ is a quiddity sequence with $q=q^s$ {\bf and if} $q$ is different from $(2,2,\dots, 2)$, 
we will call $q$ a {\em skeletal} quiddity sequence. 
A {\em skeletal frieze} is a frieze obtained from a skeletal quiddity sequence.
\end{definition}

\begin{example} 
If $q=(4,1,2,5)$ then $q^s=(2,4)$. The frieze and its skeletal frieze are shown in Figures \ref{fig:frieze4125} and \ref{fig:frieze24} respectively. 

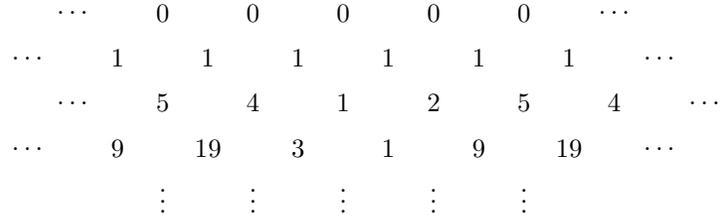
\begin{figure}[ht!]
\begin{tikzpicture}[scale=.6, inner sep=4pt]
\node (00) at (-6,6) {$\ldots$};
\node (01) at (-4,6) {0};
\node (02) at (-2,6) {0};
\node (ai+) at (0,6) {0};
\node (03) at (2,6) {0};
\node (04) at (4,6) {0};
\node (05) at (6,6) {$\ldots$};
\node (10) at (-7,5) {$\ldots$};
\node (11) at (-5,5) {1};
\node (12) at (-3,5) {1};
\node (12) at (-1,5) {1};
\node (12) at (1,5) {1};
\node (14) at (3,5) {1};
\node (15) at (5,5) {1};
\node (16) at (7,5) {$\ldots$};

\node (ai+) at (-6,4) {$\ldots$};
\node (ai) at (-4,4) {5};
\node (ai+) at (-2,4) {4};
\node (ai+) at (0,4) {1};
\node (aj-) at (2,4) {2};
\node (aj) at (4,4) {5};
\node (ai+) at (6,4) {4};
\node (ai+) at (8,4) {$\ldots$};

\node (10) at (-7,3) {$\ldots$};
\node (11) at (-5,3) {9};
\node (12) at (-3,3) {19};
\node (12) at (-1,3) {3};
\node (12) at (1,3) {1};
\node (14) at (3,3) {9};
\node (15) at (5,3) {19};
\node (16) at (7,3) {$\ldots$};

\node (ai) at (-4,2) {$\vdots$};
\node (ai+) at (-2,2) {$\vdots$};
\node (ai+) at (0,2) {$\vdots$};
\node (aj-) at (2,2) {$\vdots$};
\node (aj) at (4,2) {$\vdots$};

\end{tikzpicture}
\caption{Frieze $\cF$ for the quiddity sequence $q=(4,1,2,5)$.}
\label{fig:frieze4125}
\end{figure}

\begin{figure}[ht!]
\begin{tikzpicture}[scale=.6, inner sep=4pt]
\node (00) at (-6,6) {$\ldots$};
\node (01) at (-4,6) {0};
\node (02) at (-2,6) {0};
\node (ai+) at (0,6) {0};
\node (03) at (2,6) {0};
\node (04) at (4,6) {0};
\node (05) at (6,6) {$\ldots$};
\node (10) at (-7,5) {$\ldots$};
\node (11) at (-5,5) {1};
\node (12) at (-3,5) {1};
\node (12) at (-1,5) {1};
\node (12) at (1,5) {1};
\node (14) at (3,5) {1};
\node (15) at (5,5) {1};
\node (16) at (7,5) {$\ldots$};

\node (ai+) at (-6,4) {$\ldots$};
\node (ai) at (-4,4) {2};
\node (ai+) at (-2,4) {4};
\node (ai+) at (0,4) {2};
\node (aj-) at (2,4) {4};
\node (aj) at (4,4) {2};
\node (ai+) at (6,4) {$\ldots$};

\node (10) at (-7,3) {$\ldots$};
\node (11) at (-5,3) {7};
\node (12) at (-3,3) {7};
\node (12) at (-1,3) {7};
\node (12) at (1,3) {7};
\node (14) at (3,3) {7};
\node (15) at (5,3) {7};
\node (16) at (7,3) {$\ldots$};

\node (ai) at (-4,2) {$\vdots$};
\node (ai+) at (-2,2) {$\vdots$};
\node (ai+) at (0,2) {$\vdots$};
\node (aj-) at (2,2) {$\vdots$};
\node (aj) at (4,2) {$\vdots$};

\end{tikzpicture}
\caption{The skeletal frieze $\cF^s$ for the quiddity sequence $q^s= (2,4)$.}
\label{fig:frieze24}
\end{figure}
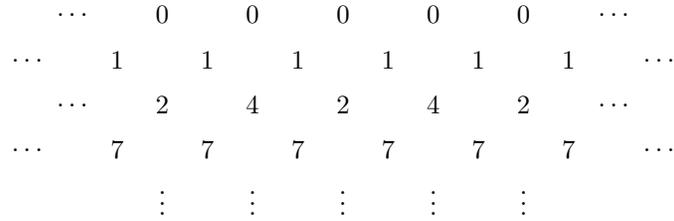
\end{example}

We end this subsection with the special frieze associated to the so called ``trivial quiddity sequence" $q=(2,2,\dots,2)$. This is an infinite periodic frieze which arises from a triangulation of a punctured disc 
as well as from a triangulation of an annulus using spiraling arcs. See Remark \ref{rem:arithmetic}. 
We will later exclude these types of friezes as they form a class of their own which is not 
relevant for our work, and have been well studied elsewhere.

\begin{example}\label{ex:all-2}
Consider the quiddity sequence $q=(2,2,\dots,2)$. It defines an infinite periodic frieze where 
the entries in the $k$-th non-trivial row are all equal to $k+1$. 
See Figure~\ref{fig:frieze-all-2}. 
The entries in each diagonal row grow linearly, this is an example of an {\em arithmetic infinite 
frieze} as studied by Tschabold~\cite{tsch}. We will call this the {\em trivial quiddity sequence}.

\begin{figure}[ht!]
\begin{tikzpicture}[scale=.5, inner sep=4pt]
\node (00) at (-6,6) {$\ldots$};
\node (01) at (-4,6) {0};
\node (02) at (-2,6) {0};
\node (ai+) at (0,6) {0};
\node (03) at (2,6) {0};
\node (04) at (4,6) {0};
\node (05) at (6,6) {$\ldots$};
\node (10) at (-7,5) {$\ldots$};
\node (11) at (-5,5) {1};
\node (12) at (-3,5) {1};
\node (12) at (-1,5) {1};
\node (12) at (1,5) {1};
\node (14) at (3,5) {1};
\node (15) at (5,5) {1};
\node (16) at (7,5) {$\ldots$};

\node (ai+) at (-6,4) {$\ldots$};
\node (ai) at (-4,4) {2};
\node (ai+) at (-2,4) {2};
\node (ai+) at (0,4) {2};
\node (aj-) at (2,4) {2};
\node (aj) at (4,4) {2};
\node (ai+) at (6,4) {$\ldots$};

\node (10) at (-7,3) {$\ldots$};
\node (11) at (-5,3) {3};
\node (12) at (-3,3) {3};
\node (12) at (-1,3) {3};
\node (12) at (1,3) {3};
\node (14) at (3,3) {3};
\node (15) at (5,3) {3};
\node (16) at (7,3) {$\ldots$};

\node (ai) at (-4,2) {$\vdots$};
\node (ai+) at (-2,2) {$\vdots$};
\node (ai+) at (0,2) {$\vdots$};
\node (aj-) at (2,2) {$\vdots$};
\node (aj) at (4,2) {$\vdots$};

\end{tikzpicture}
\caption{The infinite periodic frieze with quiddity sequence $q=(2,2,\dots,2)$.} 
\label{fig:frieze-all-2}
\end{figure}
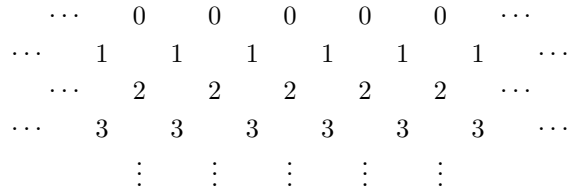
\end{example}

%
\subsection{Triangulations of surfaces and their associated quivers}

Let $S$ be a connected oriented surface with boundary. We denote by $M$, a finite set of 
marked points on the boundary. The pair $(S, M)$ is called a bordered surface with marked points 
if $S$ is non-empty and each connected component of the boundary of $S$ has at least one 
marked point, see~\cite[Section 2]{fst} for details. 

An {\em arc} $\gamma$ in $(S,M)$ is a class of curves equivalent up to homotopy in $S$ which start 
and end at the marked points in $M$. We require that there exists a curve $\gamma$ in $(S,M)$ for which 
the following 
conditions hold (we use the same notation for the arc and its representative): 
\begin{itemize}
\item $\gamma$ has no self-intersections,
\item $\gamma$ does not intersect the set $M$ and the boundary of the surface $S$, except at its endpoints,
\item $\gamma$ is not contractible to a subset of the boundary of $S$. 
\end{itemize}

Two arcs are compatible if they have representatives which do not intersect each other inside $S$. A maximal collection of such pairwise compatible arcs is called a {\em triangulation} of $(S,M)$. A triangulation of $(S,M)$ has two types of arcs. 
\begin{definition} \label{bridging arc} An arc that connects two marked 
points on the same boundary component is called a {\em peripheral arc}. An arc that connects 
two marked points on different boundary components is 
a {\em bridging arc}.
\end{definition}

We recall the definition of the quiver of a triangulation of a surface, as illustrated in 
Figure~\ref{fig:triang-quiver}.

\begin{definition}
Let $\mathcal T$ be a triangulation of a surface $(S,M)$. We associate a {\em quiver} $Q_\mathcal T$ to $\mathcal T$ as: 
\begin{description}
\item[Vertices] For every arc in $\mathcal T$ we have a vertex in $Q_\mathcal T$,
\item[Arrows] Suppose that \(i\) and \(j\) are arcs in the triangulation sharing one endpoint such that $i$ is a direct predescessor to $j$ with respect to anti-clockwise rotation at their shared endpoint. Then there is an arrow from $i$ to $j$ 
in $Q_\mathcal T$.
\end{description}
\end{definition}

\begin{figure}
\begin{tikzpicture}[scale=0.36]

    \draw[thick,fill=background] (5,0) arc (0:360:5); 
     \draw[thick,fill=white]  (1.5,0) arc (0:360:1.5); 


    \node[shape=circle,fill=black,scale=.65] (o1) at (30:5) {}; 
    \node[shape=circle,fill=black,scale=.65] (o2) at (270:5) {};
    \node[shape=circle,fill=black,scale=.65] (o3) at (150:5) {};
    \node[shape=circle,fill=black,scale=.65] (i1) at (90:1.5) {};
    \node[shape=circle,fill=black,scale=.65] (i2) at (270:1.5) {};

    \node[shape=circle,fill=arc1,scale=.65] (q1) at (41:3) {};  
    \node[shape=circle,fill=arc1,scale=.65] (q2) at (342:3) {};
    \node[shape=circle,fill=arc1,scale=.65] (q3) at (270:3) {};
    \node[shape=circle,fill=arc1,scale=.65] (q4) at (198:3) {};
    \node[shape=circle,fill=arc1,scale=.65] (q5) at (139:3) {};

   \draw[thick] (o2) to (i2); 
   \draw[thick][out=270,in=340] (o1) to (i2);
   \draw[thick][out=270,in=200] (o3) to (i2);
   \draw[thick] (o1) to (i1);
   \draw[thick] (o3) to (i1);
   
   \draw[thick, ->, arc1, shorten >= 2pt, shorten <= 2pt] (q1) to (q2); 
   \draw[thick, <-, arc1, shorten >= 2pt, shorten <= 2pt][out=270,in=340] (q2) to (q3);
   \draw[thick, <-, arc1, shorten >= 2pt, shorten <= 2pt][out=200,in=270] (q3) to (q4);
   \draw[thick, ->, arc1, shorten >= 2pt, shorten <= 2pt] (q4) to (q5);
   \draw[thick, <-, arc1, shorten >= 2pt, shorten <= 2pt] [out=45,in=145] (q5) to (q1);

\end{tikzpicture}
\qquad
\begin{tikzpicture}[scale=0.36]

    \draw[thick,fill=background] (5,0) arc (0:360:5); 
     \draw[thick,fill=white]  (1.5,0) arc (0:360:1.5); 
    
    \node[shape=circle,fill=black,scale=.65] (o1) at (30:5) {}; 
    \node[shape=circle,fill=black,scale=.65] (o2) at (270:5) {};
    \node[shape=circle,fill=black,scale=.65] (o3) at (150:5) {};
    \node[shape=circle,fill=black,scale=.65] (i1) at (90:1.5) {};
    \node[shape=circle,fill=black,scale=.65] (i2) at (270:1.5) {};

    \node[shape=circle,fill=arc1,scale=.65] (q1) at (41:3) {};  
    \node[shape=circle,fill=arc1,scale=.65] (q2) at (342:3) {};
    \node[shape=circle,fill=arc1,scale=.65] (q3) at (270:4) {};
    \node[shape=circle,fill=arc1,scale=.65] (q4) at (198:3) {};
    \node[shape=circle,fill=arc1,scale=.65] (q5) at (139:3) {};

 \draw [thick] (o1) to [out=280,in=0] (0,-4)
        to [out=180,in=260] (o3) ; 
   \draw[thick][out=270,in=340] (o1) to (i2);
   \draw[thick][out=270,in=200] (o3) to (i2);
   \draw[thick] (o1) to (i1);
   \draw[thick] (o3) to (i1);
   
   \draw[thick, ->, arc1, shorten >= 2pt, shorten <= 2pt] (q1) to (q2); 
   \draw[thick, ->, arc1, shorten >= 2pt, shorten <= 2pt][out=270,in=20] (q2) to (q3);
   \draw[thick, ->, arc1, shorten >= 2pt, shorten <= 2pt][out=160,in=270] (q3) to (q4);
   \draw[thick, ->, arc1, shorten >= 2pt, shorten <= 2pt] (q4) to (q5);
   \draw[thick, <-, arc1, shorten >= 2pt, shorten <= 2pt] [out=45,in=145] (q5) to (q1);
   \draw[thick, ->, arc1, shorten >= 2pt, shorten <= 2pt] [out=300,in=240](q4) to (q2);

\end{tikzpicture}
\caption{Examples of quivers $Q_\mathcal T$ and $Q_{\mathcal T'}$ from two triangulations of $C_{3,2}$.}
\label{fig:triang-quiver}
\end{figure}
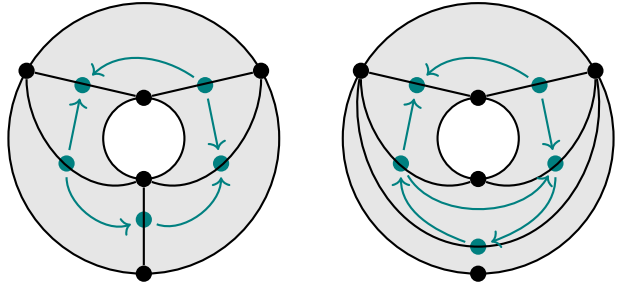

\begin{remark}
We observe that (isotopy classes of) infinite curves which start at one marked point and spiral 
around infinitely many times have also been used in triangulations of surfaces, as introduced in~\cite[Section 3.2]{bbm}. 
Such arcs are called {\em spiraling arcs}. See Figure~\ref{fig:all-twos} for examples. 
However such arcs do not appear in the results of this paper.
\end{remark}


\subsection{Triangulations and friezes}\label{sec:tria-friezes} 

Now let us recall the correspondence between triangulations of regular polygons and finite friezes \cite[Section 3]{CC1,CC2}. 
First, note that every finite frieze of order $n$ has period $(n+3)$ \cite[Question 26]{CC1,CC2}. In fact, every finite frieze of order $n$ comes from a triangulation of a regular $(n+3)$-gon \cite[Question 28 and 29]{CC1,CC2}. The correspondence can be described as follows: let $\cF$ be a frieze of period $(n+3)$ with quiddity sequence $q$. Then the $(n+3)$ entries of $q$ count the number of triangles at the vertices in a triangulation of an $(n+3)$-gon. 

Every triangulation of an annulus gives rise to an infinite periodic frieze and every infinite periodic 
frieze can be realized by a triangulation of an annulus (\cite[Theorem 3.7, Theorem 4.6]{bpt}). For this reason, the surface $S$ we consider will always be an annulus. We denote by $C_{m,n}$ an annulus with $m$ marked points on one boundary component $B_1$ and $n$ marked points on the other boundary component $B_2$. We will refer to them as inner and outer boundary components of annulus if we use a geometric presentation of the annulus in figures. We will label the marked points on $B_1$ anti-clockwise around the outer boundary and the marked points on $B_2$ anti-clockwise around the inner boundary.

\begin{remark}\label{rem:arithmetic}
It is worth pointing out that there are infinite periodic friezes which can also be given 
by triangulations of a punctured disk. These are the arithmetic infinite friezes 
from Examples~\ref{ex:all-2} and~\ref{ex:spiraling}. 
\end{remark}



%
\subsection{Triangulations of annuli and friezes}\label{sec:tria-frieze}

We recall from \cite[Theorem 4.6]{bpt} that for every infinite periodic frieze 
we can construct a triangulation of an annulus such that the number of triangles at the marked 
points on one boundary give the quiddity sequence of the frieze (as in 
Definition~\ref{def:triang-frieze} below). 
While finite friezes are in bijection with triangulations of polygons, infinite periodic friezes arise 
from triangulations of annuli, but not in a unique way. 
In fact, each infinite periodic frieze gives rise to a family of triangulations of an annulus. If we restrict to 
skeletal friezes, the triangulation is essentially unique as we will see in Proposition~\ref{prop:frieze-tria}. 

As a consequence we show in Theorem~\ref{thm:unique-frieze} that if $\mathcal F_1$ is skeletal, 
it determines another skeletal frieze $\mathcal F_2$ essentially uniquely such that the pair arises from a triangulation of an annulus. We start with a triangulation of an annulus and associate two infinite periodic friezes to it. 

\begin{definition}\label{def:triang-frieze}
Let $\mathcal T$ be a triangulation on an annulus $C_{m,n}$. For each of the two boundaries $B_1$ and $B_2$ define the {\em quiddity sequences} $q_1=(a_1,\ldots,a_m)$ and $q_2=(b_1,\ldots,b_n)$ as the number of triangles at marked points on $B_1$ and $B_2$ respectively. The friezes corresponding to $q_1$ and $q_2$ will be denoted by $\mathcal F_1$ and $\mathcal F_2$ respectively.
\end{definition}

Note that we will not distinguish between the two annuli $C_{m,n}$ and $C_{n,m}$.  Let $\mathcal T$ be a triangulation of $C_{m,n}$ and $\mathcal T^{\prime}$ be the triangulation of $C_{n,m}$ obtained by exchanging the boundaries of $C_{m,n}$, then $Q_{\mathcal T}$ and $Q_{\mathcal T^{\prime}}$ are isomorphic quivers.

\begin{example} Consider the triangulation $\mathcal T$ of an annulus $C_{3,2}$ shown in Figure~\ref{TrngAnnulus}, the corresponding quiddity sequences are $q_1=(2,3,3)$ and $q_2=(3,4)$. The frieze $\cF_1$ corresponding to the boundary $B_1$ 
is defined to be the frieze which
has quiddity sequence $q_1=(2,3,3)$ and therefore is as follows: 
\begin{center}
\noindent $\ldots$ \qquad 0 \qquad 0 \qquad 0 \qquad 0 \qquad 0 \qquad 0 \qquad $\ldots$\\
\hspace{1cm} $\ldots$ \qquad 1 \qquad 1 \qquad 1 \qquad 1 \qquad 1 \qquad 1 \qquad $\ldots$\\
$\ldots$ \qquad 3 \qquad 3 \qquad 2 \qquad 3 \qquad 3 \qquad 2 \qquad $\ldots$\\
\hspace{1cm} $\ldots$ \qquad 8 \qquad 5 \qquad 5 \qquad 8 \qquad 5\qquad 5 \qquad $\ldots$\\
\hspace{.8cm} $\ddots$ \qquad \hspace{.8cm} \qquad $\ddots$\\ 
\end{center}

\noindent The frieze $\cF_2$ corresponding to the boundary $B_2$ with quiddity sequence $q_2=(3,4)$ is as follows: 

\begin{center}
\noindent $\ldots$ \qquad 0 \qquad 0 \qquad 0 \qquad 0 \qquad $\ldots$\\
\hspace{1cm} $\ldots$ \qquad 1 \qquad 1 \qquad 1 \qquad 1 \qquad $\ldots$\\
$\ldots$ \qquad 3 \qquad 4 \qquad 3 \qquad 4 \qquad $\ldots$\\
\hspace{1cm} $\ldots$ \qquad 11 \qquad 11 \qquad 11 \qquad 11 \qquad $\ldots$\\
\hspace{1.2cm} $\ddots$ \qquad \hspace{.4cm} \qquad $\ddots$
\end{center}

\begin{center}

\begin{figure}[ht]
\begin{tikzpicture}[scale=0.35]

    \draw[thick,fill=background] (5,0) arc (0:360:5); 
    \draw[thick,fill=white]  (1.5,0) arc (0:360:1.5); 
     
    \node[shape=circle,fill=black,scale=.65] (o1) at (30:5) {}; 
    \node[shape=circle,fill=black,scale=.65] (o2) at (270:5) {};
    \node[shape=circle,fill=black,scale=.65] (o3) at (150:5) {};
    \node[shape=circle,fill=black,scale=.65] (i1) at (90:1.5) {};
    \node[shape=circle,fill=black,scale=.65] (i2) at (270:1.5) {};
    
    \node at (30:5.8) {$\tiny\circled{3}$}; 
    \node at (270:5.8) {$\tiny\circled{2}$};
    \node at (150:5.8) {$\tiny\circled{3}$};
    \node  at (90:0.8) {$\tiny\circled{3}$};
    \node at (270:0.8) {$\tiny\circled{4}$};
 
    \node at (90:5.5) {$B_1$};
    \node at (0:2) {$B_2$};
     
   \draw[thick] (o2) to (i2); 
   \draw[thick][out=270,in=340] (o1) to (i2);
   \draw[thick][out=270,in=200] (o3) to (i2);
   \draw[thick] (o1) to (i1);
   \draw[thick] (o3) to (i1);
\end{tikzpicture}
\caption{A triangulation of an annulus $C_{3,2}$ with number of triangles at each marked point.}
\label{TrngAnnulus}
\end{figure}
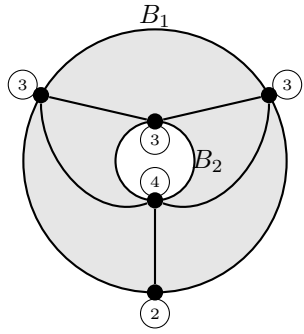
\end{center}
\end{example}

\begin{example}\label{ex:spiraling}
The triangulation of an annulus giving rise to the trivial quiddity sequence $(2,2,2,2)$ is given 
by arcs spiraling around a non-contractible curve in the annulus as shown in Figure~\ref{fig:all-twos}.

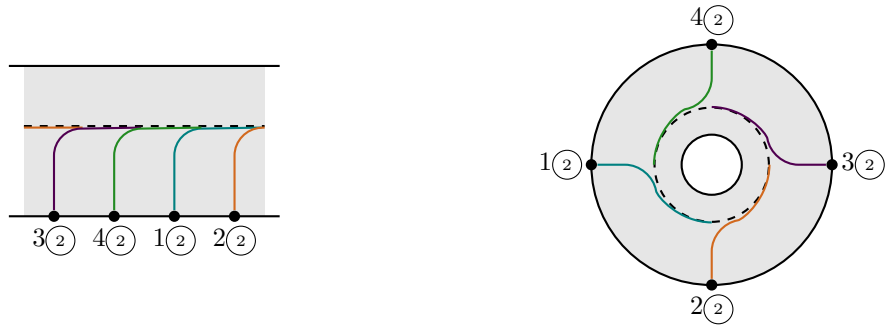
\begin{figure}[ht]
\begin{subfigure}{0.45\linewidth}
\centering
\begin{tikzpicture}[scale=.4]
\fill[background] (0,0) rectangle (8,5) ; 
\draw[thick] (-.5,0) -- (8.5,0); 
\draw[thick] (-.5,5) -- (8.5,5); 
\node[inner sep=1.5pt, fill, circle] (3) at (1, 0) {};
\node[inner sep=1.5pt, fill, circle] (4) at (3, 0) {};
\node[inner sep=1.5pt, fill, circle] (1) at (5, 0) {};
\node[inner sep=1.5pt, fill, circle] (2) at (7, 0) {};
\node[below] (1txt) at (1) {$1\tiny\circled{2}$};
\node[below] (2txt) at (2) {$2\tiny\circled{2}$};
\node[below] (3txt) at (3) {$3\tiny\circled{2}$};
\node[below] (4txt) at (4) {$4\tiny\circled{2}$};
\draw[thick, dashed] (0, 3)--(8, 3);
\begin{scope}[thick, rounded corners=10pt]
\draw[color=arc1] (1) -- ++(0,2.9) -- ++ (2.9,0.05); 
\draw[color=arc2] (2) -- ++(0,2.9) -- ++ (1,0.05); 
\draw[color=arc2] (0,2.95) -- ++(1.95,0); 
\draw[color=arc3] (3) -- ++(0,2.9) -- ++ (2.9,0.05); 
\draw[color=arc4] (4) -- ++(0,2.9) -- ++ (2.9,0.05); 
\end{scope}
\end{tikzpicture}
\end{subfigure}
 \begin{subfigure}{0.45\linewidth}
 \centering
\begin{tikzpicture}[scale=.4]
\draw[thick,fill=background, radius=4cm] (0,0) circle; 
\draw[thick,fill=white!10, radius=1cm] (0,0) circle; 
\draw[thick, radius=1.9cm, dashed] (0,0) circle;

\node[inner sep=1.5pt, fill, circle] (3) at (0:4) {};
\node[inner sep=1.5pt, fill, circle] (4) at (90:4) {};
\node[inner sep=1.5pt, fill, circle] (1) at (180:4) {};
\node[inner sep=1.5pt, fill, circle] (2) at (270:4) {};
\node[left] (1txt) at (1) {$1\tiny\circled{2}$};
\node[below] (2txt) at (2) {$2\tiny\circled{2}$};
\node[right] (3txt) at (3) {$3\tiny\circled{2}$};
\node[above] (4txt) at (4) {$4\tiny\circled{2}$};

\begin{scope}[thick, rounded corners=10pt]
\draw[color=arc1] (1) -- ++(0:2) .. controls (210:2.1) and (240:2.2) .. (270:1.92); 
\draw[color=arc2] (2) -- ++(90:2) .. controls (300:2.1) and (330:2.2) .. (360:1.92); 
\draw[color=arc3] (3) -- ++(180:2) .. controls (30:2.1) and (60:2.2) .. (90:1.92); 
\draw[color=arc4] (4) -- ++(270:2) .. controls (120:2.1) and (150:2.2) .. (180:1.92); 
\end{scope}

\end{tikzpicture}
\end{subfigure}
\caption{The asymptotic triangulation of an annulus corresponding to \(q=(2,2,2,2)\). 
The figure on the left shows a fundamental domain for the surface, 
where 
the two vertical ends of the shaded region are identified. It is often convenient to draw arcs 
in this way.}
\label{fig:all-twos}
\end{figure}
\end{example}

\begin{remark}\label{rem:no-arithm}
We will from now on always exclude asymptotic triangulations, the trivial 
quiddity sequences $(2,2,\dots,2)$ and their infinite periodic friezes (as in Figure~\ref{fig:frieze-all-2}). 
\end{remark}

%
\section{Pairs of friezes}\label{sec:pairs}
%
In this section we characterize the relation between the two infinite periodic friezes associated to a triangulation.
\subsection{Skeletal triangulations} 
In the section \ref{sec:skeletal_friezes}, we studied skeletal friezes.  Similarly we can define skeletal triangulations. To a triangulation $\mathcal T$ of an annulus with marked points, we associate a skeletal triangulation $\mathcal T^s$ of a new annulus with marked points. We show that the essential properties that 
we are concerned about are preserved when going from a triangulation/frieze
to the corresponding skeletal triangulation/frieze. 
Since skeletal triangulations/friezes 
are easier to deal with, we will work with them. 
Recall that we only allow finite arcs in triangulations.

\begin{definition}
A {\em flip} of an arc in a triangulation $\cT$ is the replacement of the arc by the unique other 
arc in the quadrilateral formed by the two triangles incident with the arc. 
\end{definition}
An example of a flip is shown in Figure~\ref{fig:triang-quiver}. 
The vertical arc in the annulus on the left hand is 
flipped to a peripheral arc in the right hand.

\begin{definition}
A peripheral arc in a triangulation $\cT$ is a {\em bounding arc} (for $\cT$) if its flip 
is a bridging arc as defined in Definition \ref{bridging arc}.
\end{definition}

\begin{remark}\label{partition}
Every bounding arc of a triangulation splits the annulus into two parts. 
One part is a triangulated polygon and the other is a triangulated annulus. 
\end{remark}

\begin{lemma}\label{lm:bounding-exist}
Every triangulation with a peripheral arc has a bounding arc. 
\end{lemma}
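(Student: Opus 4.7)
The plan is to pick a peripheral arc $\gamma \in \mathcal{T}$ whose cut-off disk $D_\gamma$ is maximal under inclusion (such $\gamma$ exists since $\mathcal{T}$ is finite), and prove that $\gamma$ itself is a bounding arc.

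Without loss of generality the endpoints $p,q$ of $\gamma$ lie on $B_1$, and $\alpha \subset B_1$ is the boundary arc of $D_\gamma$. Let $T$ be the triangle of $\mathcal{T}$ adjacent to $\gamma$ on the $A_\gamma$-side, with third vertex $s$. The triangle on the $D_\gamma$-side has its third vertex $r$ inside the polygon $D_\gamma$, so $r \in B_1$. Therefore, once we know $s \in B_2$, the flip of $\gamma$ is the arc $rs$ running from $B_1$ to $B_2$ -- a bridging arc -- and we are done.

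I would prove $s \in B_2$ by contradiction: assume $s \in B_1$. Then $ps$ and $qs$ are each either arcs of $\mathcal{T}$ or boundary segments of $B_1$. If both were boundary segments, $\partial T$ would be homotopic to $B_1$, which is not null-homotopic in the annulus -- contradicting that $T$ bounds a disk. So at least one, say $ps$, is a peripheral arc, cutting off a disk $D_{ps}$. First subcase: $qs$ is a boundary segment. Then $D_\gamma \cup T$ is a disk bounded by $ps$ and the arc of $B_1$ from $p$ through $q$ to $s$, so $D_{ps} = D_\gamma \cup T \supsetneq D_\gamma$, contradicting the maximality of $D_\gamma$. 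Second subcase: $qs$ is also a peripheral arc, with disk $D_{qs}$. If $D_{ps}$ contains $q$, then as above $D_{ps} \supsetneq D_\gamma$; by symmetry, if $D_{qs}$ contains $p$ then $D_{qs} \supsetneq D_\gamma$. In the remaining configuration -- both $D_{ps}$ and $D_{qs}$ sit on the side of their arc facing away from $T$ -- I consider the union $D_\gamma \cup T \cup D_{ps}$, which is a disk with boundary $qs$ together with the arc of $B_1$ from $s$ through $p$ to $q$. This forces $qs$'s disk side to actually contain $p$, contradicting the configuration assumption. In every case we reach a contradiction, so $s \in B_2$.

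The chief technical hurdle is keeping straight which of the two arcs of $B_1$ between two marked points combines with a peripheral arc to bound its disk. Since the disk $T \cup D_\gamma$ touches $B_1$ both along $\alpha$ and at the isolated point $s$, its complement can be pinched, so a direct geometric picture in the annulus is somewhat subtle; lifting the configuration to the universal cover of the annulus (an infinite horizontal strip) makes all the relevant homotopy calculations elementary.
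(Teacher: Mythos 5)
Your proof is correct and takes essentially the same route as the paper: pick an outermost peripheral arc (the paper chooses one with no peripheral arc above it, you choose one whose cut-off disk is maximal under inclusion, which amounts to the same thing) and show its flip is bridging because the triangle on the annulus side of that arc must have its third vertex on the other boundary component. Your case analysis excluding $s\in B_1$ simply supplies details that the paper's proof leaves implicit.
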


\begin{proof}
Assume that $\cT$ has a peripheral arc $\gamma$. Let $\gamma'$ be maximal and peripheral 
above $\gamma$ ($\gamma$ lies between $\gamma'$ and the boundary it is attached to and there is no other peripheral arc 
above $\gamma'$). Denote the endpoints of 
$\gamma$ by $P_1$ and $P_2$. Then $\gamma'$ is incident with two 
triangles of $\cT$: one consists only of peripheral arcs, say with vertices $P_1,P_2,P_3$ along the boundary and the other 
one has vertices $P_1,Q,P_2$ with 
$Q$ on the other boundary. Flipping $\gamma$ results in the arc connecting $P_3$ with $Q$, a briding arc. 
Therefore, $\gamma$ is bounding. 
\end{proof}

\begin{definition}\label{s_T}
Let $\cT$ be a triangulation of an annulus. The {\em skeletal triangulation 
$\cT^s$ of $\cT$} is obtained by cutting along all bounding arcs and removing 
the triangulated polygon attached with them.
\end{definition}

\begin{example}\label{ex:skeleton-tr}
In Figure \ref{fig:skeleton-triangulation} we give an example of a triangulation $\mathcal T$ of an annulus and the corresponding skeletal triangulation $\mathcal T^s$.

\begin{figure}[ht!]
 \begin{subfigure}[b]{0.35\linewidth}
 \centering

	\begin{tikzpicture}[scale=0.35]
    \draw[thick,fill=background] (5,0) arc (0:360:5); 
    \draw[thick,fill=white]  (1.5,0) arc (0:360:1.5); 
     
    \node[shape=circle,fill=black,scale=.65] (o1) at (30:5) {};
    \node[shape=circle,fill=black,scale=.65] (o2) at (270:5) {};
    \node[shape=circle,fill=black,scale=.65] (o3) at (150:5) {};
    \node[shape=circle,fill=black,scale=.65] (i1) at (90:1.5) {};
    \node[shape=circle,fill=black,scale=.65] (i2) at (270:1.5) {};
 
   \draw[thick,arc1][out=270,in=340] (o1) to (i2);
   \draw[thick,arc1][out=270,in=200] (o3) to (i2);
   \draw[thick,arc1] (o1) to (i1);
   \draw[thick,arc1] (o3) to (i1);
   \draw [thick,arc2] (o1) to [out=280,in=0] (0,-4)
        to [out=180,in=260] (o3) ;
	\end{tikzpicture}
  \end{subfigure}
  \begin{subfigure}[b]{0.4\linewidth}
  \centering

\begin{tikzpicture}[scale=0.35]

    \draw[thick,fill=background] (5,0) arc (0:360:5); 
    \draw[thick,fill=white]  (1.5,0) arc (0:360:1.5); 
 
    \node[shape=circle,fill=black,scale=.65] (o1) at (30:5) {};
    \node[shape=circle,fill=black,scale=.65] (o3) at (150:5) {};
    \node[shape=circle,fill=black,scale=.65] (i1) at (90:1.5) {};
    \node[shape=circle,fill=black,scale=.65] (i2) at (270:1.5) {};

   \draw[thick,arc1][out=270,in=340] (o1) to (i2);
   \draw[thick,arc1][out=270,in=200] (o3) to (i2);
   \draw[thick,arc1] (o1) to (i1);
   \draw[thick,arc1] (o3) to (i1);


\end{tikzpicture}
\end{subfigure} 
\caption{A triangulation of an annulus $C_{3,2}$ and the corresponding skeletal triangulation of $C_{2,2}$.}
\label{fig:skeleton-triangulation}
\end{figure}
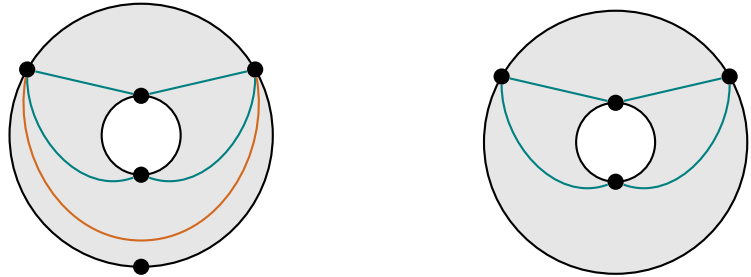
\end{example}

\begin{lemma}\label{lm:skeleton-tri}
Let $\cT$ be a triangulation of an annulus. Then:
\begin{enumerate}
\item The process of constructing the skeletal triangulation $\cT^s$ is well defined.
\item The skeletal triangulation $\cT^s$ has only bridging arcs.
\end{enumerate}
\end{lemma}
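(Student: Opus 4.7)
The plan treats the two statements separately. For (1), my approach is to show that the triangulated polygons attached to distinct bounding arcs of $\cT$ form a pairwise disjoint family; since the set of bounding arcs is intrinsic to $\cT$, cutting along all of them and removing their attached polygons simultaneously is then an unambiguous operation producing $\cT^s$. To establish disjointness I distinguish two cases. If $\gamma_1$ and $\gamma_2$ are bounding arcs attached to different boundary components, their attached polygons are topological disks sitting against opposite boundaries, with boundary curves built from the $\gamma_i$'s and boundary segments; a quick check shows these boundary curves cannot meet (the $\gamma_i$'s are compatible and $B_1$, $B_2$ are disjoint), so the disks are disjoint. If instead $\gamma_1$ and $\gamma_2$ attach to the same boundary, compatibility forces the two polygons to be either disjoint or nested; a nested configuration is ruled out because if the polygon of $\gamma_1$ contained $\gamma_2$, both triangles of $\cT$ adjacent to $\gamma_2$ would lie inside a triangulated disk, so flipping $\gamma_2$ would yield another arc in this disk, which is peripheral rather than bridging, contradicting $\gamma_2$ being bounding.

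For (2) I argue by contradiction. Assume $\cT^s$ contains a peripheral arc. Since $\cT^s$ is itself a triangulation of a (smaller) annulus, Lemma~\ref{lm:bounding-exist} hands us a bounding arc $\beta$ of $\cT^s$. The goal is to promote $\beta$ to a bounding arc of $\cT$, which contradicts the definition of $\cT^s$ since every bounding arc of $\cT$ was removed when forming $\cT^s$.

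The principal obstacle is verifying that the local picture around $\beta$ in $\cT^s$ faithfully reproduces the local picture in $\cT$. Since every triangle of $\cT^s$ is a triangle of $\cT$, it suffices to show that the two triangles of $\cT^s$ incident to $\beta$ are also the two triangles of $\cT$ incident to $\beta$. If this failed, $\beta$ would be adjacent in $\cT$ to a removed triangle, so $\beta$ would either lie on the boundary of a removed polygon (forcing $\beta$ itself to be a bounding arc of $\cT$, and hence removed) or strictly inside such a polygon (and hence also removed); both options contradict $\beta\in\cT^s$. Once this match is in hand, the ``ear plus crossing triangle'' configuration around $\beta$ transfers verbatim to $\cT$, and since the boundary components of $\cT^s$ correspond canonically to those of $\cT$, the flip of $\beta$ in $\cT$ remains a bridging arc. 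Therefore $\beta$ is bounding in $\cT$, giving the desired contradiction and finishing the proof.
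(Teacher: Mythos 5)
Your proposal is correct, but it takes a genuinely different route from the paper, most visibly in part (2). The paper's proof of (1) is a one-line observation: bounding arcs meet at most in endpoints and no new bounding arcs are created by cutting, so the order of removal is irrelevant; your disjointness analysis of the attached polygons (separating the two-boundary case from the same-boundary case and excluding nesting by noting that a bounding arc inside a triangulated polygon would flip to a peripheral arc) makes explicit exactly the facts the paper leaves implicit, which is a worthwhile addition --- the only cosmetic caveat is that ``pairwise disjoint'' should be read as ``disjoint except possibly at shared marked points,'' since two bounding arcs may share an endpoint. For (2), the paper argues directly: every non-bounding peripheral arc of $\cT$ lies inside the polygon cut off by some bounding arc (the maximal peripheral arc above it, which is bounding by the argument of Lemma~\ref{lm:bounding-exist}), so all peripheral arcs disappear in $\cT^s$. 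You instead argue by contradiction, applying Lemma~\ref{lm:bounding-exist} to $\cT^s$ itself and promoting the resulting bounding arc $\beta$ of $\cT^s$ to a bounding arc of $\cT$; the key verification --- that the two triangles of $\cT^s$ incident to $\beta$ are already triangles of $\cT$, since otherwise $\beta$ would coincide with a cut-off bounding arc or lie inside a removed polygon, and that the flip therefore stays bridging because the boundary components and their marked points of $\cT^s$ sit on those of $\cT$ --- is carried out correctly. The paper's route is shorter and works entirely inside $\cT$; yours is slightly longer but has the merit of reusing Lemma~\ref{lm:bounding-exist} as a black box and of simultaneously showing that no new bounding arcs can survive in $\cT^s$, a point the paper asserts without proof in part (1).
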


\begin{proof} (1) This is clear since bounding arcs do not intersect except possibly at end points, 
and no new bounding arcs are created after cutting along existing bounding arcs. 
Thus the order of removing them does not matter. 

(2) Every non-bounding peripheral arc lies in a polygon given by some bounding arc and the 
corresponding boundary of the annulus between the two endpoints of the bounding arc. In particular, every 
peripheral arc gets removed when going from $\cT$ to $\cT^s$. 
\end{proof}

Note that the skeletal triangulation $\cT^s$ of a triangulation 
$\cT$ of an annulus, is actually a triangulation
of a different annulus (as in the above Figure \ref {fig:skeleton-triangulation}). 
\begin{lemma}
If a triangulation of $C_{m,n}$ has 
$k$ peripheral arcs, then its skeletal triangulation is of an annulus 
$C_{m-k_1, n-k_2}$ where $k_1$ is the number of peripheral arcs on the outer boundary 
and $k_2$ the number of peripheral arcs on the inner boundary (in particular, $k_1 + k_2 =k$). 
\end{lemma}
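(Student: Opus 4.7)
The plan is to induct on the total number $k$ of peripheral arcs in $\cT$. In the base case $k=0$, the triangulation already has only bridging arcs, so there is nothing to remove and $\cT^s=\cT$ sits on $C_{m,n}$ with $k_1=k_2=0$. For the inductive step, I will exhibit a single \emph{ear removal} that simultaneously decreases $k$ by one, removes exactly one marked point from the boundary carrying that ear, and leaves the skeletal triangulation $\cT^s$ unchanged. Iterating $k_1$ times on the outer boundary and $k_2$ times on the inner will then place $\cT^s$ on $C_{m-k_1,\,n-k_2}$.

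To carry out the inductive step I first need to show that whenever $k\ge 1$ the triangulation $\cT$ contains an ear. Fix a boundary carrying at least one peripheral arc, say $B_1$, and among the peripheral arcs on $B_1$ choose one $\gamma$ which is \emph{innermost}: the disk region $D_\gamma$ bounded by $\gamma$ and the arc of $B_1$ between its endpoints on the side away from the inner boundary contains no other peripheral arc of $\cT$. If $D_\gamma$ had more than three vertices, then the restriction of $\cT$ to $D_\gamma$ would use at least one diagonal; such a diagonal would be a peripheral arc on $B_1$ strictly inside $D_\gamma$, contradicting the choice of $\gamma$. Hence $D_\gamma$ is a triangle, i.e.\ $\gamma$ is the inner side of an ear.

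Let $\cT'$ denote the triangulation obtained from $\cT$ by deleting this ear, removing the apex marked point together with $\gamma$. Then $\cT'$ is a triangulation of $C_{m-1,n}$ (assuming the ear is on $B_1$) with $k-1$ peripheral arcs, distributed as $k_1-1$ on the outer boundary and $k_2$ on the inner. The crucial step is to verify $(\cT')^s=\cT^s$: since by Lemma~\ref{lm:skeleton-tri}(2) the skeletal construction removes \emph{all} peripheral arcs, and since by Lemma~\ref{lm:skeleton-tri}(1) the order of these removals is immaterial, while Remark~\ref{rem:reduce-triangles}(b) identifies ear deletion as the geometric counterpart of reduction at a $1$ in the quiddity sequence, we may perform this ear removal first without affecting the final result. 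Applying the inductive hypothesis to $\cT'$ places $(\cT')^s=\cT^s$ on $C_{(m-1)-(k_1-1),\,n-k_2}=C_{m-k_1,\,n-k_2}$, as required.

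The main obstacle is the clean justification that ear removal commutes with the skeletal construction. This is handled by a local analysis: the apex of the ear lies inside the region eventually carved out by some bounding arc of $\cT$ (or else the ear's arc itself is a bounding arc), and removing the ear first either splits that bounding arc's polygon into a triangle and a smaller triangulated polygon or leaves the ambient bounding arc unchanged; in either case the union of regions ultimately cut off coincides with that cut off by $\cT$, so $(\cT')^s=\cT^s$. All remaining work is bookkeeping of marked points between the two boundary components.
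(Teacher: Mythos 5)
Your proposal is correct and follows essentially the same route as the paper: the paper's one-sentence proof is precisely the observation that deleting an ear (a triangle with two boundary segments) removes exactly one peripheral arc and one marked point on that boundary, which is the engine of your induction. You merely add the scaffolding the paper leaves implicit --- the innermost-arc argument for the existence of an ear and the check that $(\cT')^s=\cT^s$ (your closing local analysis is sound, since the only arc on the ear's boundary is its peripheral arc, so flips and hence bounding arcs of all other arcs are unchanged) --- so there is no substantive difference in approach.
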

\begin{proof}
The proof follows since removing one peripheral arc of a triangle with two boundary segments 
corresponds to removing one marked point on that boundary. 
\end{proof}

By Lemma~\ref{lm:skeleton-tri} (2),
all arcs of a skeletal triangulation are bridging. We have the following: 
\begin{lemma}\label{lm:skeletal-property-1} 
Let $\cT$ be a triangulation of an annulus. Then,
$\cT=\cT^s$ if and only if the quiver $Q_{\cT}$ of $\cT$ is a non-oriented cycle. 
\end{lemma}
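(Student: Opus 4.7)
I would begin by reformulating the statement: by Lemma~\ref{lm:skeleton-tri}(2) together with Lemma~\ref{lm:bounding-exist}, the equality $\cT=\cT^s$ is equivalent to saying that $\cT$ has no peripheral arc, i.e.\ every arc of $\cT$ is bridging. (A peripheral arc forces a bounding arc and a nontrivial skeletonization; conversely, the absence of peripheral arcs leaves nothing to cut along.) So it suffices to prove that the underlying graph of $Q_\cT$ is a cycle if and only if every arc of $\cT$ is bridging.

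For the forward direction, suppose every arc is bridging. From $\chi(\text{annulus})=0$ one extracts the standard counts $N=m+n$ arcs and $T=N$ triangles. I would then classify triangles by the number of boundary segments on their boundary. A triangle with two boundary segments is an ear and contains a peripheral arc, contradiction; a triangle with zero boundary segments would need its three vertices to be properly $2$-coloured by the two boundary components, impossible since $K_3$ is not bipartite. So every triangle has exactly one boundary segment and two bridging arcs meeting at the apex vertex opposite that segment, and being consecutive there the two arcs contribute exactly one arrow of $Q_\cT$. Consequently $Q_\cT$ has $N$ arrows, and since each arc lies in exactly two such triangles and receives one arrow from each, every vertex has degree $2$. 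Connectedness of $Q_\cT$ follows from connectedness of the surface through the dual graph of triangles together with the observation that two arcs belonging to a common triangle are adjacent in $Q_\cT$. Hence $Q_\cT$ is a $2$-regular connected graph on $N$ vertices, i.e.\ a non-oriented cycle.

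For the converse I would argue the contrapositive. If $\cT\neq\cT^s$ then $\cT$ has a peripheral arc and hence, by Lemma~\ref{lm:bounding-exist}, at least one ear. Let $T_i$ denote the number of triangles with exactly $i$ arcs as sides. Euler gives $T_1+T_2+T_3=N$ and the boundary-segment count (each boundary segment lies in a unique triangle) gives $2T_1+T_2=m+n=N$; subtracting these yields $T_1=T_3$. A direct bookkeeping of arrow contributions per triangle—$0$ from each ear, $1$ from each one-boundary triangle (at its apex), $3$ from each zero-boundary triangle (one at each of its three vertices)—shows that $Q_\cT$ has $T_2+3T_3=N+T_1$ arrows. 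Since $T_1>0$, this strictly exceeds $N$, so $Q_\cT$ has more arrows than vertices and its underlying graph cannot be a cycle.

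The step I expect to be most delicate is the connectedness of $Q_\cT$ in the forward direction; $2$-regularity and the arrow count follow mechanically from the triangle classification, but connectedness has to be deduced from connectedness of the annulus by routing through the dual graph of triangles and using the fact that the two arcs in any shared triangle are already joined in $Q_\cT$.
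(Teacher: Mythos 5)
Your forward direction stops short of what the lemma actually asserts. In this paper a \emph{non-oriented cycle} is a quiver whose underlying graph is a cycle \emph{and} which is not cyclically oriented — equivalently, it has at least one source and one sink; this is exactly how the notion is used in Section~\ref{sec:quiv-quid} and in Definition~\ref{def:quiv-quid}. Your argument (every triangle has one boundary segment and two bridging arcs, hence contributes one arrow; every arc lies in two triangles, hence has degree $2$; connectedness via the dual graph) only shows that the \emph{underlying graph} of $Q_\cT$ is a cycle, and is silent about orientation. Ruling out the cyclically oriented cycle is a genuine geometric point, not a formality: the oriented cycle does arise from asymptotic (spiraling) triangulations, so one must use that all arcs here are finite bridging arcs. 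The paper settles this with its fan argument, showing $Q_\cT$ has at least one sink and one source; alternatively, since each boundary component carries at least one boundary segment, there is at least one triangle whose apex lies on $B_1$ and at least one whose apex lies on $B_2$, and the arrows created at apexes on the two boundaries wind in opposite senses around the cycle, forcing a source and a sink. Some argument of this kind must be added; as written the forward implication is incomplete.

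Two further remarks on the converse, which you argue quite differently from the paper. First, a miscitation: Lemma~\ref{lm:bounding-exist} produces a \emph{bounding} arc, i.e.\ a maximal peripheral arc, which in general is not a side of an ear (it cuts off a whole triangulated polygon), so it does not directly give $T_1>0$. The fact you need is true and easy to repair — e.g.\ take a peripheral arc cutting off an inclusion-minimal disk; that disk contains no arcs, hence is a single triangle with two boundary segments — but your one-line justification does not establish it. Granting that, your counting ($T_1+T_2+T_3=N$, $2T_1+T_2=N$, hence $T_1=T_3$ and $Q_\cT$ has $N+T_1>N$ arrows on $N$ vertices) is correct and is a genuinely different route from the paper, which instead exhibits an oriented $3$-cycle in $Q_\cT$ coming from the internal triangle attached to a bounding arc. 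Your version avoids that local analysis and proves the stronger statement that the underlying graph is not a cycle at all, which is a nice trade; but it does not compensate for the missing non-orientedness in the forward direction.
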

\begin{proof}
Assume first that $\cT=\cT^s$. Then the only arcs in the annulus are bridging. 
In particular, every triangle of $\cT$ 
has a boundary segment as one of its edges and two bridging arcs. So every triangle gives rise 
to exactly one arrow in $Q_{\cT}$ and every vertex of $Q_{\cT}$ has exactly two arrows incident 
with it. Thus the graph underlying $Q_{\cT}$ is a cycle. 
The arcs in $\cT$ form a collection of fans from each boundary (some of these fans only consist of two arcs). 
Look at the outer boundary. 
Take any two neighboured fans $f_1$ and $f_2$: the right most arc of $f_1$ together with the left most arc of 
$f_2$ form a $V$. Together with the next arc to the left, they form an N, together with the next arc on the right they 
form a \reflectbox{N}. This means that $Q_{\cT}$ has at least one sink and at least one source, respectively. 

Assume now that 
$Q_{\cT}$ is a non-oriented cyclic quiver, and suppose that the corresponding triangulation $\cT$ has 
a peripheral arc $\gamma'$; then $\cT$ also must have at least one bounding arc $\gamma$ 
by Lemma~\ref{lm:bounding-exist}. For $\cT$ 
to be a triangulation, we need $\gamma$ to be part of an internal triangle. 
This internal triangle gives rise to an oriented 3-cycle in $Q_{\cT}$, leading to a contradiction. 
\end{proof}



The following proposition is a consequence of the construction from Section 4 of~\cite{bpt}. 
It uses a construction of a triangulation for a skeletal quiddity sequence. 

\begin{proposition}\label{prop:frieze-tria} Let $q=q^s$ be a skeletal 
quiddity sequence. 
Then up to rotating the inner boundary and its marked points, 
there is a unique skeletal triangulation of an annulus $C_{m,n}$ such that the infinite periodic frieze associated 
to the outer boundary has $q$ as its quiddity sequence. 
\end{proposition}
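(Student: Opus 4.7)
The plan is to reconstruct the triangulation fan by fan directly from the quiddity sequence, and then show that the only remaining freedom is a cyclic shift of the inner marked points.

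First I would pin down the size of the inner boundary. In a skeletal triangulation every arc is bridging, so every triangle has exactly one boundary segment: its two bridging arcs meet at a common vertex and force the third edge to lie between the two opposite-boundary vertices, which must be adjacent on that boundary since no peripheral arc is allowed. Splitting the triangles by which boundary carries their segment gives $T_{\mathrm{outer}} = m$ and $T_{\mathrm{inner}} = n$, and counting corners at the outer vertices yields $\sum a_i = 2m + n$. Hence $n$ is forced to equal $\sum a_i - 2m$, which is $\geq 1$ precisely because $q \neq (2, \dots, 2)$.

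For existence I would construct the triangulation explicitly. Label the outer vertices $P_1, \dots, P_m$, choose a starting inner vertex $Q_0$ on an inner boundary with $n$ points $Q_0, \dots, Q_{n-1}$, and set $s_1 = 0$ and $s_{i+1} = s_i + a_i - 2 \pmod n$. At each $P_i$ draw arcs to the consecutive inner vertices $Q_{s_i}, Q_{s_i+1}, \dots, Q_{s_i + a_i - 2}$. Since $\sum(a_i - 2) = n$, the fans close up consistently after one loop around the annulus, producing a pairwise non-crossing bridging system. The complementary regions are one wedge triangle $P_i P_{i+1} Q_{s_{i+1}}$ for each $i$ together with $a_i - 2$ middle triangles $P_i Q_{s_i + j} Q_{s_i + j + 1}$ at each $P_i$, for a total of $m + n$ triangles; each $P_i$ is incident with exactly $a_i$ of them, so the outer quiddity is $q$. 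Alternatively one may invoke the construction of~\cite[Section~4]{bpt}, noting that a quiddity sequence with no $1$'s produces no ears, so its output is already skeletal.

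For uniqueness, let $\cT^s$ be any skeletal triangulation with outer quiddity $q$. At each $P_i$ the $a_i - 1$ incident arcs are cyclically ordered around the vertex, and each consecutive pair bounds a triangle whose third edge joins two inner vertices; since $\cT^s$ is skeletal, that edge must be an inner boundary segment, so the fan at $P_i$ uses $a_i - 1$ consecutive inner vertices. Moreover the rightmost arc at $P_i$ coincides with the leftmost arc at $P_{i+1}$, being the two non-boundary edges of the wedge triangle on $[P_i, P_{i+1}]$. Hence $\cT^s$ is completely determined by the choice of the first inner vertex in the fan at $P_1$, and varying this is exactly a rotation of the inner boundary. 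The delicate point is that the identity $\sum(a_i - 2) = n$ ensures the fans truly wrap around the inner boundary exactly once, so that two skeletal triangulations with outer quiddity $q$ cannot differ by a subtler global re-routing of arcs around the annulus.
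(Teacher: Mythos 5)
Your proof is correct and follows essentially the same route as the paper: build the triangulation by attaching a fan of $a_i-1$ bridging arcs at each outer vertex, with consecutive fans sharing an inner endpoint, and observe that everything is forced once the inner endpoint of the first arc is chosen---exactly the rotational freedom allowed in the statement (the paper obtains this from the algorithm of \cite[Corollary 4.5]{bpt} and reads uniqueness off the construction). Your preliminary corner count $\sum_i a_i = 2m+n$ pinning down the number of inner marked points and your explicit uniqueness analysis are welcome refinements of the same argument; only note that the rightmost arc at $P_i$ and the leftmost arc at $P_{i+1}$ do not coincide but merely share their inner endpoint, as your parenthetical about the wedge triangle already indicates.
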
 

\begin{proof}
This follows from the algorithm described in the proof of~\cite[Corollary 4.5]{bpt}: 
Let $q=q^s=(a_1,\dots, a_m)$. Recall that $q\ne (2,2,\dots,2)$. 
Draw an annulus with $m$ vertices on the outer boundary, labeled $1,2,\dots, m$ clockwise. 
For every $a_i$, draw $a_i-1$ arc segments at vertex $i$. 
See the first picture in Figure~\ref{fig:quid-tri} for an illustration.

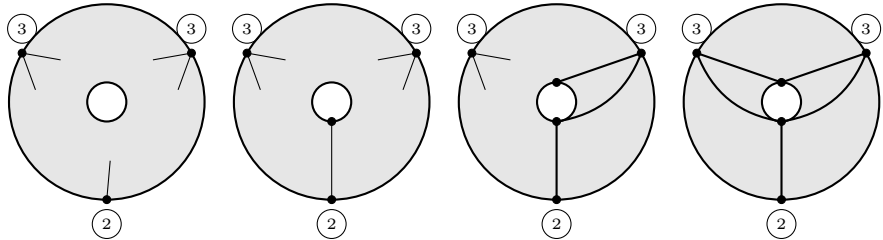
\begin{figure}[ht!]
\begin{tikzpicture}[scale=.26]
\draw[thick,fill=background, radius=5cm] (0,0) circle; 
\draw[thick,fill=white!10, radius=1cm] (0,0) circle; 
\draw[fill=black, radius = .2cm] (30:5) circle node[above ] {$\tiny\circled{3}$};
\draw[fill=black, radius = .2cm] (150:5) circle node[above ] {$\tiny\circled{3}$};
\draw[fill=black, radius = .2cm] (270:5) circle node[below] {$\tiny\circled{2}$};
\draw (30:5) -- +(190:2);
\draw (30:5) -- +(250:2);
\draw (150:5) -- +(-10:2);
\draw (150:5) -- +(290:2);
\draw(270:5) -- +(85:2);

\begin{scope}[xshift =11.5cm]
\draw[thick,fill=background, radius=5cm] (0,0) circle; 
\draw[thick,fill=white!10, radius=1cm] (0,0) circle; 

\draw[fill=black, radius = .2cm] (30:5) circle node[above ] {$\tiny\circled{3}$};
\draw[fill=black, radius = .2cm] (150:5) circle node[above ] {$\tiny\circled{3}$};
\draw[fill=black, radius = .2cm] (270:5) circle node[below] {$\tiny\circled{2}$};

\draw[fill=black, radius = .2cm] (270:1) circle;
\draw (30:5) -- +(190:2);
\draw (30:5) -- +(250:2);
\draw (150:5) -- +(-10:2);
\draw (150:5) -- +(290:2);

\draw (270:5) -- (270:1);
\end{scope}

\begin{scope}[xshift =23cm]
\draw[thick,fill=background, radius=5cm] (0,0) circle; 
\draw[thick,fill=white!10, radius=1cm] (0,0) circle; 

\draw[fill=black, radius = .2cm] (30:5) circle node[above ] {$\tiny\circled{3}$};
\draw[fill=black, radius = .2cm] (150:5) circle node[above ] {$\tiny\circled{3}$};
\draw[fill=black, radius = .2cm] (270:5) circle node[below] {$\tiny\circled{2}$};

\draw[fill=black, radius = .2cm] (270:1) circle;
\draw[fill=black, radius = .2cm] (90:1) circle;
\draw (150:5) -- +(-10:2);
\draw (150:5) -- +(290:2);

\draw[thick] (270:5) -- (270:1);
\draw[thick] (30:5) to[bend left] (270:1);
\draw[thick] (30:5) -- (90:1);
\end{scope}

\begin{scope}[xshift =34.5cm]
\draw[thick,fill=background, radius=5cm] (0,0) circle; 
\draw[thick,fill=white!10, radius=1cm] (0,0) circle; 

\draw[fill=black, radius = .2cm] (30:5) circle node[above ] {$\tiny\circled{3}$};
\draw[fill=black, radius = .2cm] (150:5) circle node[above ] {$\tiny\circled{3}$};
\draw[fill=black, radius = .2cm] (270:5) circle node[below] {$\tiny\circled{2}$};

\draw[fill=black, radius = .2cm] (270:1) circle;
\draw[fill=black, radius = .2cm] (90:1) circle;

\draw[thick] (270:5) -- (270:1);
\draw[thick] (30:5) to[bend left] (270:1);
\draw[thick] (30:5) -- (90:1);
\draw[thick] (150:5) -- (90:1);
\draw[thick] (150:5) to[bend right] (270:1);
\end{scope}

\end{tikzpicture}
\caption{Triangulation from the quiddity sequence $(2,3,3)$.}
\label{fig:quid-tri}
\end{figure}

Extend the leftmost arc segment at vertex $1$ and connect it with 
the inner boundary, thus creating a marked point at the inner boundary. 
See second picture in Figure~\ref{fig:quid-tri}. 

If \(m=1\), continue connecting the arc segments (from left to right) 
 at vertex $1$ to the inner boundary, until one arc is left, creating
$a_1-2$ marked points at the inner boundary. Then connect the remaining segment to the first marked point on the inner boundary.

If \(m>1\), continue connecting the arc segments (from left to right) 
at vertex $1$ to the inner boundary, creating
$a_1-1$ marked points at the inner boundary. 

Now assume the arc segments at vertices $1,2,\dots, i-1$ have been connected to the inner 
boundary. 
Connect the left most arc segment at 
vertex $i$ to the last of the marked points created for vertex $i-1$. If $a_i=2$, this is the only 
segment to connect. 
If $i<m$ {\em and if} there is an index $j$ with $i<j\le m$ such that $a_j>2$, 
create new vertices for the next $a_i-2$ arc segments at $i$. 
See third picture in Figure~\ref{fig:quid-tri}. 
If $i=m$ or if $a_j=2$ for all $j=i+1,\dots,m$, 
create new vertices for the next $a_i-3$ arc segments at $i$ and connect the 
last arc segment to the first marked point created for vertex $1$, as in the last picture of 
Figure~\ref{fig:quid-tri}. In the latter case, connect any remaining vertices $j$ with $j>i$ and 
$a_j=2$ with this same marked point. 


As we see from Figure~\ref{fig:first-choice-tri}, the triangulation is only dependent on the first arc: once the endpoint 
of the first arc on the inner boundary is fixed, the rest is determined. Thus we have the uniqueness of the triangulation up to this first choice. 
\end{proof}

\begin{figure}[ht!]
 \begin{subfigure}[b]{0.3\linewidth}
 \centering
\begin{tikzpicture}[scale=.4]
\fill[background] (0,0) rectangle (8,5) ; 
\draw[thick] (-.5,0) -- (8.5,0); 
\draw[thick] (-.5,5) -- (8.5,5); 

\node[inner sep=1.5pt, fill, circle] (3) at (1, 0) {};
\node[inner sep=1.5pt, fill, circle] (4) at (3, 0) {};
\node[inner sep=1.5pt, fill, circle] (1) at (5, 0) {};
\node[inner sep=1.5pt, fill, circle] (2) at (7, 0) {};
\node[below] (1txt) at (1) {$1$};
\node[below] (2txt) at (2) {$2$};
\node[below] (3txt) at (3) {$3$};
\node[below] (4txt) at (4) {$4$};

\node[inner sep=1.5pt, fill, circle] (4-bar) at (.5, 5) {};
\node[inner sep=1.5pt, fill, circle] (5-bar) at (2.5, 5) {};
\node[inner sep=1.5pt, fill, circle] (1-bar) at (4, 5) {};
\node[inner sep=1.5pt, fill, circle] (2-bar) at (5, 5) {};
\node[inner sep=1.5pt, fill, circle] (3-bar) at (7.5, 5) {};
\node[above] (1-bartxt) at (1-bar) {$\overline{1}$};
\node[above] (2-bartxt) at (2-bar) {$\overline{2}$};
\node[above] (3-bartxt) at (3-bar) {$\overline{3}$};
\node[above] (4-bartxt) at (4-bar) {$\overline{4}$};
\node[above] (5-bartxt) at (5-bar) {$\overline{5}$};

\draw[arc2, very thick] (1)--(1-bar);
\begin{scope}[arc1, very thick]
\draw	 (3) -- (4-bar);
\draw	 (4) -- (4-bar);
\draw	 (4) -- (5-bar);
\draw	 (4) -- (1-bar);
\draw	 (1) -- (2-bar);
\draw	 (2) -- (2-bar);
\draw	 (2) -- (3-bar);
\draw	 (2) -- (8,3);
\draw	 (0,3) -- (4-bar);
\end{scope}

\begin{scope}[very thick]
\clip (1) circle (.8cm) (2) circle (.8cm) (3) circle (.8cm) (4) circle (.8cm);
\draw (1)--(1-bar);
\draw	 (3) -- (4-bar);
\draw	 (4) -- (4-bar);
\draw	 (4) -- (5-bar);
\draw	 (4) -- (1-bar);
\draw	 (1) -- (2-bar);
\draw	 (2) -- (2-bar);
\draw	 (2) -- (3-bar);
\draw	 (2) -- (8,3);
\end{scope}
\end{tikzpicture}
\end{subfigure}
\begin{subfigure}[b]{0.3\linewidth}
\centering
\begin{tikzpicture}[scale=.4]
\fill[background] (0,0) rectangle (8,5) ; 
\draw[thick] (-.5,0) -- (8.5,0); 
\draw[thick] (-.5,5) -- (8.5,5); 

\node[inner sep=1.5pt, fill, circle] (3) at (1, 0) {};
\node[inner sep=1.5pt, fill, circle] (4) at (3, 0) {};
\node[inner sep=1.5pt, fill, circle] (1) at (5, 0) {};
\node[inner sep=1.5pt, fill, circle] (2) at (7, 0) {};
\node[below] (1txt) at (1) {$1$};
\node[below] (2txt) at (2) {$2$};
\node[below] (3txt) at (3) {$3$};
\node[below] (4txt) at (4) {$4$};

\node[inner sep=1.5pt, fill, circle] (5-bar) at (.5, 5) {};
\node[inner sep=1.5pt, fill, circle] (1-bar) at (2.5, 5) {};
\node[inner sep=1.5pt, fill, circle] (2-bar) at (4, 5) {};
\node[inner sep=1.5pt, fill, circle] (3-bar) at (5, 5) {};
\node[inner sep=1.5pt, fill, circle] (4-bar) at (7.5, 5) {};
\node[above] (1-bartxt) at (1-bar) {$\overline{1}$};
\node[above] (2-bartxt) at (2-bar) {$\overline{2}$};
\node[above] (3-bartxt) at (3-bar) {$\overline{3}$};
\node[above] (4-bartxt) at (4-bar) {$\overline{4}$};
\node[above] (5-bartxt) at (5-bar) {$\overline{5}$};

\draw[arc2, very thick] (1)--(1-bar);
\begin{scope}[arc1, very thick]
\draw	 (3) -- (0,3);
\draw	 (8,3) -- (4-bar);
\draw	 (4) -- (0,4);
\draw	 (8,4) -- (4-bar);
\draw	 (4) -- (5-bar);
\draw	 (4) -- (1-bar);
\draw	 (1) -- (2-bar);
\draw	 (2) -- (2-bar);
\draw	 (2) -- (3-bar);
\draw	 (2) -- (4-bar);
\end{scope}
\begin{scope}[very thick]
\clip (1) circle (.8cm) (2) circle (.8cm) (3) circle (.8cm) (4) circle (.8cm);
\draw (1)--(1-bar);
\draw	 (3) -- (0,3);
\draw	 (4) -- (0,4);
\draw	 (4) -- (5-bar);
\draw	 (4) -- (1-bar);
\draw	 (1) -- (2-bar);
\draw	 (2) -- (2-bar);
\draw	 (2) -- (3-bar);
\draw	 (2) -- (4-bar);
\end{scope}
\end{tikzpicture}
\end{subfigure}
 \begin{subfigure}[b]{0.3\linewidth}
 \centering
\begin{tikzpicture}[scale=.4]
\fill[background] (0,0) rectangle (8,5) ; 
\draw[thick] (-.5,0) -- (8.5,0); 
\draw[thick] (-.5,5) -- (8.5,5); 

\node[inner sep=1.5pt, fill, circle] (3) at (1, 0) {};
\node[inner sep=1.5pt, fill, circle] (4) at (3, 0) {};
\node[inner sep=1.5pt, fill, circle] (1) at (5, 0) {};
\node[inner sep=1.5pt, fill, circle] (2) at (7, 0) {};
\node[below] (1txt) at (1) {$1$};
\node[below] (2txt) at (2) {$2$};
\node[below] (3txt) at (3) {$3$};
\node[below] (4txt) at (4) {$4$};

\node[inner sep=1.5pt, fill, circle] (1-bar) at (.5, 5) {};
\node[inner sep=1.5pt, fill, circle] (2-bar) at (2.5, 5) {};
\node[inner sep=1.5pt, fill, circle] (3-bar) at (4, 5) {};
\node[inner sep=1.5pt, fill, circle] (4-bar) at (5, 5) {};
\node[inner sep=1.5pt, fill, circle] (5-bar) at (7.5, 5) {};
\node[above] (1-bartxt) at (1-bar) {$\overline{1}$};
\node[above] (2-bartxt) at (2-bar) {$\overline{2}$};
\node[above] (3-bartxt) at (3-bar) {$\overline{3}$};
\node[above] (4-bartxt) at (4-bar) {$\overline{4}$};
\node[above] (5-bartxt) at (5-bar) {$\overline{5}$};

\draw[arc2, very thick] (1)--(1-bar);
\begin{scope}[arc1, very thick]
\draw	 (3) -- (0,1);
\draw	 (8,1)--(4-bar);
\draw	 (4) -- (0,2.5);
\draw	 (8,2.5)--(4-bar);
\draw	 (4) -- (0,4);
\draw	 (8,4)--(5-bar);
\draw	 (4) -- (1-bar);
\draw	 (1) -- (2-bar);
\draw	 (2) -- (2-bar);
\draw	 (2) -- (3-bar);
\draw	 (2) -- (4-bar);
\end{scope}

\begin{scope}[very thick]
\clip (1) circle (.8cm) (2) circle (.8cm) (3) circle (.8cm) (4) circle (.8cm);
\draw	 (1)--(1-bar);
\draw	 (3) -- (0,1);
\draw	 (4) -- (0,2.5);
\draw	 (4) -- (0,4);
\draw	 (4) -- (1-bar);
\draw	 (1) -- (2-bar);
\draw	 (2) -- (2-bar);
\draw	 (2) -- (3-bar);
\draw	 (2) -- (4-bar);
\end{scope}
\end{tikzpicture}
\end{subfigure}

\begin{subfigure}[b]{0.3\linewidth}
\centering
\begin{tikzpicture}[scale=.4]
\draw[thick,fill=background, radius=4cm] (0,0) circle; 
\draw[thick,fill=white!10, radius=2cm] (0,0) circle; 
\node[inner sep=1.5pt, fill, circle] (1) at (0:4) {};
\node[below right] (1txt) at (1) {$\tiny\circled{3}$};

\node[inner sep=1.5pt, fill, circle] (2) at (90:4) {};
\node[above right] (1txt) at (2) {$\tiny\circled{4}$};

\node[inner sep=1.5pt, fill, circle] (3) at (180:4) {};
\node[below left] (1txt) at (3) {$\tiny\circled{2}$};

\node[inner sep=1.5pt, fill, circle] (4) at (270:4) {};
\node[below right] (1txt) at (4) {$\tiny\circled{4}$};

\node[right] (1txt) at (1) {$1$};
\node[above] (2txt) at (2) {$2$};
\node[left] (3txt) at (3) {$3$};
\node[below] (4txt) at (4) {$4$};
\node[inner sep=1.5pt, fill, circle] (1-bar) at (0:2) {};
\node[inner sep=1.5pt, fill, circle] (2-bar) at (70:2) {};
\node[inner sep=1.5pt, fill, circle] (3-bar) at (130:2) {};
\node[inner sep=1.5pt, fill, circle] (4-bar) at (190:2) {};
\node[inner sep=1.5pt, fill, circle] (5-bar) at (290:2) {};
\node[left] (1-bartxt) at (1-bar) {$\overline{1}$};
\node[below] (2-bartxt) at (2-bar) {$\overline{2}$};
\node[below] (3-bartxt) at (3-bar) {$\overline{3}$};
\node[right] (4-bartxt) at (4-bar) {$\overline{4}$};
\node[above] (5-bartxt) at (5-bar) {$\overline{5}$};

\begin{scope}[color=arc1, very thick]
\draw	 (3) to (4-bar);
\draw	 (4) to[bend left] (4-bar);
\draw	 (4) to (5-bar);
\draw	 (4) to[bend right] (1-bar);
\draw	 (1) to[bend right] (2-bar);
\draw	 (2) to (2-bar);
\draw	 (2) to (3-bar);
\draw	 (2) .. controls (-3,2) and (-2.5,1) .. (4-bar);
\end{scope}
\draw[arc2, very thick] (1) to (1-bar);

\begin{scope}[even odd rule, very thick]
\clip (1) circle (.8cm) (2) circle (.8cm) (3) circle (.8cm) (4) circle (.8cm);
\draw	 (3) to (4-bar);
\draw	 (4) to[bend left] (4-bar);
\draw	 (4) to (5-bar);
\draw	 (4) to[bend right] (1-bar);
\draw	 (1) to[bend right] (2-bar);
\draw	 (2) to (2-bar);
\draw	 (2) to (3-bar);
\draw	 (2) .. controls (-3,2) and (-2.5,1) .. (4-bar);
\draw[very thick] (1) to (1-bar);
\end{scope}

\end{tikzpicture}
\end{subfigure}
%
%
\begin{subfigure}[b]{0.3\linewidth}
\centering
\begin{tikzpicture}[scale=.4]
\draw[thick,fill=background, radius=4cm] (0,0) circle; 
\draw[thick,fill=white!10, radius=2cm] (0,0) circle; 

\node[inner sep=1.5pt, fill, circle] (1) at (0:4) {};
\node[below right] (1txt) at (1) {$\tiny\circled{3}$};

\node[inner sep=1.5pt, fill, circle] (2) at (90:4) {};
\node[above right] (1txt) at (2) {$\tiny\circled{4}$};

\node[inner sep=1.5pt, fill, circle] (3) at (180:4) {};
\node[below left] (1txt) at (3) {$\tiny\circled{2}$};

\node[inner sep=1.5pt, fill, circle] (4) at (270:4) {};
\node[below right] (1txt) at (4) {$\tiny\circled{4}$};

\node[right] (1txt) at (1) {$1$};
\node[above] (2txt) at (2) {$2$};
\node[left] (3txt) at (3) {$3$};
\node[below] (4txt) at (4) {$4$};

\node[inner sep=1.5pt, fill, circle] (2-bar) at (0:2) {};
\node[inner sep=1.5pt, fill, circle] (3-bar) at (70:2) {};
\node[inner sep=1.5pt, fill, circle] (4-bar) at (130:2) {};
\node[inner sep=1.5pt, fill, circle] (5-bar) at (190:2) {};
\node[inner sep=1.5pt, fill, circle] (1-bar) at (290:2) {};
\node[above] (1-bartxt) at (1-bar) {$\overline{1}$};
\node[left] (2-bartxt) at (2-bar) {$\overline{2}$};
\node[below] (3-bartxt) at (3-bar) {$\overline{3}$};
\node[below] (4-bartxt) at (4-bar) {$\overline{4}$};
\node[right] (5-bartxt) at (5-bar) {$\overline{5}$};

\begin{scope}[color=arc1, very thick]
\draw	 (3) to[bend left] (4-bar);
\draw	 (4) .. controls (230:3.9) and (180:3.8).. (4-bar);
\draw	 (4) to[bend left] (5-bar);
\draw	 (4) to[bend left] (1-bar);
\draw	 (1) to[bend left] (2-bar);
\draw	 (2) to[bend left] (2-bar);
\draw	 (2) to[bend left] (3-bar);
\draw	 (2) to[bend left] (4-bar);
\end{scope}
\draw[arc2, very thick] (1) to[bend left] (1-bar);

\begin{scope}[even odd rule, very thick]
\clip (1) circle (.8cm) (2) circle (.8cm) (3) circle (.7cm) (4) circle (.8cm);
\draw	 (3) to[bend left] (4-bar);
\draw	 (4) .. controls (230:3.9) and (180:3.8).. (4-bar);
\draw	 (4) to[bend left] (5-bar);
\draw	 (4) to[bend left] (1-bar);
\draw	 (1) to[bend left] (2-bar);
\draw	 (2) to[bend left] (2-bar);
\draw	 (2) to[bend left] (3-bar);
\draw	 (2) to[bend left] (4-bar);
\draw 	 (1) to[bend left] (1-bar);
\end{scope}
\end{tikzpicture}
\end{subfigure}
%
%
\begin{subfigure}[b]{0.3\linewidth}
\centering
\begin{tikzpicture}[scale=.4]
\draw[thick,fill=background, radius=4cm] (0,0) circle; 
\draw[thick,fill=white!10, radius=2cm] (0,0) circle; 
\node[inner sep=1.5pt, fill, circle] (1) at (0:4) {};
\node[below right] (1txt) at (1) {$\tiny\circled{3}$};
\node[inner sep=1.5pt, fill, circle] (2) at (90:4) {};
\node[above right] (1txt) at (2) {$\tiny\circled{4}$};
\node[inner sep=1.5pt, fill, circle] (3) at (180:4) {};
\node[below left] (1txt) at (3) {$\tiny\circled{2}$};
\node[inner sep=1.5pt, fill, circle] (4) at (270:4) {};
\node[below right] (1txt) at (4) {$\tiny\circled{4}$};

\node[right] (1txt) at (1) {$1$};
\node[above] (2txt) at (2) {$2$};
\node[left] (3txt) at (3) {$3$};
\node[right] (4txt) at (4) {$4$};
\node[inner sep=1.5pt, fill, circle] (3-bar) at (0:2) {};
\node[inner sep=1.5pt, fill, circle] (4-bar) at (70:2) {};
\node[inner sep=1.5pt, fill, circle] (5-bar) at (130:2) {};
\node[inner sep=1.5pt, fill, circle] (1-bar) at (190:2) {};
\node[inner sep=1.5pt, fill, circle] (2-bar) at (290:2) {};
\node[right] (1-bartxt) at (1-bar) {$\overline{1}$};
\node[above] (2-bartxt) at (2-bar) {$\overline{2}$};
\node[left] (3-bartxt) at (3-bar) {$\overline{3}$};
\node[below] (4-bartxt) at (4-bar) {$\overline{4}$};
\node[below] (5-bartxt) at (5-bar) {$\overline{5}$};

\begin{scope}[color=arc1,very thick]
\draw	 (3) .. controls (140:4) and (100:3.5) .. (4-bar);
\draw	 (4) .. controls (250:4) and (210:3.8) .. (180:3.3) .. controls (150:3.3) and (120:3.3)..  (4-bar);
\draw	 (4) ..controls (225:4) and (180:3.5) .. (5-bar);
\draw	 (4) to[bend left] (1-bar);
\draw	 (1) to[bend left] (2-bar);
\draw	 (2) .. controls (45:4) and (350:4) .. (2-bar);
\draw	 (2) to[bend left] (3-bar);
\draw	 (2) to[bend left] (4-bar);
\end{scope}
\draw[arc2, very thick] (1) .. controls (315:5) and (250:4)..  (1-bar);

\begin{scope}[even odd rule, very thick]
\clip (1) circle (.8cm) (2) circle (.8cm) (3) circle (.7cm) (4) circle (.8cm);
\draw	 (3) .. controls (140:4) and (100:3.5) .. (4-bar);
\draw	 (4) .. controls (250:4) and (210:3.8) .. (180:3.2) .. controls (150:3.2) and (120:3.3)..  (4-bar);
\draw	 (4) ..controls (225:4) and (180:3.5) .. (5-bar);
\draw	 (4) to[bend left] (1-bar);
\draw	 (1) to[bend left] (2-bar);
\draw	 (2) .. controls (45:4) and (350:4) .. (2-bar);
\draw	 (2) to[bend left] (3-bar);
\draw	 (2) to[bend left] (4-bar);
\draw 	(1) .. controls (315:5) and (225:4)..  (1-bar);
\end{scope}
\end{tikzpicture}
\end{subfigure}
\caption{An illustration of the uniqueness of the triangulation up to the choice of the first bridging arc (in orange), 
for the quiddity sequence \(q=(3,4,2,4)\).
}
\label{fig:first-choice-tri}
\end{figure}
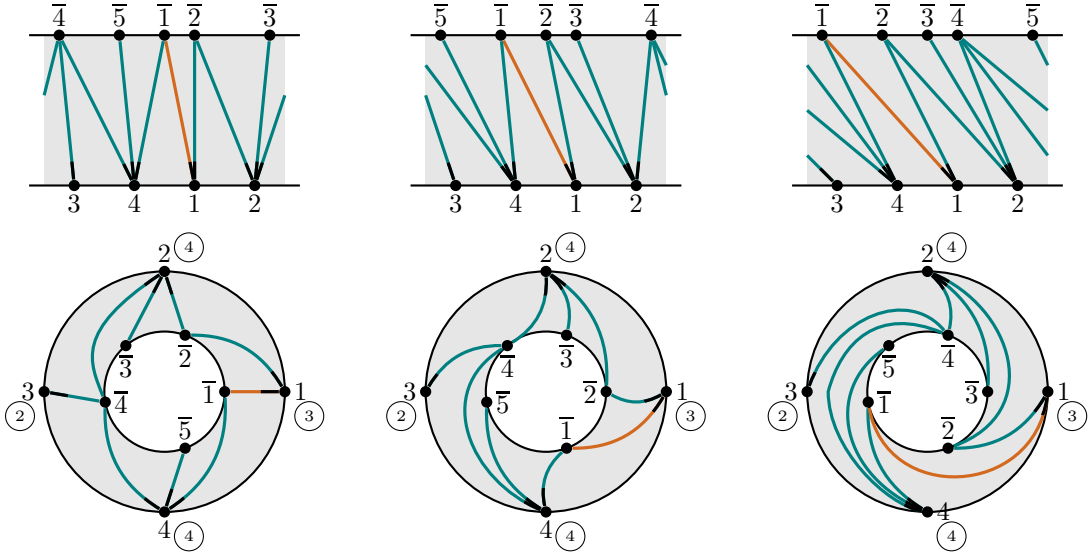

The above proposition also suggests that if $q_1$ is a skeletal quiddity sequence corresponding to the outer boundary in the skeletal triangulation $\cT$, then it determines the skeletal quiddity sequence $q_2$ for the inner boundary as well. We state the formula in the next corollary. For this, we write the entries of a skeletal quiddity sequence $q$ as follows:
\[
q=(a_{i_1}, \underbrace{ 2, \ldots , 2}_{k_1}, a_{i_2},\underbrace{ 2, \ldots , 2}_{k_2},\dots, 
a_{i_r},\underbrace{ 2, \ldots , 2}_{k_r})
\] 
or $(a_{i_1},2^{(k_1)},a_{i_2},2^{(k_2)},\dots, a_{i_r},2^{(k_r)})$ for short, 
where $a_{i_j}> 2$ and $k_j\ge 0$ 
for all $j$.

\begin{corollary}\label{cor:partner-quiddity}
Let $q_1=(a_{i_1},2^{(k_1)},a_{i_2},2^{(k_2)},\dots, a_{i_r},2^{(k_r)})$ 
be a skeletal quiddity sequence, where 
$a_{i_j}>2$ and $k_j\ge 0$ for all $j$. Let $\cT$ be the triangulation associated to $q_1$ 
by Proposition~\ref{prop:frieze-tria}. Then the quiddity sequence given by the inner boundary of 
$\cT$ is 
$q_2=(2^{(a_{i_1}-3)},k_1+3, 2^{(a_{i_2}-3)}, k_2+3,\dots, 2^{(a_{i_r}-3)},k_r+3)$. 
\end{corollary}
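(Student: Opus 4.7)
The plan is to compute the inner quiddity sequence by directly inspecting the triangulation $\mathcal{T}$ produced by the algorithm in the proof of Proposition~\ref{prop:frieze-tria}. Because $q_1 = q_1^s$, the triangulation is skeletal, so by Lemma~\ref{lm:skeleton-tri}(2) every arc of $\mathcal{T}$ is bridging; consequently each outer vertex $i$ is the apex of a fan of $a_i - 1$ arcs terminating on the inner boundary. In particular, each big vertex $i_j$ supports a fan of $a_{i_j} - 1$ arcs, while each intermediate ``$2$''-vertex supports a single arc.

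The first task is to identify how the different fans share inner marked points. Unwinding the algorithm's prescription ``connect the leftmost arc at vertex $i$ to the last marked point created for vertex $i-1$,'' together with its wrap-around rule at vertex $i_r$ (``connect the last arc segment to the first marked point created for vertex $1$''), one sees that between consecutive big vertices $i_j$ and $i_{j+1}$ (cyclically modulo $r$) there is a unique common inner marked point $w_j$, to which the rightmost arc of $i_j$'s fan, the leftmost arc of $i_{j+1}$'s fan, and each of the $k_j$ arcs from the intermediate ``$2$''-vertices all terminate. Every inner marked point is then either one of the $r$ junction points $w_1,\dots,w_r$ or a middle point in some fan at $i_j$; since $i_j$'s fan visits $a_{i_j}-1$ marked points of which exactly two ($w_{j-1}$ and $w_j$) are junctions, the fan contains $a_{i_j}-3$ middle points, accounting for all $\sum_j a_{i_j} - 2r$ inner vertices.

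Counting triangles at each inner marked point is then a purely local computation. A middle point in $i_j$'s fan has exactly one incident arc (to $i_j$) and two inner-boundary segments, so it sits in $2$ triangles. A junction $w_j$ has $k_j + 2$ incident arcs --- one each to $i_j$ and $i_{j+1}$ together with one to each of the $k_j$ intermediate ``$2$''-vertices --- together with two inner-boundary segments, and hence sits in $k_j + 3$ triangles. Because arcs are non-crossing and drawn outward from the outer boundary, the inner marked points appear in cyclic order as the middle points of $i_1$'s fan, then $w_1$, then the middle points of $i_2$'s fan, then $w_2$, and so on, up to $w_r$. Reading off the triangle counts in this cyclic order produces exactly the formula claimed for $q_2$.

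The main obstacle is the asymmetric treatment of the first and last big vertices in the algorithm: vertex $i_1=1$ creates $a_{i_1}-1$ new marked points while vertex $i_r$ creates only $a_{i_r}-3$, closing the loop by reusing the first marked point of $i_1$'s fan. One must confirm that after this wrap-around the two asymmetries combine to leave exactly $a_{i_r}-3$ middle points in the final fan and a bona fide junction $w_r$ with the expected $k_r+3$ triangles, rather than an off-by-one error at the seam. Once this bookkeeping check is carried out, the corollary follows immediately from the local counts above.
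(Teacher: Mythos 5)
Your proof is correct and is essentially the paper's own (implicit) justification: the paper states Corollary~\ref{cor:partner-quiddity} without a separate proof, as a reading-off of triangle counts from the triangulation built in Proposition~\ref{prop:frieze-tria}, and your fan/junction bookkeeping (each outer vertex $i$ carrying $a_i-1$ arcs, middle inner points contributing $2$, junction points contributing $k_j+3$) supplies exactly those details. The seam check you defer does go through as you describe for $r\ge 2$ (vertex $1$ contributes the $a_{i_1}-3$ middle points $\bar 2,\dots,\overline{a_{i_1}-2}$, while $\bar 1$ becomes the junction $w_r$ with $k_r+2$ incident arcs); for $r=1$ with $k_1\ge 1$ one must read the algorithm with the same wrap-around convention (the last arc of the single fan and the arcs from the trailing $2$-vertices all end at $\bar 1$), after which your local count again yields $q_2=(2^{(a_{i_1}-3)},k_1+3)$.
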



The following proposition describes the correspondence between triangulations and infinite periodic friezes, and their respective skeletal versions. 
Let $\mathcal{T}$ be a triangulation of an annulus. Let $f$ be the map that sends $\mathcal T$ to 
the corresponding pair of infinite periodic friezes $f(\mathcal T)=(\mathcal F_1, \mathcal F_2)$. 
Let $s_T$ be the map that sends a triangulation to its skeletal triangulation. 
Let $s_F$ be the map sending a pair of infinite periodic friezes to the pair of their skeletal versions, 
so that $s_F((\mathcal F_1, \cF_2))=(\mathcal F_1^s,\mathcal F_2^s)$.

\begin{proposition}\label{skeleton}
We have that $f\circ s_T=s_F\circ f$, i.e. $f(\mathcal T^s)=s_F\circ f(\mathcal T)$.

\begin{center}
\begin{tikzpicture}
\node (1) at (0,0) {$\cT$};
\node (2) at (0,-1.5) {$\cT^s$};
\node (3) at (2,0) {($\cF_1$,};
\node (4) at (2.7,0) {$\cF_2$)};
\node (7) at (2.3,-1.6) {,};
\node (5) at (2,-1.5) {($\cF_1^s$};
\node (6) at (2.7,-1.5) {${\cF_2^s}$)};

\draw [|->] (1) --(2);
\draw [| ->](1) -- (3) ;
\draw [|->] (2)--(5);
\draw [|->] (3)--(5);
\draw [|->] (4)--(6);
\end{tikzpicture}
\end{center}
\end{proposition}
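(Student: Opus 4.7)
The plan is to compute both $f(\cT^s)$ and $s_F(f(\cT))$ explicitly in terms of quiddity sequences and compare them entry by entry. Writing $q_1,q_2$ for the quiddity sequences of $\cT$ along its two boundaries, $s_F(f(\cT))$ is by construction the pair of infinite friezes with quiddity sequences $q_1^s$ and $q_2^s$ (Definitions~\ref{def:skeleton-quid} and~\ref{def:skeleton-frieze}), whereas $f(\cT^s)$ is the pair of friezes whose quiddity sequences count the triangles at each marked point of $\cT^s$ along each boundary (Definition~\ref{def:triang-frieze}). So it suffices to show that these two pairs of quiddity sequences agree.

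The key observation is that the passage from $\cT$ to $\cT^s$ can be realized as an iterated ear removal. Indeed, by Definition~\ref{s_T} and Remark~\ref{partition}, $\cT^s$ is obtained from $\cT$ by cutting along bounding arcs and discarding the attached triangulated polygon, and any such polygon can be stripped off one peripheral triangle at a time. By Remark~\ref{rem:reduce-triangles}(b), each individual ear removal corresponds exactly to a reduction of the quiddity sequence of the boundary containing the apex at the entry $1$ recording that apex, while leaving the opposite boundary's quiddity sequence unchanged. Because any two ears meet in at most one marked point, these reductions commute, and iterating them produces a well-defined pair of quiddity sequences attached to $\cT^s$.

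To finish, I would observe that the resulting quiddity sequences contain no $1$'s, and hence coincide with $q_1^s$ and $q_2^s$. By Lemma~\ref{lm:skeleton-tri}(2), $\cT^s$ has only bridging arcs, so it contains no peripheral triangles; consequently no marked point of $\cT^s$ can be incident to just a single triangle, which would force its third edge to be a peripheral arc. Hence the quiddity sequences of $\cT^s$ are already reduced, and by order-independence of reductions they equal $q_1^s$ and $q_2^s$, giving $f(\cT^s)=(\cF_1^s,\cF_2^s)=s_F(f(\cT))$. The one thing to be careful about is that at every intermediate stage of the stripping process a legitimate reduction (as in Definition~\ref{frieze reduction}) is genuinely available; this is immediate because a triangulated polygon with at least two triangles always carries at least two ears, so the process never stalls before all peripheral material has been removed.
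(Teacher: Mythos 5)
Your argument is correct and follows essentially the same route as the paper: both reduce the claim to the fact that cutting off the triangulated polygons attached to bounding arcs can be realized as an iterated removal of ears of the annulus triangulation, each of which corresponds to a reduction of the relevant quiddity sequence by Remark~\ref{rem:reduce-triangles}(b), the paper phrasing this as an induction on the size of the removed polygon. Your additional check that the quiddity sequences of $\cT^s$ contain no $1$'s (so that, by order-independence of reductions, they are exactly $q_1^s$ and $q_2^s$) is a welcome extra precision rather than a departure from the paper's proof.
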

\begin{proof}
First notice that the reduction $s_T$ of a triangulation $\cT$ to its skeletal triangulation $\cT^s$ consists of 
cutting along all bounding arcs and removing the triangulated polygons attached with them by Definition \ref{s_T}.
This process is well defined by Lemma \ref{lm:skeleton-tri}. So it is enough to show that removing one polygon from a triangulation of an annulus produces the same pair of infinite periodic friezes as are obtained by applying reduction process to the friezes as in Definition \ref{frieze reduction}.

Consider now the deletion of a triangulated n-gon from a triangulation of an annulus, where \(n\geq 3\). We will show that such a deletion corresponds to a sequence of reductions on the corresponding quiddity sequence. If \(n=3\),
we use the fact that
 reduction of a quiddity sequence at a 1 corresponds to deletion of a peripheral triangle 
(as shown in Fig~\ref{fig:skeleton-triangulation}) in the corresponding triangulation of an annulus 
by the Remark \ref{rem:reduce-triangles}. 
So in this case, deletion trivially corresponds to a sequence of reductions. For higher \(n\) we can show the statement by induction on \(n\). 

 Suppose the statement holds for \(n=k\). Suppose we remove a triangulated \(k+1\)-gon. The triangulated \(k+2\)-gon must have at least two peripheral triangles, and at least one of them must also be on the boundary of the original triangulated annulus. If we delete this peripheral triangle from the annulus, our original \(k+1\)-gon is now a \(k\)-gon. Looking at the quiddity sequence side, we perform one reduction when deleting the peripheral triangle, and then a series of reductions when deleting the \(k-gon\). Hence the deletion of a \(k+1\)-gon corresponds to a series of reductions. 
\end{proof}

\begin{lemma}
\label{lem:skeletal-property-2}
Let $\cT$ be a triangulation of an annulus, let $(q_1,q_2)$ the pair 
of quiddity sequences associated to $\cT$. The following are equivalent:
\begin{enumerate}
\item $\cT=\cT^s$. 
\item There are no 1's in $q_1$ and $q_2$. 
\end{enumerate}
\end{lemma}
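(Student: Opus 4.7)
The plan is to prove the equivalence by verifying each implication separately. The direction (1)$\Rightarrow$(2) is cleanest to argue directly from definitions: assuming $\cT=\cT^s$, Lemma~\ref{lm:skeleton-tri}(2) tells us every arc of $\cT$ is bridging. Suppose for contradiction that $q_1$ has a $1$ at some vertex $v$ on $B_1$. Then $v$ lies in a unique triangle $\Delta$ of $\cT$; the two boundary segments of $B_1$ incident to $v$ must both be edges of $\Delta$, forcing its third edge to connect the two $B_1$-neighbours of $v$ and hence to be a peripheral arc, a contradiction. The same argument applies to $q_2$. (Alternatively, one could invoke Proposition~\ref{skeleton} to get $f(\cT)=f(\cT^s)=s_F(f(\cT))$, which means the quiddity sequences coincide with their reductions and thus contain no~$1$'s.)

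For the converse I will argue the contrapositive. Suppose $\cT\neq\cT^s$; then $\cT$ contains a peripheral arc, and Lemma~\ref{lm:bounding-exist} provides a bounding arc $\gamma$. By Remark~\ref{partition}, $\gamma$ splits the annulus into a triangulated annulus and a triangulated polygon $P$. Without loss of generality both endpoints $P_1,P_2$ of $\gamma$ lie on $B_1$; write the vertices of $P$ in cyclic order as $P_1,Q_1,\ldots,Q_k,P_2$ with $Q_1,\ldots,Q_k\in B_1$. Since an arc is not homotopic to a portion of the boundary, $k\ge 1$. If $k=1$ then $P$ is the single triangle $P_1Q_1P_2$, and the only edges of $\cT$ incident to $Q_1$ are the two boundary segments, so $Q_1$ lies in exactly one triangle of $\cT$, producing a $1$ in $q_1$ at $Q_1$.

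If $k\ge 2$, the polygon $P$ has at least four vertices and its triangulation admits at least two ears. Any ear at a middle vertex $Q_i$ (with $1\le i\le k$) has both polygon-boundary edges equal to annulus boundary segments at $Q_i$; since every arc at $Q_i$ is trapped inside $P$ by $\gamma$ and $\cT$ is a triangulation, this forces $Q_i$ into a single triangle of $\cT$ and hence produces a $1$ at $Q_i$ in $q_1$. The only remaining candidate ears are those at $P_1$ or $P_2$, which respectively require the diagonals $Q_1P_2$ and $P_1Q_k$ of $P$. I will then observe that these two diagonals must cross inside $P$: $Q_1P_2$ separates $P_1$ from $Q_k$, while $P_1Q_k$ separates $P_2$ from $Q_1$, so they cannot coexist in any triangulation of $P$. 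Hence at most one of the two guaranteed ears sits at an endpoint of $\gamma$, and at least one sits at some $Q_i$, producing a $1$ in $q_1$ and contradicting (2).

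The main obstacle is the ear-combinatorics in the last step, in particular the crossing of the two critical diagonals and the care needed to handle the small cases ($k=1$ and $k=2$); the rest is a direct combination of Lemma~\ref{lm:bounding-exist}, Remark~\ref{partition}, and Lemma~\ref{lm:skeleton-tri}.
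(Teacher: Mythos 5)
Your proof is correct, and while the direction (1)$\Rightarrow$(2) matches the paper's (the paper just notes that $\cT=\cT^s$ means no peripheral arcs and hence no entries equal to $1$, leaving implicit the observation you spell out, that an entry $1$ forces an ear and hence a peripheral arc), your converse runs along a genuinely different route. The paper proves (2)$\Rightarrow$(1) by appealing to the construction of Proposition~\ref{prop:frieze-tria}: with no $1$'s, every marked point has at least one incident arc, so no peripheral arcs can occur. You instead argue the contrapositive purely combinatorially: a peripheral arc yields a bounding arc (Lemma~\ref{lm:bounding-exist}), which cuts off a triangulated polygon (Remark~\ref{partition}), and then the ear count of that polygon, together with the observation that ears at both endpoints of the bounding arc would require two crossing diagonals, forces an ear at an interior vertex $Q_i$ and hence a $1$ in the quiddity sequence. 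This buys a self-contained argument that does not lean on the reconstruction algorithm and makes explicit exactly where the $1$ appears, at the cost of some ear-and-diagonal bookkeeping; the paper's version is shorter but glosses over why "every marked point has an incident arc" rules out peripheral arcs. If you want to streamline your ear analysis, note that choosing a peripheral arc whose cut-off polygon is minimal (equivalently, an innermost one) immediately reduces you to the $k=1$ case, avoiding the two-ears and crossing-diagonals discussion altogether.
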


\begin{proof} 
Suppose $\cT=\cT^s$, then there are no peripheral arcs in the triangulation and hence all 
entries in $q_1$ and $q_2$ are $\ge 2$. 

Suppose conversely that 
\(q_1\) and \(q_2\) have no 1's. When we do the construction of Proposition \ref{prop:frieze-tria}, this 
means that every marked point on each boundary will have at least one arc incident to it. Hence there can 
be no peripheral arcs.
\end{proof}

%
\subsection{Quivers to quiddity sequences and back}\label{sec:quiv-quid}

In this section, we explain how any non-oriented cyclic quiver gives rise to a pair of quiddity sequences of an 
infinite periodic frieze and hence to a triangulation of an annulus. Let $Q$ be a non-oriented cyclic quiver, with vertices $1,2,\dots, n$ labeled anti-clockwise around the cycle. 
We associate a pair of quiddity sequences to $Q$ as follows. 

First observe that in any non-oriented cycle there is at least one source and at least one 
sink. Without loss of generality we can assume that the vertex $1$ is a source. 
To define the quiddity sequences we distinguish between {\em increasing arrows} 
(arrows $i\to i+1$, including $n\to 1$) 
and {\em decreasing arrows} (arrows $i\leftarrow i+1$, including $1\to n$) in $Q$. 
Let $p$ be the number of 
decreasing arrows in $Q$ and let $t=n-p$ be the number of increasing arrows in $Q$. 
We define an increasing path as a path of increasing arrows. It is maximal when it starts at a source and ends at a sink. Similarly, a decreasing path is a path of decreasing arrows. 


\begin{definition}[Quiddity sequences from a non-oriented 
 cyclic quiver
$Q$]\label{def:quiv-quid}
Let $Q$ be a non-oriented cyclic quiver with vertices $\{1,2,\dots,n\}$. Assume that vertex $1$ is a source. \\
Let $j_1=1$ and $\{j_2,\dots, j_p\}$ be the set of tails of the decreasing arrows 
in $Q$ for $1\le p<n$, with $1<j_2<\dots<j_p\leq n$. 
Let $c_1$ be the length of the maximal (increasing) linear path starting with 
$1\to 2$. For $k=2,\dots, p$, let $c_{j_k}\ge 0$ be the length of the maximal increasing linear path 
starting 
at $j_k$. Define:
$$\sigma(Q):=(a_1,\dots, a_p) \text{ where } a_k:=c_{j_k}+2, \text{ for } k=1,\dots, p.$$
Similarly, let $\{m_1,m_2,\dots, m_t\}$ be the set of heads of the increasing arrows in $Q$, $1\le t <n$, 
with $2=m_1<m_2< \cdots <m_t\le n$. For $k=1,\dots, t$, let $d_{j_k}\ge 0$ be the length of the maximal decreasing path ending at $m_{j_k}$. Define:
$$ \widetilde{\sigma}(Q):=(b_1,\dots, b_t) \text{ where } b_k:=d_{j_k}+2, \text{ for } k=1,\dots, p.$$
\end{definition}

Note that since we assume that vertex 1 is a source, $c_1 > 0$ and $d_{j_t} >0$. 

\begin{remark}\label{rem:quiv-quid}
Let $Q$ be a non-oriented cyclic quiver and consider $\sigma(Q)$ and $\widetilde{\sigma}(Q)$ from 
Definition~\ref{def:quiv-quid}. We have $a_j\ge 2$ for all $j$ and $b_i\ge 2$ for all $i$. Furthermore, $a_1>2$ and 
$b_{t}>2$. So by result~\cite[Corollary 2.2]{bpt}, both are quiddity sequences of infinite friezes. 
\end{remark}


\begin{example}
Consider the quiver from Figure~\ref{fig:quiv-quid}. Here, $\{j_1,\dots, j_p\}=\{1,4,6,7,8\}$ 
and $\{m_1,\dots, m_t\}=\{2,3,5,9\}$. From this, we get $c_{j_1} = c_1 =2$, $c_{j_2}=1$, $c_{j_3}=0$, 
$c_{j_4}=0$, $c_{j_5}=1$ and 
$d_{m_1}=0$, $d_{m_2}=1$, $d_{m_3}=3$, $d_{m_4}=1$. And so 
$\sigma(Q)=(4,3,2,2,3)$ and $\widetilde{\sigma}(Q)=(2,3,5,3)$.
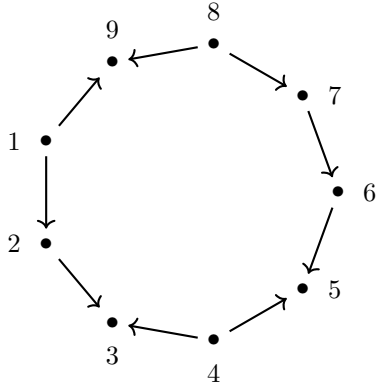
\begin{figure}
\begin{tikzpicture}
    \node[label=left:1] (1) at (160:2) {$\bullet$};
    \node[label=left:2] (2) at (200:2) {$\bullet$};
    \node[label=below:3] (3) at (240:2) {$\bullet$};
    \node[label=below:4] (4) at (280:2) {$\bullet$};
    \node[label=right:5] (5) at (3200:2) {$\bullet$};
    \node[label=right:6] (6) at (0:2) {$\bullet$};
    \node[label=right:7] (7) at (40:2) {$\bullet$};
    \node[label=above:8] (8) at (80:2) {$\bullet$};
    \node[label=above:9] (9) at (120:2) {$\bullet$};
    
    \draw[->, thick] (1) to (2);
    \draw[<-, thick]  (3) to (2);
    \draw[<-, thick]  (3) to (4);
    \draw[->, thick]  (4) to (5);
    \draw[<-, thick]  (5) to (6);
    \draw[->, thick]  (7) to (6);
    \draw[->, thick]  (8) to (7);
    \draw[->, thick]  (8) to (9);
    \draw[->, thick]  (1) to (9);
  \end{tikzpicture}
\caption{A non-oriented cycle on 9 vertices.}\label{fig:quiv-quid}
\end{figure}
\end{example}

\begin{lemma} \label{lem:head-or-tail} 
Each vertex in a non-oriented cyclic quiver is either the head of an increasing arrow or the tail of a decreasing arrow (it cannot be both). 
\end{lemma}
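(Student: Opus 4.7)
The plan is to focus on the single edge incident to $j$ on its ``counter-clockwise side'' and observe that each of the two alternatives in the lemma refers to the orientation of that same edge, hence they are mutually exclusive and exhaust the possibilities.

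More precisely, the cyclic quiver $Q$ has vertex set $\{1,\dots,n\}$ and exactly two edges at each vertex $j$, namely the edges $\{j-1,j\}$ and $\{j,j+1\}$ with indices read modulo $n$. I would consider the edge $e_j := \{j-1,j\}$ (where for $j=1$ this is the edge $\{n,1\}$). Since $e_j$ is an arrow of $Q$, it has a unique orientation, giving two mutually exclusive cases: either $e_j$ is oriented $j-1 \to j$, or $e_j$ is oriented $j \to j-1$.

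In the first case, $e_j$ is an increasing arrow in the sense of the definition preceding the lemma (an arrow of the form $i \to i+1$, including $n \to 1$ when $j=1$), and $j$ is its head. In the second case, $e_j$ is a decreasing arrow (of the form $i+1 \to i$, including $1 \to n$ when $j = 1$), and $j$ is its tail. Thus $j$ is either the head of an increasing arrow or the tail of a decreasing arrow.

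To finish, I would show that these two statements only involve the edge $e_j$ and not the other incident edge $\{j,j+1\}$: an increasing arrow with head $j$ must be of the form $(j-1) \to j$ (since arrows run between consecutive vertices and an increasing arrow points from a lower to a higher label modulo $n$), so it is necessarily $e_j$; similarly a decreasing arrow with tail $j$ must be $j \to (j-1)$, again $e_j$. Hence ``head of an increasing arrow'' and ``tail of a decreasing arrow'' correspond to the two possible orientations of $e_j$, and cannot occur simultaneously. The argument involves no real obstacle beyond carefully handling the cyclic indexing at $j=1$, which is covered by the convention that $n \to 1$ is counted as increasing and $1 \to n$ as decreasing.
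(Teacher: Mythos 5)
Your proof is correct. It takes a slightly different route from the paper: you observe that both properties in the statement refer to the orientation of the single edge $e_j=\{j-1,j\}$ (an increasing arrow with head $j$ can only be $j-1\to j$, a decreasing arrow with tail $j$ can only be $j\to j-1$), so exactly one of the two holds because that edge carries a unique orientation. The paper instead argues by cases on the local configuration at the vertex (sink, source, or neither, i.e.\ $k-1\to k\to k+1$ or $k-1\leftarrow k\leftarrow k+1$) to get existence, and then rules out ``both'' by noting it would force a $2$-cycle, contradicting that $Q$ is an orientation of a cycle. Your single-edge reduction is a bit more economical and yields the exclusivity and the existence simultaneously, while the paper's case analysis makes the relevant local shapes explicit (which is the picture used later when reading off the paths $P_k$); both rest on the same hypothesis that consecutive vertices are joined by exactly one arrow, which you invoke when asserting $e_j$ has a unique orientation and the paper invokes via the no-$2$-cycle remark.
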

\begin{proof}
If a vertex $k$ in a non-oriented cyclic quiver $Q$ is a sink or a source, then it is clear that it is the head of an increasing arrow or the tail of a decreasing arrow (respectively). If the vertex $k$ is neither a sink nor a source, then it is one of the following: \[ k-1 \longrightarrow k \longrightarrow k+1 \]
\[ k-1 \longleftarrow k \longleftarrow k+1 \]
In the first case $k$ is the head of an increasing arrow, and in the second case, it is the tail of a decreasing arrow. 
To see that it cannot be both, notice that if it is both, we get a two-cycle at that vertex. This contradicts the assumption that $Q$ is a non-oriented cyclic quiver.
\end{proof}

\begin{lemma}\label{lm:properties-sigma}
Let $Q$ be a non-oriented cyclic quiver with vertices $\{1,2,\dots,n\}$. Assume that vertex $1$ is a source.
Let $\sigma(Q)=(a_1,\dots, a_p)$ and $\widetilde{\sigma}(Q)=(b_1,\dots, b_t)$ be as in Definition \ref{def:quiv-quid}. Then:
\begin{enumerate}
\item 
$\sigma(Q)$
and $\widetilde{\sigma}(Q)$
are skeletal, non-trivial quiddity sequences, i.e. $a_i>1$ and $b_j> 1$ for all $i,j$ and not all elements of \(\sigma(Q)\) and \(\widetilde{\sigma}(Q)\) are equal to $2$,
\item 
$\sum a_i + \sum b_j=3n.$
\end{enumerate}
\end{lemma}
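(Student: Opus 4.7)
The plan is to verify both statements directly from Definition \ref{def:quiv-quid}, using Lemma \ref{lem:head-or-tail} to keep track of the vertices.

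For part (1), that all $a_i \geq 2$ and $b_j \geq 2$ is immediate from the definitions $a_k = c_{j_k}+2$ and $b_k = d_{m_k}+2$ with $c_{j_k}, d_{m_k} \geq 0$. To show the sequences are non-trivial (not all 2's), and hence skeletal: since vertex $1$ is a source, the arrow $1 \to 2$ is increasing, so $c_1 \geq 1$ and $a_1 \geq 3$. For $\widetilde\sigma(Q)$, I will argue $b_t \geq 3$ by cases on $m_t$. If $m_t = n$, then the decreasing arrow $1 \to n$ (which exists because $1$ is a source) ends at $n$, so $d_{m_t} \geq 1$. If $m_t < n$, then by maximality of $m_t$ the vertex $m_t + 1$ is not the head of an increasing arrow, and by Lemma \ref{lem:head-or-tail} it is therefore the tail of a decreasing arrow, which must be $m_t+1 \to m_t$, again giving $d_{m_t} \geq 1$. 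The claim that $\sigma(Q)$ and $\widetilde\sigma(Q)$ are actually quiddity sequences of infinite periodic friezes then follows from \cite[Corollary 2.2]{bpt}, since they are $\geq 2$ entrywise and not identically 2.

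For part (2), I unfold the sums:
\[
\sum_k a_k = \sum_k c_{j_k} + 2p, \qquad \sum_k b_k = \sum_k d_{m_k} + 2t,
\]
where $p+t = n$. The key combinatorial identity is $\sum_k c_{j_k} = t$ and $\sum_k d_{m_k} = p$. For the first, I will argue that every increasing arrow belongs to exactly one maximal increasing path, and that such a path must start at a vertex with no incoming increasing arrow. By Lemma \ref{lem:head-or-tail}, these are exactly the tails of decreasing arrows, i.e., the vertices $j_1, \ldots, j_p$. Hence the paths counted by $c_{j_1}, \ldots, c_{j_p}$ partition the set of increasing arrows, giving $\sum c_{j_k} = t$. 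The symmetric argument gives $\sum d_{m_k} = p$. Summing,
\[
\sum a_i + \sum b_j = (t + 2p) + (p + 2t) = 3(p+t) = 3n.
\]

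The main obstacle, and the only non-routine piece, is cleanly justifying that the maximal increasing paths starting at the $j_k$'s account for \emph{every} increasing arrow without overlap. This is where Lemma \ref{lem:head-or-tail} does the essential work: it precisely characterizes the set of possible starting vertices of maximal increasing paths as $\{j_1,\dots,j_p\}$. Once this bijection between increasing arrows and summands of $\sum c_{j_k}$ (and dually for decreasing arrows) is in place, the rest is bookkeeping.
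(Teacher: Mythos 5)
Your proposal is correct and follows essentially the same route as the paper: entries are at least $2$ by construction, non-triviality comes from the presence of both increasing and decreasing arrows (forced by vertex $1$ being a source and $Q$ not being oriented), and the sum formula comes from counting each of the $n$ arrows once in $\sum c_{j_k}+\sum d_{m_k}$ plus twice via the ``$+2$'' in each of the $p+t=n$ entries. The only difference is that you spell out the partition of the increasing (resp.\ decreasing) arrows into maximal paths via Lemma~\ref{lem:head-or-tail} and the case analysis for $b_t\ge 3$, details the paper asserts briefly here and only elaborates in the proof of Theorem~\ref{thm:quiver to quiddity}.
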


\begin{proof}
By Remark~\ref{rem:quiv-quid}, $\sigma(Q)=(a_1,\dots, a_p)$ and $\widetilde{\sigma}(Q)=(b_1,\dots, b_t)$
are quiddity sequences giving rise to infinite periodic friezes. 
By construction, the $a_i$ and the $b_j$ are all greater than or equal to $2$. Since $Q$ is not cyclic, 
at least one $c_{j_k}$ and at least one $d_{j_k}$ is positive, so (1) holds. 

To see that (2) holds: 
The $c_{j_i}$ count the decreasing arrows and the $d_{m_k}$ the increasing arrows in $Q$.
Since we add $2$ for every arrow to get $a_i$ and $b_j$, the total sum is $3n$. 
\end{proof}

\begin{definition}[A non-oriented cyclic quiver from a quiddity sequence]\label{def:quidd-cycle}
Let \[q=(a_{j_1},2^{(k_1)},a_{j_2},2^{(k_2)},\dots, a_{j_r},2^{(k_r)})\] be a skeletal quiddity sequence
where $a_{j_i}> 2$ and $k_i\ge 0$ 
for all $i=1, \ldots, r$. 
To $q$, we associate a cyclic quiver $Q=\mu(q)$ as follows: 
Starting with a source at vertex $1$, $Q$ has $a_{j_1} - 2$ increasing arrows, then a sink, 
then $k_1+1$ decreasing arrows, then a source, followed by $a_{j_2} -2$ increasing arrows, 
another sink, then $k_2+1$ decreasing arrows, etc. This ends with 
$k_r+1$ decreasing arrows, the last of them being $n\leftarrow 1$. 
\end{definition}

\begin{theorem} \label{thm:quiver to quiddity}
Let $n\ge 2$. There is a bijection between the following classes.
\begin{enumerate}
\item
Unlabeled non-oriented cyclic quivers with $n$ vertices.
\item
Skeletal 
quiddity sequences $q=(a_1,\dots, a_p)$ with \[n=p + \sum_{k=1}^p(a_k-2)\] for some 
$1\le p<n$. 
\end{enumerate}
\end{theorem}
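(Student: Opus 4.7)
The plan is to exhibit mutually inverse maps between the two classes, given by $\sigma$ of Definition~\ref{def:quiv-quid} and $\mu$ of Definition~\ref{def:quidd-cycle}.

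First I would confirm that $\sigma$ lands in class (2): by Lemma~\ref{lm:properties-sigma}, $\sigma(Q)=(a_1,\ldots,a_p)$ is a non-trivial skeletal quiddity sequence, and since $Q$ has $p$ decreasing arrows and $\sum_{k=1}^p c_{j_k}=\sum_{k=1}^p(a_k-2)$ increasing arrows, the constraint $n=p+\sum_{k=1}^p(a_k-2)$ holds. Conversely, for $\mu$ I would check that $\mu(q)$ is a non-oriented cyclic quiver on $n$ vertices: counting arrows yields $\sum_i(a_{j_i}-2)+\sum_i(k_i+1)=\sum_{k=1}^p(a_k-2)+p=n$, and the presence of both sources and sinks follows from the skeletal assumption $q\neq(2,\ldots,2)$. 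For well-definedness on equivalence classes, I would observe that choosing a different source as vertex $1$ in $\sigma$ produces a cyclic rotation of the quiddity sequence, and that the labeling used in $\mu$ is a cosmetic choice invisible to the unlabeled quiver.

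The bulk of the work is verifying that the two compositions are the identity. For $\sigma(\mu(q))=q$, I would trace through $\mu(q)$ explicitly: starting at the source $s_1=1$, the construction produces $a_{j_1}-2$ increasing arrows ending at a sink, then $k_1+1$ decreasing arrows whose tails are located at consecutive vertices ending at the next source $s_2$, and so on. Reading off $\sigma$ of this quiver, the tails that are sources contribute the entries $a_{j_i}$ (since $c_{s_i}=a_{j_i}-2$), while each of the $k_i$ interior tails in the $i$-th decreasing run contributes an entry $2$ (since $c=0$ there), recovering $q$ in order. For $\mu(\sigma(Q))=Q$, I would observe that $\sigma(Q)$ encodes the alternating pattern of maximal increasing and decreasing runs of $Q$: the entries $>2$ record the lengths of the maximal increasing runs emanating from sources, and the consecutive $2$'s between them count the interior vertices of the decreasing runs. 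Feeding this back into $\mu$ reproduces the same alternating pattern, hence the same unlabeled quiver.

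The main obstacle is the careful bookkeeping in matching tails of decreasing arrows with entries of the quiddity sequence, in particular verifying that the intervening $2$'s (coming from interior tails of decreasing runs) are counted correctly against the $k_i+1$ decreasing arrows between consecutive sources, and that the cyclic wrap-around at the last source back to vertex $1$ is handled consistently under the chosen labeling convention.
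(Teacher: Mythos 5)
Your proposal is correct and takes essentially the same route as the paper: the published proof likewise exhibits the maps $\sigma$ and $\mu$ of Definitions~\ref{def:quiv-quid} and~\ref{def:quidd-cycle} as mutually inverse, checking that $\sigma(Q)$ lies in class (2) by counting vertices along maximal increasing paths and then arguing $\mu(\sigma(Q))=Q$ by induction on the length of the quiddity sequence. If anything, your plan is slightly more complete, since you also spell out the composition $\sigma(\mu(q))=q$ and the well-definedness up to cyclic rotation, points the paper leaves implicit.
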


\begin{proof}
We use the maps $\sigma$ and $\mu$ to get the bijection. 
The map $\sigma$ associates to every non-oriented cyclic quiver $Q$ a 
skeletal quiddity sequence. 
We show that the image $\sigma(Q)$ is of the form described in (2). 
Assume that $Q$ has a source at vertex 1 and consider $\sigma(Q)=(a_1,\dots, a_p)$. 
Recall from Definition~\ref{def:quiv-quid} that $a_k=c_{j_k}+2$, where 
$c_{j_k}$ counts the length of the maximal increasing path starting at $j_k$, 
$a_i\ge 2$ for all $i$, and $a_1>2$. 
So $(a_1,\dots, a_p)$ is a skeletal non-trivial quiddity sequence. Also, $a_k=c_{j_k}+2$ implies that the right hand side of the equality in (2) becomes:
\[
p + \sum_{k=1}^p (a_k-2) = p +\sum_{k=1}^p c_{j_k}. 
\] 
Let us denote the maximal increasing path starting at $j_k$ by $P_k$ 
(this can be of length 0). All these paths are disjoint (they do not share vertices). By Lemma~ \ref{lem:head-or-tail}, each vertex in $Q$ is either the head of an increasing arrow or the tail of a decreasing arrow (it cannot be both). If it is the tail 
of a decreasing arrow, then it is the tail of some $P_k$. If it is the head of an increasing arrow, then it is either 
the head of a path $P_k$ or an internal vertex of $P_k$, for some $k$. In any case, the vertex belongs to a 
maximal increasing path. This implies that each vertex of $Q$ appears exactly once in one $P_k$. The number of vertices in $P_k$ is $c_{j_k}+1$. Hence we have 
\[\sum_{k=1}^p (c_{j_k}+1)= p + \sum_{k=1}^p c_{j_k} = n.\]

Now we argue that $\mu(\sigma(Q))=Q$. Let $Q$ be a non-oriented cyclic quiver described as in the beginning of Section 3.2 such that $\{ j_1=1, j_2,\dots, j_p\}$ is the set of tails of its decreasing arrows. Then $\sigma(Q)=(a_1,\dots, a_p)$ is the quiddity sequence from Definition~\ref{def:quiv-quid}.

To prove that $\mu(\sigma(Q))$ has $n$ vertices, we use induction on the length of the quiddity sequence. Suppose $\sigma(Q)=(a_1)$, where $a_1 >2$. We claim that $n=a_1-1$. The quiver $\mu(\sigma(Q))$ has a source at vertex 1, then an increasing path of length $a_1-2$, then a sink and a decreasing arrow from 1 to that sink. So the number of vertices of $\mu(\sigma(Q))$ is $a_1-1$. This is shown in Figure~\ref{Basecase}.

\begin{figure}[h]
\begin{tikzpicture}[scale=.6]
    \node[shape=circle,fill=black, scale=0.4,label=left:1] (1) at (180:2) {};
    \node[shape=circle,fill=black, scale=0.4,label=below:2] (2) at (240:2) {};
    \node[shape=circle,fill=black, scale=0.4,label=below:3] (3) at (300:2) {};
    \node[shape=circle,fill=black, scale=0.4] (4) at (0:2) {};
    \node[shape=circle,fill=black, scale=0.4] (5) at (60:2) {};
    \node[shape=circle,fill=black, scale=0.4,label=above:$a_1-1$] (6) at (120:2) {};
    
    \draw[->, thick] (1) to (2);
    \draw[<-, thick]  (3) to (2);
    \draw[->, thick, dotted]  (3) to (4);
    \draw[->, thick, dotted]  (4) to (5);
    \draw[->, thick]  (5) to (6);
    \draw[->, thick]  (1) to (6);
  \end{tikzpicture}
  \caption{The quiver $\mu(\sigma(Q))$ where $\sigma(Q)=(a_1)$.}
  \label{Basecase}
  \end{figure}
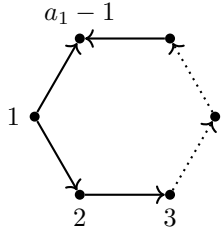
  
Let $\sigma(Q)=(a_1,\dots, a_{\ell+1})$. By induction, the quiddity sequence $q=(a_1,\dots, a_{\ell})$ is such that $\mu(q)$ is a quiver with $n$ vertices fulfilling \[n=\ell + \sum_{k=1}^{\ell}(a_k-2).\] If $a_{\ell+1}=2$, then the algorithm in Def~\ref{def:quidd-cycle} adds one decreasing arrow and hence one more vertex to the quiver $\mu(q)$. That means the total number of vertices is $n+1$, which is indeed the case since $a_{\ell+1}-2=0$ and
\[\ell +1 + \sum_{k=1}^{\ell+1}(a_k-2) =\ell +1 + \sum_{k=1}^{\ell}(a_k-2) = n +1. \]
If $a_{\ell+1} \geq 3$, then by Def~\ref{def:quidd-cycle}, we must add $a_{\ell+1}-2$ increasing arrows and a decreasing arrow from vertex 1 to the last vertex. This implies that we add $a_{\ell+1}-2$ vertices for the increasing arrows and one more for the decreasing arrow. 
 
\end{proof}

\begin{corollary} \label{cor:quiddity to triangulation}
Let $n\ge 2$. There is a bijection between the following classes.
\begin{enumerate}
\item
Skeletal 
quiddity sequences $q=(a_1,\dots, a_p)$ with $n=p + \sum_k (a_k-2)$, for some 
$1\le p<n$. 
\item
Skeletal triangulations of annuli $C_{m,n-m}$ with $1\le m< n$. 
\end{enumerate}
\end{corollary}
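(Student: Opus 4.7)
The plan is to leverage Theorem~\ref{thm:quiver to quiddity}, which already provides a bijection between unlabeled non-oriented cyclic quivers on $n$ vertices and the class~(1) of skeletal quiddity sequences. It therefore suffices to exhibit a bijection between unlabeled non-oriented cyclic quivers on $n$ vertices and class~(2), and compose.

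In the direction from triangulations to quivers, I would send a skeletal triangulation $\cT$ of $C_{m,n-m}$ to its associated quiver $Q_\cT$. By Lemma~\ref{lm:skeleton-tri}(2), every arc of $\cT$ is bridging, and a standard Euler-characteristic computation shows that any triangulation of $C_{m,n-m}$ has exactly $m+(n-m)=n$ internal arcs, so $Q_\cT$ has $n$ vertices. Lemma~\ref{lm:skeletal-property-1} then forces $Q_\cT$ to be a non-oriented cyclic quiver on $n$ vertices.

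For the reverse direction, given a non-oriented cyclic quiver $Q$ on $n$ vertices, I would first apply the map $\sigma$ of Definition~\ref{def:quiv-quid} to obtain a skeletal quiddity sequence $q=\sigma(Q)=(a_1,\dots,a_p)$ satisfying $n=p+\sum_k(a_k-2)$ by Theorem~\ref{thm:quiver to quiddity}. Then Proposition~\ref{prop:frieze-tria} yields a skeletal triangulation $\cT$ (unique up to rotation of the inner boundary) of an annulus $C_{p,k}$ whose outer boundary realises $q$. To identify $k$, I would write $q=(a_{i_1},2^{(k_1)},\dots,a_{i_r},2^{(k_r)})$ with $a_{i_j}>2$, apply Corollary~\ref{cor:partner-quiddity} to read off that the inner quiddity sequence has length $\sum_j(a_{i_j}-3)+r$, and then compute, using $p=r+\sum_j k_j$, that this length equals $\sum_j a_{i_j}-2r=\sum_k(a_k-2)=n-p$. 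Hence $\cT$ triangulates $C_{p,n-p}$, so $m=p$.

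The hard part will be verifying that the two maps are mutual inverses. Starting from $Q$, forming $\cT$ via Proposition~\ref{prop:frieze-tria}, and reading off $Q_\cT$, one must check $Q_\cT\cong Q$; starting from $\cT$ and forming $\sigma(Q_\cT)$, one must check that it agrees with the outer-boundary quiddity sequence of $\cT$. Both reduce to a local analysis at each marked point on the outer boundary: the arrows of $Q_\cT$ are determined by the anti-clockwise rotation order of arcs at shared endpoints, while $\sigma$ encodes lengths of maximal monotone paths, which correspond precisely to the fans of bridging arcs between a source and the next sink (and vice versa). Tracing through the construction in Proposition~\ref{prop:frieze-tria}, each fan of $a_k-1$ arcs at a marked point of quiddity $a_k$ produces exactly $a_k-2$ arrows of one monotonicity in $Q_\cT$, matching $c_{j_k}=a_k-2$ in Definition~\ref{def:quiv-quid}. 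The exclusion of the trivial quiddity sequence on class~(1) matches the implicit exclusion of spiraling/degenerate triangulations on class~(2) (Remark~\ref{rem:no-arithm}), so no genuine objects are lost in either direction.
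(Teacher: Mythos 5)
Your proposal is correct and follows essentially the same route as the paper: the paper's (very terse) proof likewise composes the bijection of Theorem~\ref{thm:quiver to quiddity} between class~(1) and non-oriented cyclic quivers on $n$ vertices with the identification, via Lemma~\ref{lm:skeletal-property-1}, of such quivers with skeletal triangulations. The extra detail you supply (the arc count, the inner-boundary count via Corollary~\ref{cor:partner-quiddity}, and the sketch that the maps are mutually inverse) only makes explicit what the paper leaves implicit in Proposition~\ref{prop:frieze-tria}.
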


\begin{proof}
By Theorem~\ref{thm:quiver to quiddity}, (1) is equivalent to the associated quiver being non-oriented cycle 
and (by Lemma ~\ref{lm:skeletal-property-1}) these in turn correspond to triangulations given by bridging arcs, i.e. skeletal 
triangulations. 
\end{proof}


\begin{theorem}\label{thm:unique-frieze} Let $\cF^s_1$ be the skeletal frieze of an infinite periodic frieze $\cF_1$. Then
\begin{enumerate}
\item $\cF^s_1$ uniquely determines an infinite skeletal frieze $\cF^s_2$ such that the two 
form a pair of infinite periodic friezes associated to a triangulation of an annulus,
\item 
$\cF^s_1$ gives rise to an infinite family of infinite periodic friezes $\cF_2$ for each of which 
$(\cF^s_1,\cF_2)$ is a pair of infinite periodic friezes arising from a triangulation of an annulus.
\end{enumerate}
\end{theorem}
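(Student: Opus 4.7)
The plan is to assemble the theorem directly from the machinery built up in Section~\ref{sec:pairs}, essentially reading off part (1) from Proposition~\ref{prop:frieze-tria} and Corollary~\ref{cor:partner-quiddity}, and constructing part (2) by re-attaching ears to the skeletal triangulation obtained in part (1).

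For part (1), I start with the skeletal quiddity sequence $q_1$ of $\cF^s_1$ (the trivial quiddity sequence $(2,2,\dots,2)$ is excluded by Remark~\ref{rem:no-arithm}). By Proposition~\ref{prop:frieze-tria} this determines a skeletal triangulation $\cT^s$ of some annulus $C_{m,n}$, unique up to rotation of the inner boundary, whose outer boundary carries the quiddity sequence $q_1$. The inner boundary of $\cT^s$ then produces a quiddity sequence $q_2$, which is given explicitly by Corollary~\ref{cor:partner-quiddity}; note that $q_2$ is independent of the rotational choice in Proposition~\ref{prop:frieze-tria} because rotation only permutes the entries of $q_2$ cyclically, and we identify quiddity sequences up to cyclic equivalence. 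Setting $\cF^s_2$ to be the infinite periodic frieze with quiddity sequence $q_2$, Lemma~\ref{lem:skeletal-property-2} guarantees that $\cF^s_2$ is skeletal (both $q_1$ and $q_2$ have no $1$'s since $\cT^s$ has no peripheral arcs), and by Definition~\ref{def:triang-frieze} the pair $(\cF^s_1, \cF^s_2)$ is exactly the pair of friezes arising from $\cT^s$. Uniqueness of $\cF^s_2$ follows from the essential uniqueness of $\cT^s$ together with the fact that $q_2$ is determined by the formula of Corollary~\ref{cor:partner-quiddity}.

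For part (2), I use the skeletal triangulation $\cT^s$ from part (1) as a starting point and exhibit infinitely many triangulations $\cT$ of annuli (with possibly different numbers of marked points on the inner boundary) whose associated frieze on the outer boundary is exactly $\cF^s_1$ (not merely a frieze whose skeleton is $\cF^s_1$). To do this, I leave the outer boundary of $\cT^s$ untouched, so that the outer quiddity sequence remains $q_1$, and attach arbitrary nonempty configurations of peripheral triangles on the inner boundary, creating new marked points there. This yields triangulations $\cT$ with $\cT^s$ as their skeleton by Definition~\ref{s_T}, and by Proposition~\ref{skeleton} the pair of friezes $f(\cT)$ satisfies $s_F(f(\cT)) = f(\cT^s) = (\cF^s_1, \cF^s_2)$; in particular the outer frieze of $\cT$ is $\cF^s_1$ itself while the inner frieze $\cF_2$ is an infinite periodic frieze whose skeleton is $\cF^s_2$. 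Since the number of ears we can attach (and how we nest them) is unbounded, this family is infinite.

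The main obstacle is the uniqueness clause in part (1): I need to rule out the possibility that two different skeletal friezes $\cF^s_2$ and $(\cF^s_2)'$ could both pair with $\cF^s_1$ via a triangulation. This is where I lean on Proposition~\ref{prop:frieze-tria} decisively; if $(\cF^s_1, \cF^s_2)$ and $(\cF^s_1, (\cF^s_2)')$ both come from skeletal triangulations, then both triangulations have outer quiddity sequence $q_1$ and hence coincide up to rotation of the inner boundary by Proposition~\ref{prop:frieze-tria}, forcing $\cF^s_2 = (\cF^s_2)'$ since quiddity sequences are considered up to cyclic equivalence. A secondary subtlety to check is that the phrase ``pair of infinite periodic friezes associated to a triangulation of an annulus'' in the statement really refers to a skeletal triangulation in part (1); this is justified because the two boundaries of a triangulation $\cT$ that are skeletal on both sides force $\cT = \cT^s$ by Lemma~\ref{lem:skeletal-property-2}.
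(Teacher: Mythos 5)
Your argument is correct, and its overall shape matches the paper's: both proofs reduce part (1) to the essential uniqueness of the skeletal triangulation determined by the quiddity sequence $q_1$ of $\cF^s_1$, and both obtain part (2) by ``un-reducing'' in infinitely many ways. The differences are in which intermediate results carry the load. For (1) the paper cites Corollary~\ref{cor:quiddity to triangulation} (uniqueness via the bijection with non-oriented cyclic quivers), whereas you invoke Proposition~\ref{prop:frieze-tria} directly together with the explicit formula of Corollary~\ref{cor:partner-quiddity} and Lemma~\ref{lem:skeletal-property-2}; these are interchangeable here, and your extra remarks (rotation of the inner boundary only permutes $q_2$ cyclically; a pair of skeletal friezes forces the triangulation itself to be skeletal) make explicit two points the paper's one-line proof leaves tacit. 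For (2) the paper works on the frieze side, applying reverse reduction (Remark~\ref{rmk:reverse reduction}) to $q^s_2$ repeatedly, while you work on the triangulation side, gluing ears onto the inner boundary of $\cT^s$; by Remark~\ref{rem:reduce-triangles} these are the same operation seen through the triangulation--frieze correspondence, but your version has the advantage of making it explicit that each resulting pair $(\cF^s_1,\cF_2)$ really does arise from a triangulation of an annulus, which the paper's phrasing leaves implicit. One small attribution slip: Proposition~\ref{skeleton} only tells you that the skeleton of the outer frieze of $\cT$ is $\cF^s_1$; the fact that the outer frieze is $\cF^s_1$ itself comes from your separate (and correct) observation that attaching ears on the inner boundary leaves the triangle counts at the outer marked points unchanged, so the conclusion stands as you argued it.
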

\begin{proof} $\qquad$
\begin{enumerate}
\item 
Let $q^s_1$ be the quiddity sequence of $\cF^s_1$. Then $q^s_1$ is a skeletal quiddity sequence. 
By Corollary~\ref{cor:quiddity to triangulation}, the quiddity sequence $q^s_1$ gives rise to a 
unique skeletal triangulation $\cT^s$ of a marked annulus. Then $\cT^s$ uniquely gives rise to a 
pair of infinite skeletal friezes one of which is $\cF^s_1$ and the other is $\cF^s_2$.
\item 
By Remark~\ref{rmk:reverse reduction}, we can apply reverse reduction process on $q^s_2$ repeatedly 
to obtain infinitely many friezes $\cF_2$.
\end{enumerate}
\end{proof}

%
\section{Growth coefficients}\label{sec:growth}

The entries in any infinite (periodic) frieze continue to grow. In fact, there is an invariant of periodic friezes 
which governs their growth, called the growth coefficient. It is known by \cite[Theorem 3.4]{bfpt} that the pair of infinite 
friezes associated with a triangulation have the same growth coefficient. In this section, we calculate 
the growth coefficients in terms of quiddity sequences and study a module theoretic interpretation of them.

%
\subsection{Properties of growth coefficients}

Let $\mathcal F$ be an infinite periodic frieze of (minimal) 
period $n$, i.e. $\mathcal F$ is $n$-periodic but there is no $n'<n$ such that 
$\mathcal F$ is $n'$-periodic. 
Set $s_r:=a_{1,rn}-a_{2,rn-1}$ for any $r>0$ (we follow the notation from Figure \ref{infinitefrieze}, see also Figure~\ref{fig:infinitefrieze}). 
We define the \emph{growth coefficient of $\cF$} as $s_{\mathcal F}:=s_1$. Recall that the first 
non-trivial row of a frieze is the quiddity row and we count rows onwards from there.

\begin{figure}[ht!]
\begin{center}
\begin{tikzpicture}[scale=.8, inner sep=4pt]
\node (00) at (-6,6) {$\ldots$};
\node (01) at (-4,6) {0};
\node (02) at (-2,6) {0};
\node (ai+) at (0,6) {$\ldots$};
\node (03) at (2,6) {0};
\node (04) at (4,6) {0};
\node (05) at (6,6) {$\ldots$};
\node (10) at (-7,5) {$\ldots$};
\node (11) at (-5,5) {1};
\node (12) at (-3,5) {1};
\node (ai+) at (0,5) {$\ldots$};
\node (12) at (-1,5) {1};
\node (12) at (1,5) {1};
\node (14) at (3,5) {1};
\node (15) at (5,5) {1};
\node (16) at (7,5) {$\ldots$};

\node (a-2) at (-6,4) {$\ldots$};
\node (a-1) at (-4,4) {\(a_{i}\)};
\node (a0) at (-2,4) {\(a_{i+1}\)};
\node (a1) at (0,4) {$\hdots$};
\node (a2) at (2,4) {\(a_{rn-1}\)};
\node (a3) at (4,4) {\(a_{rn}\)};
\node (a4) at (6,4) {$\ldots$};

\node (10) at (-5,3) {$\ldots$};
\node (12) at (-3,3) {$\ddots$};
\node (12) at (-1,3) {$\ddots$};
\node (12) at (1,3) {\reflectbox{$\ddots$}};
\node (14) at (3,3) {\reflectbox{$\ddots$}};
\node (16) at (5,3) {$\ldots$};

\node (ai+) at (-4,2) {$\ldots$};
\node (ai+) at (-2,2) {\(a_{i,rn-2}\)};
\node (ai+) at (0,2) {\(a_{i+1,rn-1}\)};
\node (aj-) at (2,2) {\(a_{i+2,rn}\)};
\node (ai+) at (4,2) {$\ldots$};

\node (ai+) at (-3,1) {$\hdots$};
\node (aj-) at (-1,1) {\(a_{i,rn-1}\)};
\node (aj) at (1,1) {\(a_{i+1,rn}\)};
\node (aj) at (3,1) {$\hdots$};

\node (ai) at (-2,0) {$\hdots$};
\node (ai+) at (0,0) {\(a_{i,rn}\)};
\node (ai+) at (2,0) {$\ldots$};

\end{tikzpicture}
\end{center}
\caption{Indexing of an infinite periodic frieze, see also Figure \ref{infinitefrieze}.}
\label{fig:infinitefrieze}
\end{figure}

\begin{proposition}\label{prop:growth}
Let $\mathcal F$ be an infinite periodic frieze of (minimal) period n and let $s_r$ and 
$s_{\mathcal F}$ be as in the paragraph above, set $s_0=2$. Then the following holds: 
\begin{enumerate}
\item 
$a_{i,rn+i-1} - a_{i-1,rn+i-2}=s_r$ for all $i\in\mathbb{Z}$; 
\item
$s_{r+1}=s_1s_{r}-s_{r-1}$ for $r>0$;
\item 
$\displaystyle{s_r = s_{\mathcal F}^r+r\sum_{l=1}^{\lfloor r/2\rfloor}(-1)^l\frac{1}{r-l}{r-l\choose l}s_{\mathcal F}^{r-2l}}$
for $r>0$.
\end{enumerate}
\end{proposition}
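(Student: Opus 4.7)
My plan is to realize each entry $a_{i,j}$ of the frieze as a continuant, equivalently as an entry of a product of $2\times 2$ matrices. Setting $M_a := \left(\begin{smallmatrix} a & -1 \\ 1 & 0 \end{smallmatrix}\right)$, an induction on $j-i$ using the diamond rule shows
\[
M_{a_i}M_{a_{i+1}}\cdots M_{a_j} \;=\; \begin{pmatrix} a_{i,j} & -a_{i,j-1} \\ a_{i+1,j} & -a_{i+1,j-1} \end{pmatrix},
\]
so that $\tr(M_{a_i}\cdots M_{a_j}) = a_{i,j}-a_{i+1,j-1}$. Since $\det M_a = 1$ for every $a$, every such matrix product has determinant $1$.

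For~\textup{(1)}, I would take $j=rn+i-1$. By $n$-periodicity of the quiddity sequence,
\[
M_{a_i}M_{a_{i+1}}\cdots M_{a_{rn+i-1}} \;=\; \bigl(M_{a_i}\cdots M_{a_{n+i-1}}\bigr)^{r},
\]
and the one-period matrix $M_{a_i}\cdots M_{a_{n+i-1}}$ is conjugate to $M:=M_{a_1}\cdots M_{a_n}$ via $(M_{a_1}\cdots M_{a_{i-1}})^{-1}$, by rewriting $M_{a_1}\cdots M_{a_{n+i-1}}$ in two ways using periodicity. Hence the $r$-th powers are conjugate and share the trace $s_r$, which gives the claim (with the appropriate sign convention on which vertical neighbor of a row-$rn$ entry is paired with it).

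For~\textup{(2)}, $M$ satisfies $\det M=1$, so Cayley--Hamilton yields $M^2 = s_1 M - I$, and therefore $M^{r+1} = s_1 M^r - M^{r-1}$ for every $r\geq 1$. Taking traces gives the recursion $s_{r+1} = s_1 s_r - s_{r-1}$, with $s_0 = \tr(I) = 2$ as the base case, as prescribed.

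For~\textup{(3)}, the linear recursion of~\textup{(2)} with initial conditions $s_0=2$ and $s_1 = s_{\mathcal F}$ is exactly the defining recursion for $2T_r(s_{\mathcal F}/2)$, where $T_r$ is the Chebyshev polynomial of the first kind. The displayed closed form is then the standard explicit expansion of $T_r$ after this normalization; alternatively, one may verify it by a direct induction on $r$ using~\textup{(2)} together with a Pascal-type identity among the binomial coefficients $\binom{r-l}{l}$. The main conceptual step is the matrix realization used in~\textup{(1)} -- once that is in place, \textup{(2)} and~\textup{(3)} are formal consequences, and I expect the only real bookkeeping to be matching the Chebyshev expansion against the exact form stated in~\textup{(3)}.
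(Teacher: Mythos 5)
Your proposal is correct, but it is a genuinely different route from the paper's: the paper disposes of (1) and (2) by citing \cite[Theorem 2.2]{bfpt} and \cite[Proposition 2.10]{bfpt} respectively, and obtains (3) by solving the resulting linear recursion, whereas you give a self-contained transfer-matrix argument. Your matrix realization $M_{a_i}\cdots M_{a_j}=\left(\begin{smallmatrix} a_{i,j} & -a_{i,j-1}\\ a_{i+1,j} & -a_{i+1,j-1}\end{smallmatrix}\right)$ is the matrix form of the continuant/determinant description of frieze entries (the paper's formula (4.1), via \cite[Lemma 3.5]{bfpt} and \cite{br}), and from it (1) follows by the conjugation-invariance of the trace, (2) by Cayley--Hamilton in $\mathrm{SL}_2$, and (3) either by your Chebyshev (Dickson polynomial) identification $s_r=2T_r(s_{\mathcal F}/2)$ or, even more directly in your framework, from $s_r=\tr(M^r)=\lambda^r+\lambda^{-r}$ with $\lambda+\lambda^{-1}=s_{\mathcal F}$; this matches the stated closed form, whose $l$-th coefficient is exactly $\frac{r}{r-l}\binom{r-l}{l}$. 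What your approach buys is a uniform, citation-free proof of all three parts (and it makes transparent that minimality of the period is not needed for (1) and (2)); what the paper's approach buys is brevity, outsourcing (1) and (2) to results already established for periodic tame friezes. Two small points to tidy: the induction establishing the matrix identity uses the three-term diagonal recursion $a_{i,j+1}=a_{j+1}a_{i,j}-a_{i,j-1}$, which needs the boundary rows of $1$'s and $0$'s in addition to the diamond rule (standard, but worth saying); and note that your trace computes $a_{i,rn+i-1}-a_{i+1,rn+i-2}$, i.e.\ the difference with the entry directly above in row $rn-2$ as in Figure~\ref{fig:infinitefrieze} --- this is the intended meaning of (1), the index $i-1$ in the statement being an indexing slip, so your parenthetical caveat is exactly right.
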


\begin{proof}
Part (1) is Theorem 2.2 in \cite{bfpt}. Part (2) is Proposition 2.10 in \cite{bfpt}.
Part (3) can be obtained by solving the recursion in Part (2). 
\end{proof}

By Proposition~\ref{prop:growth}(1), the difference between an entry in the $n$-th row 
and the entry directly above it in row $n-2$ in any infinite periodic frieze of period $n$ is constant, 
as is the difference between an entry in row $kn$ 
and the entry above it in row $k n-2$, for 
all $k>1$. The sequence 
$(s_1,s_2,s_3,\dots)$ thus determines the growth of the entries in the frieze. 
By Proposition~\ref{prop:growth}(3), these 
coefficients grow exponentially (\cite[Proposition 4.7]{bfpt}). 

We now concentrate on pairs of  infinite periodic friezes arising from triangulations of an annulus with marked points. 

\begin{definition}
Let $\mathcal T$ be a triangulation of $C_{m,n}$ with associated quiddity sequences 
$q_1=(a_1,\dots, a_m)$ and $q_2=(b_1,\dots, b_n)$ and, infinite periodic friezes $\cF_1$ and 
$\cF_2$. We denote the entries of $\cF_1$ by $a_{ij}$ and the entries of 
$\cF_2$ by $b_{ij}$. Then we define $s_{q_1}:=a_{1,m}-a_{2,m-1}$ and 
$s_{q_2}:=b_{1,n}-b_{2,n-1}$. We call $s_{q_i}$ the {\em growth coefficient of $q_i$}. 
\end{definition}

\begin{remark}
(1)
In order to determine growth coefficients, it is enough to work with 
skeletal quiddity sequences, see~\cite[Theorem 3.1]{bfpt}. 

(2)
In our infinite periodic friezes, we always have $s_{q}> 2$. 
There are infinite periodic friezes with $s_q=2$. In these 
cases, all the coefficients $s_i$ are 2. Such a quiddity sequence generates an arithmetic 
frieze which we have excluded from 
our considerations, see Example~\ref{ex:spiraling} and Remark~\ref{rem:no-arithm}. 

(3)
By definition, the growth coefficient $s_{q}$ of a quiddity sequence $q=(a_1,\dots, a_n)$ 
is equal to one of the 
$s_r$ from above, it is not necessarily equal to the growth coefficient 
$s_1=s_{\cF}$ of the associated frieze, 
as the quiddity sequence may be symmetric, i.e. the repetition of a shorter 
subsequence. We have $s_{q}=s_{\cF}$ if and only 
if the minimal period of $\cF_i$ is equal to $n$, 
i.e. if and only if the minimal period of $\cF_i$ is equal to the length of its quiddity sequence.
\end{remark}

\begin{example}\label{ex:growth}
As an example, consider the leftmost triangulation in Figure~\ref{fig:skeleton-triangulation}. 
The quiddity sequences are $q_1=(1,4,4)$ and $q_2=(3,3)$ and one can compute 
$s_{q_1}=7$, $s_{q_2}=7$: For $q_1$ we have to compute the difference between an entry in the 
3rd non-trivial row and the entry above it in the quiddity row. 
For $q_2$, we compute the difference between an entry 
in the 2nd non-trivial row and an entry above it. 
However, if one views $\cF_2$ as an infinite 1-periodic frieze, one obtains 
$s_{\cF_2}=3$. 
\end{example}

We observe that $s_{q_1}=s_{q_2}$ in Example~\ref{ex:growth}. 
This is no coincidence, as we will see now. 

\begin{proposition}\label{prop:growth-2-boundaries}
Let $\mathcal T$ be a triangulation of an annulus, with associated 
infinite periodic friezes $\cF_1$ and $\cF_2$ and quiddity sequences $q_1$ and $q_2$ respectively. 
Then 
$s_{q_i}=s_{q_i^s}$ for $i=1,2$ and $s_{q_1}=s_{q_2}$. 
\end{proposition}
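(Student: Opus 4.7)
The plan is to treat the two claims separately and then combine them. First I would establish the reduction invariance $s_{q_i}=s_{q_i^s}$. Since reduction of a quiddity sequence at a $1$ corresponds to removing a peripheral (ear) triangle from the triangulation by Remark~\ref{rem:reduce-triangles}(b), this operation does not alter the bridging arc structure on which the growth depends. Concretely, this invariance is already the content of \cite[Theorem 3.1]{bfpt}, so iterating reductions until no $1$'s remain yields $s_{q_i}=s_{q_i^s}$ for $i=1,2$.

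For the second claim $s_{q_1}=s_{q_2}$, I would first pass to the skeletal pair: by the first claim it suffices to prove $s_{q_1^s}=s_{q_2^s}$, and by Proposition~\ref{skeleton} this pair arises from the skeletal triangulation $\cT^s$. One direct route is then to quote \cite[Theorem 3.4]{bfpt}, which establishes equality of the growth coefficients for the pair of friezes attached to any triangulation of an annulus. A more self-contained route is to combine Corollary~\ref{cor:partner-quiddity}, which describes $q_2^s$ explicitly from $q_1^s$ via the involution swapping the peaks $a_{i_j}$ with the block lengths $k_j+3$ and the block lengths $k_j$ with $a_{i_j}-3$, together with the closed formula for the growth coefficient in Corollary~\ref{cor:frieze-formula}.

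The main obstacle in the self-contained route is combinatorial: one needs to match the signed summands
\[
\sum_I (-1)^{\ell_I}\prod_{k\in I}a_k \;+\; \delta_n
\]
for $q_1^s$ with the corresponding expression for $q_2^s$ under the block-swapping involution, recognising that admissible subsets $I$ on one side correspond bijectively to admissible subsets on the other. Once this bijection is set up, the identity $s_{q_1^s}=s_{q_2^s}$ reduces to a term-by-term comparison. Bookkeeping for the $\delta_n$ contributions and the sign exponents $\ell_I$ requires some care, which is why, in practice, invoking \cite[Theorem 3.4]{bfpt} after the reduction step gives the cleanest presentation.
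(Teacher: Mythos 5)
Your proposal is correct and matches the paper's approach: the paper proves this proposition simply by citing Theorem~3.4 of \cite{bfpt} (with the reduction invariance likewise delegated to \cite{bfpt}, as noted in the surrounding remarks), which is exactly the route you settle on. Your sketched self-contained alternative via Corollary~\ref{cor:partner-quiddity} and Corollary~\ref{cor:frieze-formula} is plausible but not carried out, so the citation remains the operative argument, just as in the paper.
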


\begin{proof}
This is Theorem 3.4 in \cite{bfpt}. 
\end{proof}

Let us point out that not every pair of infinite periodic friezes with a common growth coefficient can be 
realized via a triangulation of an annulus. Consider $q_1=q_2=(2,3)$ with growth 
coefficient $s_{q_1}=s_{q_2}=4$. But this pair does not correspond to any triangulation of 
an annulus. Another such example is: $q_1=(4,3,4,3)$ and $q_2=(5,20)$ 
where $s_{q_1}=s_{q_2}=98$. 

%
\subsection{A formula for calculating growth coefficents}\label{sec:formula-matrices}
Consider an arbitrary element \(a_{ij}\) of the frieze in Figure~\ref{fig:infinitefrieze}.
By \cite[Lemma 3.5]{bfpt}, building on work by \cite{br}, we can find the entry \(a_{ij}\) of the infinite periodic frieze by calculating a determinant:
\begin{equation}\label{eq:determinant}
a_{ij}=\det
\begin{tikzpicture}[baseline=(current bounding box.center)]
\matrix (m) [matrix of math nodes, nodes in empty cells, left delimiter={(}, right delimiter={)}]
{
a_i 		& 1 		& 0 		& 	& 	& 0 		\\
1 		& a_{i+1} 	& 1 		& 	& 		& 	\\
0		& 1		& 	& 	& 	& 	\\
 	& 	& 	& 	& 1 		& 0 		\\
 	& 		& 	& 1 		& a_{j-1} 	& 1		\\
0		& 	& 	& 0		& 1 		& a_j	\\
};
\draw [loosely dotted, thick] (m-2-2) -- (m-5-5);
\draw [loosely dotted, thick] (m-2-3) -- (m-4-5);
\draw [loosely dotted, thick] (m-3-2) -- (m-5-4);
\draw [loosely dotted, thick] (m-1-3) -- (m-4-6);
\draw [loosely dotted, thick] (m-3-1) -- (m-6-4);
\draw [loosely dotted, thick] (m-1-3) -- (m-1-6);
\draw [loosely dotted, thick] (m-3-1) -- (m-6-1);
\draw [loosely dotted, thick] (m-1-6) -- (m-4-6);
\draw [loosely dotted, thick] (m-6-1) -- (m-6-4);
\end{tikzpicture}
\end{equation}

\begin{definition}
Consider a finite ordered set of integers \(S=\{i, i+1, \ldots, j-1, j\}\). A \emph{pair-excluding subset} \(I\subseteq S\) is a subset obtained by removing zero or more disjoint pairs of consecutive integers from \(S\).

A \emph{cyclic pair-excluding subset} \(J\subseteq S\) is a subset obtained by removing zero or more disjoint pairs of consecutive integers from \(S\), when the first and the last element of \(S\) are also considered consecutive.
\end{definition}
Note that any pair-excluding subset is also cyclic pair-excluding. The empty set is a pair-excluding subset if and only if the cardinality of \(S\) is even.
\begin{example}
The pair-excluding subsets of \(\{1,2,3,4,5\}\) are
\[
\begin{array}{llll}
\{1, 2, 3, 4, 5\},\\
\{1,2,3\}, &\{1,2,5\},&\{1,4,5\},&\{3,4,5\},\\
\{1\}, & \{3\}, & \text{and }&\{5\}\\
\end{array}
\]
The cyclic pair-excluding subsets are the above, together with the sets
\[
\{2,3,4\}, \{2\} \text{ and }\{4\}.
\]
Similarly, the pair-excluding subsets of \(\{1, 2, 3, 4, 5, 6\}\) are 
\[
\begin{array}{llllll}
\{1,2,3,4,5,6\},\\
\{1,2,3,4\}, & 	\{1,2,3,6\}, &	\{1,2,5,6\}, &	\{1,4,5,6\}, &	\{3,4,5,6\},\\ 
\{1,2\}, & 			\{1,4\}, &		\{1,6\},&		\{3,4\}, &		\{3,6\}, & 		\{5,6\}\\
\text{and } \emptyset.
\end{array}
\]
The cyclic pair-excluding subsets are the sets above, together with the sets
\[
\{2,3,4, 5\}, \{2,3\}, \{2,5\} \text{ and } \{4, 5\}.
\]
\end{example}

\begin{theorem}\label{thm:continuant} 
With the notation above, 
\[
a_{ij}=\sum_{\substack{I\subseteq \{i, \ldots, j\}\\ \text{pair-excluding}}}(-1)^{\ell_I}\prod_{k\in I}a_k
\]
where \(\ell_I\) is the number of pairs that were excluded from \(\{i, \ldots, j\}\) to create \(I\); in 
other words \(\ell_I=\frac{j-i+1-|I|}{2}\).
\end{theorem}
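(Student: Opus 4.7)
My plan is to prove Theorem \ref{thm:continuant} by induction on $j-i$, using the determinant formula \eqref{eq:determinant} and the fact that both sides satisfy the same three-term recursion. Write $K(i,j)$ for the determinant in \eqref{eq:determinant}, so that $a_{ij}=K(i,j)$, and write $S(i,j)$ for the right-hand side of the claimed identity. The matrix in \eqref{eq:determinant} is tridiagonal with $a_i,a_{i+1},\dots,a_j$ on the diagonal and $1$'s on the adjacent diagonals, so expanding $K(i,j)$ along the last row (first using the $a_j$ entry, then cofactor-expanding the remaining minor along its last column) yields the classical continuant recursion
\[
K(i,j)=a_j\,K(i,j-1)-K(i,j-2),
\]
with base cases $K(i,i)=a_i$ and the convention $K(i,i-1)=1$ (empty determinant).

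Next I would verify that $S(i,j)$ satisfies the same recursion. The pair-excluding subsets $I\subseteq\{i,\dots,j\}$ split cleanly into two classes according to whether $j\in I$ or $j\notin I$. If $j\in I$, then $I\setminus\{j\}$ is a pair-excluding subset of $\{i,\dots,j-1\}$ with the same number $\ell_I$ of excluded pairs; these contribute $a_j\,S(i,j-1)$. If $j\notin I$, then $j$ must have been removed as part of the consecutive pair $\{j-1,j\}$ (since pairs are removed in consecutive blocks), so $j-1\notin I$ either and $I$ is a pair-excluding subset of $\{i,\dots,j-2\}$ with exactly one additional excluded pair; these contribute $-S(i,j-2)$. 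Hence
\[
S(i,j)=a_j\,S(i,j-1)-S(i,j-2).
\]
The base cases match: $S(i,i)=a_i$ (only $I=\{i\}$ is pair-excluding) and $S(i,i-1)=1$ (only $I=\emptyset$, with $\ell_I=0$).

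Putting these together, $K(i,j)$ and $S(i,j)$ agree on the base cases and satisfy the same recursion, so by induction on $j-i$ we get $a_{ij}=K(i,j)=S(i,j)$, which is exactly the claim. The only subtle step is the combinatorial splitting argument for the class $j\notin I$: one must argue that in a pair-excluding subset every removed element belongs to a pair of consecutive removed integers, and that once $j$ is removed its partner is forced to be $j-1$ (not $j+1$, which doesn't exist in $\{i,\dots,j\}$). This is the main thing to nail down carefully, but it follows directly from the definition of pair-excluding subset; no further machinery is required, and no cyclic subtlety enters since here we use the linear (non-cyclic) version.
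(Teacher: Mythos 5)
Your proposal is correct. Both the determinant expansion $K(i,j)=a_j\,K(i,j-1)-K(i,j-2)$ (last-row cofactor expansion, then expanding the remaining minor along its last column) and the splitting of pair-excluding subsets according to whether $j\in I$ are sound: when $j\notin I$ its excluded partner is forced to be $j-1$, giving the $-S(i,j-2)$ term with the sign accounted for by the extra excluded pair, and the base cases $S(i,i)=a_i$, $S(i,i-1)=1$ match the continuant conventions (or one can check $j=i+1$ directly, where the empty set contributes the $-1$). The difference from the paper is one of packaging rather than substance: the paper disposes of the theorem in one line by citing \S 544--546 of Muir's treatise, where exactly this expansion of a continuant (tridiagonal determinant) as a signed sum over deletions of consecutive pairs is classical, whereas you reprove that classical fact from scratch. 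What your route buys is a self-contained, elementary verification that makes the combinatorial meaning of the pair-excluding subsets transparent and keeps the paper independent of an external reference; what the citation buys is brevity and an explicit pointer to the continuant literature. Either is acceptable; if you write yours up, the only point to state carefully is the one you already flagged, namely that every element missing from a pair-excluding subset lies in a removed consecutive pair, so removing $j$ forces removing $j-1$.
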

\begin{proof}
The theorem follows from using \S 544-546 of \cite{muir} on formula \ref{eq:determinant}.
\end{proof}

\begin{example}
For \(i=j\), the formula simply reads $a_{ii}=a_{ii}$, and for \(j=i+1\), it says 
$a_{i,i+1}=a_{ii}a_{i+1,i+1}-1$, 
where the term $-1$ appears due to the empty set being a pair-excluding subset when the original 
set is of even length.
\noindent For \(i=1\), \(j=5\) the formula is as follows:
\begin{align*}
a_{15}=&a_1a_2a_3a_4a_5\\
& - a_1a_2a_3 -a_1a_2a_5-a_1a_4a_5-a_3a_4a_5\\
& + a_1 + a_3 + a_5 
\end{align*}
For \(i=1\), \(j=6\) the formula is as follows:
\begin{align*}
a_{16}=&a_1a_2a_3a_4a_5a_6 \\
& - a_1a_2a_3a_4 - a_1a_2a_3a_6 - a_1a_2a_5a_6 - a_1a_4a_5a_6 - a_3a_4a_5a_6 \\
&+ a_1a_2 + a_1a_4 + a_1a_6+a_3a_4 +a_3a_6+a_5a_6\\
&-1.
\end{align*}
\end{example}

\begin{corollary}
\label{cor:frieze-formula}
Consider an infinite periodic frieze with quiddity sequence \(q=(a_1, \ldots, a_n)\). 
The growth coefficient of this frieze is given by
\[s_q=\left(\sum_{\substack{I\subseteq\{1, \ldots, n\} \\ \text{cyclical} \\ \text{pair-excluding} }}(-1)^{\ell_I} \prod_{k\in I}a_k\right)+\delta_n\]
Here \(\ell_I\) is the number of pairs that were excluded from \(\{1, \cdots, n\}\) to create \(I\); in other words \(\ell_I=\frac{n-|I|}{2}\). Furthermore, 
\[\delta_n=\begin{cases}
0 &\text{ if } n \text{ is odd},\\
1 &\text{ if } n \text{ is divisible by } 4, \\
-1 &\text{ if } n \text{ otherwise.}
\end{cases}\]
\end{corollary}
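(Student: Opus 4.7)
The plan is to apply Theorem~\ref{thm:continuant} to both $a_{1,n}$ and $a_{2,n-1}$, combine the two sums, and identify the union of their indexing subsets (via inclusion--exclusion) with the cyclical pair-excluding subsets, showing that the inclusion--exclusion correction equals $\delta_n$.

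First I would write, via Theorem~\ref{thm:continuant},
\[
a_{1,n}=\sum_{I\in A}(-1)^{(n-|I|)/2}\prod_{k\in I}a_k,
\qquad
a_{2,n-1}=\sum_{J\in B}(-1)^{(n-2-|J|)/2}\prod_{k\in J}a_k,
\]
where $A$ is the set of pair-excluding subsets of $\{1,\dots,n\}$ and $B$ that of $\{2,\dots,n-1\}$ (viewed as subsets of $\{1,\dots,n\}$ disjoint from $\{1,n\}$). Since $(n-2-|J|)/2=(n-|J|)/2-1$, setting $w(I):=(-1)^{(n-|I|)/2}\prod_{k\in I}a_k$ gives
\[
s_q=a_{1,n}-a_{2,n-1}=\sum_{I\in A}w(I)+\sum_{J\in B}w(J).
\]

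The set $C$ of cyclical pair-excluding subsets of $\{1,\dots,n\}$ is exactly $A\cup B$: a cyclical pair-excluding subset either does not remove the wrap-around pair $\{n,1\}$ (whence it lies in $A$) or does remove it together with disjoint consecutive pairs in $\{2,\dots,n-1\}$ (whence it lies in $B$). Inclusion--exclusion therefore yields
\[
s_q=\sum_{I\in C}w(I)+\sum_{I\in A\cap B}w(I),
\]
so it remains to show $\sum_{I\in A\cap B}w(I)=\delta_n$.

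The main obstacle is characterising $A\cap B$. I would prove that $A\cap B=\{\emptyset\}$ if $n$ is even and $A\cap B=\emptyset$ if $n$ is odd. For $I\in A\cap B$, set $T:=\{1,\dots,n\}\setminus I$; by definition $T$ splits into disjoint consecutive pairs in $\{1,\dots,n\}$, and $T\setminus\{1,n\}$ splits into disjoint consecutive pairs in $\{2,\dots,n-1\}$. Since $1,n\in T$, the first decomposition must contain $\{1,2\}$ and $\{n-1,n\}$; in the second, $2$ must then pair with $3$ and $n-1$ with $n-2$, forcing $3,n-2\in T$; back in the first, $3$ must pair with $4$ and $n-2$ with $n-3$; and so on. This inward propagation from both ends forces $T=\{1,\dots,n\}$ (hence $I=\emptyset$) when $n$ is even, while for odd $n$ the two propagations collide in the middle and no valid $T$ exists. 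Since $w(\emptyset)=(-1)^{n/2}$, the correction is $+1$ for $n\equiv 0\pmod 4$, $-1$ for $n\equiv 2\pmod 4$, and $0$ for odd $n$, matching $\delta_n$.
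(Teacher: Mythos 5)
Your proof is correct and follows essentially the same route as the paper: expand $a_{1,n}$ and $a_{2,n-1}$ via Theorem~\ref{thm:continuant}, use the sign flip coming from $\ell_J\mapsto \ell_J-1$ to turn the subtraction into a sum, and identify the combined index set with the cyclic pair-excluding subsets of $\{1,\dots,n\}$. Your explicit inclusion--exclusion analysis showing $A\cap B=\{\emptyset\}$ for $n$ even and $A\cap B=\emptyset$ for $n$ odd is a welcome sharpening of the paper's brief remark that the extra $\pm1$ term in $a_{2,n-1}$ is what produces $\delta_n$.
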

\begin{proof}
We know that the growth coefficient is obtained by taking an entry in the \(n\)th row of the frieze, and subtracting the entry directly above it, in row \(n-2\). The growth coefficient is independent of the choice of entry, so we may write \(s_q=a_{1,n}-a_{2,n-1}\)

Consider the pair-excluding subsets of \(\{2, \ldots, n-1\}\). These are precisely the subsets of \(\{1, \ldots, n\}\) which are cyclic pair-excluding, but not pair-excluding.
Hence we get
\begin{align*}
s_q=&a_{1,n}-a_{2,n-1}\\
=&\left(\sum_{\substack{I\subseteq \{1, \ldots, n\}\\ \text{pair-excluding}}}(-1)^{\ell_I}\prod_{k\in I}a_k\right) 
-\left(\sum_{\substack{I\subseteq \{2, \ldots,n-1\}\\ \text{pair-excluding}}}(-1)^{\ell_I}\prod_{k\in I}a_k\right)\\
=&\sum_{\substack{I\subseteq\{1, \ldots, n\} \\ \text{cyclical} \\ \text{pair-excluding} }}(-1)^{\ell_I} \prod_{k\in I}a_k+\delta_n.
\end{align*}
For \(n\) even, we have a term \(\pm 1\) in the expression for \(a_{2,n-1}\) which is recovered by the inclusion of \(\delta_n\) in the formula; this term does not occur for odd \(n\).
\end{proof}
\begin{example} \label{ex:formula}
Consider the infinite periodic frieze given by the quiddity sequence \(q=(2,3,4,2,4)\) as illustrated in Figure \ref{fig:5-frieze}. By Corollary \ref{cor:frieze-formula}, we calculate the growth coefficient to be 
\begin{align*}
s_q=&2\cdot 3 \cdot 4 \cdot 2\cdot 4 \\
&- (2\cdot 3\cdot 4 + 2\cdot 3\cdot 4+2\cdot 2\cdot 4 - 4\cdot 2 \cdot 4 - 2\cdot 3\cdot 4) \\
&+(2+3+4+2+4)\\
=&87,
\end{align*}
which is also the growth coefficient we read off the frieze in Figure \ref{fig:5-frieze}.

\begin{figure}[h]
\begin{tikzpicture}
 \matrix[matrix of math nodes, nodes in empty cells] (m) at (0,0)
 {
 0 && 0 && 0 && 0 && 0 && 0 && 0 && 0 && 0 && 0\\
 & 1 && 1 && 1 && 1 && 1 && 1 && 1 && 1 && 1 && 1 \\
 2 && 3 && 4 && 2 && 4 && 2 && 3 && 4 && 2 && 4\\
 & 5 && 11 && 7 && 7 && 7 && 5 && 11 && 7 && 7 && 7 \\
 17 && 18 && 19 && 24 && 12 && 17 && 18 && 19 && 24 && 12\\
 & 62 && 31 && 65 && 41 && 29 && 62 && 31 && 65 && 41 && 29 \\
 104 && 105 && 106 && 111 && 99 && 104 && 105 && 106 && 111 && 99 \\
 & \vdots && \vdots && \vdots && \vdots && \vdots && \vdots && \vdots && \vdots && \vdots && \vdots \\
 };
\end{tikzpicture}
\caption{The frieze with quiddity sequence \((2,3,4,2,4)\).} 
\label{fig:5-frieze}
\end{figure}

\end{example}

%
\subsection{Cluster category for an infinite periodic frieze} \label{sec:frieze-to-category} 

We consider a cluster category 
$\cC=D^b(\C Q)/\tau^{-1}[1]$ 
of type $\widetilde{A}$ associated to a non-oriented cyclic quiver $Q$ with $n+m$ vertices, 
where $m$ arrows are of the form $i\to i+1$ and $n$ arrows of the form $i\to i-1$, 
as defined in~\cite{bmrrt}. 
The Auslander--Reiten quiver (AR quiver) 
of this cluster category has two tubes, one of rank $n$ and one of rank $m$, 
see~\cite{bt} for details. 
We use the specialized Caldero--Chapoton (CC) map \cite{cc} (see \cite{p} for the general setting) with respect to the algebra $\C Q$ 
to associate infinite periodic friezes to the tubes of the AR quiver of $\cC$. 
In this case, the CC map sends the shifted projectives to 1. 
All other indecomposables can be viewed as $\C Q$-modules. 
The CC map sends such a module $M$ to $\sum_{\underline{e}}\chi(Gr_{\underline{e}}M)$ where the sum is over submodules of $M$ with dimension vector $e$ and $\chi$ is the Euler-Poincar\'e characteristic of the complex Grassmannian. 
For rigid modules in the tube, this is the number of submodules. For non-rigid modules, i.e. modules 
in a tube of rank $n$, which are at least $n$ steps away from the mouth of the tube, 
the Euler-Poincar\'e characteristic can be computed by taking into account 
simples which occur with higher multiplicities. 
Indeed, for any module M in a tube
there is a representation of $M$ by an arc, see e.g.~\cite{warkentin,abcp,BM-tube}.
In the same setting, the authors in \cite{msw} construct a planar graph to each arc, called 
\emph{snake graph}, and use cardinalities of certain sets in this graph, namely \emph{perfect matchings}, 
to compute the Euler-Poincar\'e characteristic $\chi(Gr_{\underline{e}}M)$ of a fixed dimension vector 
$\underline{e}$, see~\cite[Theorem 13.1]{msw}. By establishing a bijection between the lattice structure 
of perfect matchings of a snake graph and the \emph{extended} submodule structure of $M$ (this 
corresponds to the submodule lattice of $M'$ where $M'$ is obtained by 
considering the simple composition factors of $M$ with their multiplicities), 
\cite[Corollary 3.10]{cs} establishes a 
formula for $\chi(Gr_{\underline{e}}M)$ in terms of submodules of $M$.

We denote the specialized CC map by $s$. Thus 
if $M$ is an indecomposable object of $\cC$ which is not a shifted projective 
and which is at most $n-2$ steps away form the mouth of the tube
(
where the mouth itself is 
considered to be 0 steps away
), then 
$s(M)$ is 
simply
the number of submodules of $M$ and if $M$ is a shifted projective, 
$s(M)=1$. 
If $M$ is
 at least than $n-1$ steps away from the mouth of a tube of rank $n$
 , 
then $s$ is computed by counting the submodules of $M'$ where $M'$ is obtained by relabelling the simples that occur with higher multiplicities in $M$.
In type $\widetilde{A}$ almost split sequences have either one or two middle terms. After applying the specialized CC map to the tubes of the AR quiver, the entries satisfy the determinant rule to form two infinite periodic friezes, one for each tube (similar as in Section 5 of \cite{cc}). 
Namely, if $\tau M \to B \to M$ is an Auslander-Reiten triangle in the cluster category $\cC$, then applying the CC map by \cite[Proposition 2.2]{ad}, we obtain $s(\tau M)s(M) - s(B) =1$ where $s$ is the specialized CC-map. 


%
\subsection{Module-theoretic interpretation} \label{sec:modules-growth}

In this section, we associate a cluster tilting object in a cluster category of type $\widetilde{A}$ to an infinite periodic frieze. 
Let $q$ be the quiddity sequence of a skeletal infinite 
periodic frieze $\cF$. This gives rise to a skeletal triangulation $\cT=\cT(q)$
of an annulus with $n$ marked points 
on one boundary component (say $B_1$) of the annulus, by Corollary~\ref{cor:quiddity to triangulation}. 
Let $m$ be the number of marked points on $B_2$ (see Section~\ref{sec:tria-frieze}). 
Then $\cT$ is a triangulation of the annulus
$C_{n,m}$. Let $\cC=\cC_{n,m}$ be the category from Section~\ref{sec:frieze-to-category}. 
The indecomposable objects of 
this category except the ones that come from the homogeneous tubes are known to be in bijection with arcs in 
$C_{n,m}$, and triangulations of $C_{n,m}$ correspond to cluster-tilting objects for 
$\cC$, as described in~\cite{BM-tube, warkentin}. 
We claim that one of the infinite periodic friezes coming from the tubes is $\cF$. Note that 
the indecomposable objects in the tubes correspond to the peripheral arcs on the boundaries $B_1$ 
and $B_2$ respectively. We restrict to the tube arising from $B_1$. We use the following notation to 
label indecomposables in that tube. 
Note that we follow the frieze notation here and draw the 
vertices at the mouth of the tube on the top line. Going down in the quiver thus corresponds to 
injections.

\begin{figure}[ht!]
\begin{center}
\begin{tikzpicture}[scale=.7, inner sep=4pt]

\node (l1) at (-7.5,5) {$\ldots$};
\node (-1-1) at (-5,5) {($-1,-1$)};
\node (00) at (-2,5) {($0,0$)};
\node (11) at (1,5) {($1,1$)};
\node (22) at (4,5) {($2,2$)};
\node (r1) at (7,5) {$\ldots$};

\node (l2) at (-6,3.5) {$\ldots$};
\node (-10) at (-3.5,3.5) {\((-1, 0)\)};
\node (01) at (-0.5,3.5) {($0,1$)};
\node (12) at (2.5,3.5) {($1,2$)};
\node (23) at (5.5,3.5) {($2,3$)};
\node (r2) at (7.5,3.5) {$\ldots$};

\node (l3) at (-7,2) {$\ldots$};
\node (-20) at (-5,2) {($-2,0$)};
\node (-11) at (-2,2) {($-1,1$)};
\node (02) at (1,2) {($0,2$)};
\node (13) at (4,2) {($1,3$)};
\node (r3) at (6,2) {$\ldots$};

\node (l4) at (-6,0.5) {$\ldots$};
\node (-21) at (-3.5,0.5) {($-2,1$)};
\node (-12) at (-0.5,0.5) {($-1,2$)};
\node (03) at (2.5,0.5) {($0,3$)};
\node (14) at (5.5,0.5) {($1,4$)};
\node (r4) at (7.5,0.5) {$\ldots$};

\node (ai) at (-3.5,-.5) {$\vdots$};
\node (ai+) at (2.5,-.5) {$\vdots$};

\draw[->] (-1-1) to (-10);
\draw[->] (-10) to (00);
\draw[->] (00) to (01);
\draw[->] (01) to (11);
\draw[->] (11) to (12);
\draw[->] (12) to (22) ;
\draw[->] (22) to (23);

\draw[->] (-20) to (-10);
\draw[->] (-10) to (-11) ;
\draw[->] (-11) to (01);
\draw[->] (01) to (02);
\draw[->] (02) to (12) ;
\draw[->] (12) to (13);
\draw[->] (13) to (23);

\draw[->] (-20) to (-21);
\draw[->] (-21) to (-11) ;
\draw[->] (-11) to (-12);
\draw[->] (-12) to (02);
\draw[->] (02) to (03) ;
\draw[->] (03) to (13);
\draw[->] (13) to (14);

\draw[->] (-4.65,-.75) to (-21);
\draw[->] (-21) to (-2.25,-.75);
\draw[->] (-1.65,-.75) to (-12);
\draw[<-] (.85,-.75) to (-12);
\draw[->] (1.35,-.75) to (03);
\draw[->] (03) to (3.85,-.75);
\draw[->] (4.35,-.75) to (14);
\draw[dotted] (l1) to (-1-1) to (00);
\draw[dotted] (11) to (00);
\draw[dotted] (11) to (22) to (r1);
\draw[dotted] (l2) to (-10) to (01) to (12) to (23) to (r2);
\draw[dotted] (l3) to (-20) to (-11) to (02) to (13) to (r3);
\draw[dotted] (l4) to (-21) to (-12) to (03) to (14) to (r4);

\end{tikzpicture}
\end{center}
\end{figure}

%
We write $M_{ij}$ for the indecomposable module corresponding to the vertex $(i,j)$ with 
indices reduced modulo $n$ (Figure~\ref{fig:infinitefrieze-modules}). The dotted lines indicate the Auslander--Reiten translation $\tau$ in the category, which sends the module 
$M_{ij}$ to the module $M_{i-1,j-1}$. Let $M_i = M_{ii}$. 
Note that the indexing of vertices/modules follows the indexing we use for friezes. 
Under the correspondence between indecomposables and arcs in $C_{n,m}$, the indecomposable 
objects in this tube correspond to the peripheral arcs based at $B_1$. In particular, 
the $M_i$'s correspond to the peripheral arcs of $B_1$ joining the marked points $i$ and $i+2$.

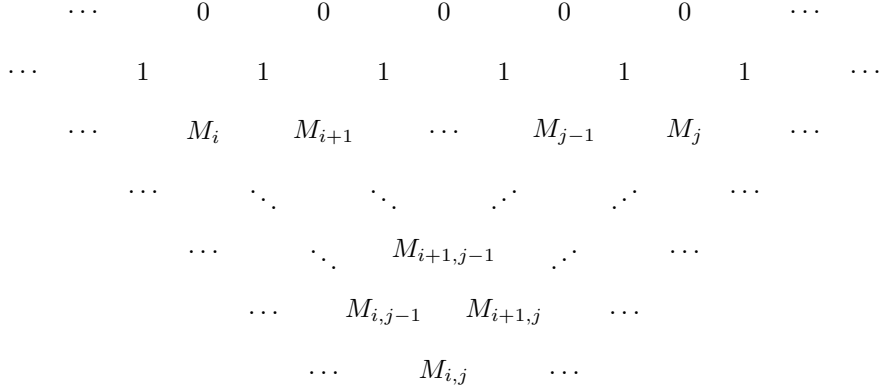
\begin{figure}[ht!]
\begin{center}
\begin{tikzpicture}[scale=.8, inner sep=4pt]
\node (00) at (-6,6) {$\ldots$};
\node (01) at (-4,6) {0};
\node (02) at (-2,6) {0};
\node (ai+) at (0,6) {0};
\node (03) at (2,6) {0};
\node (04) at (4,6) {0};
\node (05) at (6,6) {$\ldots$};
\node (10) at (-7,5) {$\ldots$};
\node (11) at (-5,5) {1};
\node (12) at (-3,5) {1};
\node (12) at (-1,5) {1};
\node (12) at (1,5) {1};
\node (14) at (3,5) {1};
\node (15) at (5,5) {1};
\node (16) at (7,5) {$\ldots$};

\node (a-2) at (-6,4) {$\ldots$};
\node (a-1) at (-4,4) {$M_i$};
\node (a0) at (-2,4) {$M_{i+1}$};
\node (a1) at (0,4) {$\hdots$};
\node (a2) at (2,4) {$M_{j-1}$};
\node (a3) at (4,4) {$M_j$};
\node (a4) at (6,4) {$\ldots$};

\node (10) at (-5,3) {$\ldots$};
\node (12) at (-3,3) {$\ddots$};
\node (12) at (-1,3) {$\ddots$};
\node (12) at (1,3) {\reflectbox{$\ddots$}};
\node (14) at (3,3) {\reflectbox{$\ddots$}};
\node (16) at (5,3) {$\ldots$};

\node (ai+) at (-4,2) {$\ldots$};
\node (ai+) at (-2,2) {$\ddots$};
\node (ai+) at (0,2) {$M_{i+1, j-1}$};
\node (aj-) at (2,2) {\reflectbox{$\ddots$}};
\node (ai+) at (4,2) {$\ldots$};

\node (ai+) at (-3,1) {$\hdots$};
\node (aj-) at (-1,1) {$M_{i, j-1}$};
\node (aj) at (1,1) {$M_{i+1, j}$};
\node (aj) at (3,1) {$\hdots$};

\node (ai) at (-2,0) {$\hdots$};
\node (ai+) at (0,0) {$M_{i,j}$};
\node (ai+) at (2,0) {$\ldots$};

\end{tikzpicture}
\end{center}
\caption{A tube of the AR quiver with modules $M=M_{i,j}$ and $\widetilde{M}=M_{i+1, j-1}$, and modules $M_i, \ldots, M_j$ at its mouth, where $j=i+t-1$.} 
\label{fig:infinitefrieze-modules}
\end{figure}

In order to link the indecomposable objects with the friezes, we use the specialized CC map 
$s$ defined above. We can then use \cite[Section 0.1]{hj} to obtain part (1) of the following: 

\begin{proposition}\label{prop:entries-via-s}
Following the notation above and with $M:=M_{i,i+t-1}$, for $t\ge 1$, \\
(1) $s(M_i)=a_{i}$. \\
(2) $s(M)= a_{i,i+t-1}$
\end{proposition}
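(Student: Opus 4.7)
The plan is to invoke \cite[Section 0.1]{hj} for part (1), giving $s(M_i) = a_i$ directly, and then to establish part (2) by induction on the level $t$ of the module $M = M_{i, i+t-1}$ in the tube.

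The base case $t = 1$ is immediate from part (1), as $M = M_{i, i} = M_i$ and $a_{i, i} = a_i$. For $t = 2$, I would apply the Auslander--Reiten triangle at the mouth module $M_{i+1}$, which has only a single middle term:
\[
M_i \longrightarrow M_{i, i+1} \longrightarrow M_{i+1}.
\]
Applying $s$ together with the Caldero--Chapoton cluster relation $s(\tau M)\, s(M) = s(B) + 1$ recalled in Section~\ref{sec:frieze-to-category} yields $s(M_{i, i+1}) = a_i\, a_{i+1} - 1$, which equals $a_{i, i+1}$ by the frieze diamond rule at level $2$, where the ``top'' entry of the diamond is a $1$ from the trivial row.

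For the inductive step $t \geq 3$, I would apply the cluster relation to the Auslander--Reiten triangle
\[
M_{i, i+t-2} \;\longrightarrow\; M_{i, i+t-1} \oplus M_{i+1, i+t-2} \;\longrightarrow\; M_{i+1, i+t-1}
\]
at the level-$(t-1)$ module $M_{i+1, i+t-1}$. Multiplicativity of the specialised CC map on direct sums produces
\[
s(M_{i, i+t-2})\, s(M_{i+1, i+t-1}) \;-\; s(M_{i, i+t-1})\, s(M_{i+1, i+t-2}) \;=\; 1.
\]
Three of the four $s$-values concern modules at levels strictly smaller than $t$ and hence equal the corresponding $a$-entries by induction; solving for $s(M_{i, i+t-1})$ and comparing with the frieze diamond rule at $a_{i, i+t-1}$ identifies the two expressions.

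The main obstacle I anticipate is verifying that the cluster identity $s(\tau M)\, s(M) = s(B) + 1$ continues to hold for modules $M$ at level $\geq n$ in a tube of rank $n$, where $s(M)$ is not simply the number of submodules but is computed from the extended submodule structure of Section~\ref{sec:frieze-to-category} via snake graphs and perfect matchings. Confirming that AR-multiplicativity for the specialised CC map persists on all tubes---through \cite[Proposition 2.2]{ad} together with the snake-graph and submodule-lattice correspondence of \cite{msw, cs}---is the technical heart of the argument, and is what legitimises the uniform induction for all $t \ge 1$ rather than only for modules in the rigid range.
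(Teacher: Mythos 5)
Your proposal is correct and is essentially the paper's argument made explicit: the paper obtains (1) from \cite[Section 0.1]{hj} and deduces (2) from the fact that the specialized CC map satisfies the determinant relation $s(\tau M)s(M)-s(B)=1$ on Auslander--Reiten triangles (as recalled in Section~\ref{sec:frieze-to-category} via \cite[Proposition 2.2]{ad}), which is exactly the diamond-rule induction on the level $t$ that you carry out. The technical point you flag about non-rigid modules beyond level $n$ is handled in the paper the same way you suggest, through the extended-submodule/snake-graph interpretation of $s$ in Section~\ref{sec:frieze-to-category}, so no new idea is needed.
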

\begin{proof}
(2) follows from (1), using the fact that $s$ is multiplicative on short exact sequences, 
see for example~\cite[Section 0.1]{hj}. 
\end{proof}
The modules $M_{i}$ form the {\em mouth} of the category. All modules in 
a given $\tau$-orbit are of the form $M_{i,i+n-1}$ for some $n>0$. We say that module 
$M_{i,i+n-1}$ is at level $n$. 

Let $M$ be an indecomposable in $\cC$ at level $n$. The {\em wing of $M$}, denoted by $\cW(M)$, 
consists of the indecomposable modules which are submodules, quotients or subquotients of $M$.
In the AR quiver, these are the indecomposables positioned in the triangle in $\Gamma$ whose apex is $M$.
This means that if $M=M_{i,i+n-1}$, then $\cW(M)$ is given by $\{M_{u,v}\mid i\le u\le v\le i+n-1\}$. 


Then $M_i,\dots, M_{i+n-1}$ are the modules at the mouth of the wing $\cW(M)$. 
Note that $M$ is obtained through iterated extensions from 
$M_i,\dots, M_{i+n-1}$. 
Now assume $n\ge 3$ and 
let $\widetilde{M}$ be $M_{i+1, i+n-2}$, so that $\cW(\widetilde{M})=\{M_{u,v}\mid i+1\le u\le v\le i+n-2\}$. 
Furthermore, define $N:=M_i\oplus M_{i+1}\oplus \dots \oplus M_{i+n-1}$.

In the following, we will quotient out direct sums of cyclic consecutive pairs of 
indecomposables from $N$: for $i \leq j \leq i+n-1$, let 
\[
N_j:= N/(M_{j}\oplus M_{j+1}).
\] 
In particular, 
$N_{i+n-1}= N/(M_{i+n-1}\oplus M_{i})$ since we reduce modulo $n$. 
Similarly, for $i \leq j_1 < j_2-1 \le i +n-2$, 
\[
N_{j_1,j_2}:= N/\left( M_{j_1}\oplus M_{j_1+1} \oplus M_{j_2}\oplus M_{j_2+1}\right)
\] 
and for $j_1,\dots, j_k$ with $k\ge 2$, $j_1< j_2-1 <\ldots < j_k-(k-1)\le i+n-k$, 
\[
N_{j_1,j_2,\ldots,j_k}:=N/\left(M_{j_1}\oplus M_{j+1+1}\oplus M_{j_2}\oplus M_{j_2+1} \oplus 
\dots \oplus M_{i_k}\oplus M_{i_k+1}\right)
\]
In particular, for $k=2$ and $j_2=i+n-1$, this gives 
$N_{j_1,i+n-1}=N/(M_i\oplus M_{j_1}\oplus M_{j_1+1}\oplus M_{i+n-1})$. 
Analogously, modules $N_{i_1,\dots, i_k}$ are defined, for $k\le n/2$.

Let $\{r_1,\dots, r_n\}=\{1,2,\dots, n\}$ (not necessarily ordered). We will use the following 
two properties in the proof of the theorem below: 
\begin{align}\label{eq:quotient}
 s(N/(M_{r_1}\oplus\cdots \oplus M_{r_m}))=s(M_{r_{m+1}}\oplus\cdots\oplus M_{r_n}) 
\end{align}
\begin{align} \label{eq:direct-sum}
s(M_{r_{m+1}}\oplus \cdots \oplus M_{r_t})=s(M_{r_{m+1}})\cdots s(M_{r_n})
\end{align}

\begin{theorem}\label{thm:repth}
Let $M$ be an indecomposable of $\cC$ at level $t\ge 3$, 
let $\widetilde{M}$ and $M_i,\dots, M_{i+t-1}$ be as above. \\
If $t=2k+1$,
\[s(M) -s(\widetilde{M}) = s(N) - \sum_{j=i}^{i+t-1} s(N_j) + \sum_{\substack{j_1, j_2=i \\ j_1< j_2-1}} ^{i+t-1}s(N_{j_1,j_2}) + \dots \pm (-1)^k \sum_{j=i}^{i+t-1} s(M_j). \]
If $t=2k$,
\[s(M) -s(\widetilde{M}) = s(N) - \sum_{j=i}^{i+t-1} s(N_j) + \sum_{\substack{j_1, j_2=i \\ j_1< j_2-1}} ^{i+t-1}s(N_{j_1,j_2}) + \dots \pm (-1)^k \cdot 2 .\]
\end{theorem}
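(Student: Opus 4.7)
The strategy is to rewrite both sides of the claimed identity as sums of monomials in $a_i, a_{i+1}, \ldots, a_{i+t-1}$ and to compare them via the combinatorics of cyclic pair-excluding subsets already developed in Section~\ref{sec:formula-matrices}.

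First, I would apply Proposition~\ref{prop:entries-via-s} to rewrite $s(M)=a_{i,i+t-1}$ and $s(\widetilde{M})=a_{i+1,i+t-2}$. Expanding each via Theorem~\ref{thm:continuant} and repeating the bookkeeping argument from the proof of Corollary~\ref{cor:frieze-formula}, which is purely combinatorial and only uses that the continuant on $\{i+1,\ldots,i+t-2\}$ is being subtracted from the continuant on $\{i,\ldots,i+t-1\}$, yields
\[
s(M)-s(\widetilde{M}) \;=\; \sum_{\substack{I\subseteq\{i,\ldots,i+t-1\}\\ \text{cyclic pair-excluding}}}(-1)^{\ell_I}\prod_{k\in I}a_k \;+\; \delta_t.
\]

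Next I would unfold the right-hand side of the theorem. Using equation~(\ref{eq:direct-sum}) together with Proposition~\ref{prop:entries-via-s}(1), $s(N)=\prod_{k=i}^{i+t-1}a_k$. By (\ref{eq:quotient}), quotienting $N$ by a direct sum of the $M_{r_\ell}$ deletes the corresponding factors from the product, so
\[
s(N_{j_1,\ldots,j_m}) \;=\; \prod_{k\in I}a_k, \qquad I=\{i,\ldots,i+t-1\}\setminus\{j_1,j_1+1,\ldots,j_m,j_m+1\},
\]
with indices reduced cyclically within $\{i,\ldots,i+t-1\}$. Summing over all admissible $(j_1,\ldots,j_m)$ and over $m$ with the alternating signs from the theorem, the right-hand side becomes
\[
\sum_{I} c_I\,(-1)^{\ell_I}\prod_{k\in I}a_k,
\]
where $I$ ranges over cyclic pair-excluding subsets of $\{i,\ldots,i+t-1\}$ and $c_I$ counts the number of ways to partition the complement of $I$ into disjoint cyclically-consecutive pairs.

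The key combinatorial point is that $c_I=1$ whenever $I\ne\emptyset$, since each maximal arc in the complement of $I$ has a unique adjacent-pair matching, while $c_\emptyset=2$ when $t$ is even, corresponding to the two parities of cyclic pairing of a full even cycle. For $t$ odd the empty set is not cyclic pair-excluding and the two expressions agree term-by-term with $\delta_t=0$; the smallest subsets $I$ are the singletons, producing the final summand $(-1)^k\sum_{j=i}^{i+t-1} s(M_j)$. For $t=2k$ the two cyclic pairings of the whole set contribute $(-1)^k\cdot 2$, which must match the single-empty-set contribution $(-1)^k$ from the continuant difference together with $\delta_{2k}$. The main (indeed, essentially the only) delicate step is this last verification: one checks from the explicit formula in Corollary~\ref{cor:frieze-formula} that $\delta_{2k}=(-1)^k$, so $(-1)^k+\delta_{2k}=(-1)^k\cdot 2$, which is precisely how the ``extra'' cyclic pairing of $N$ absorbs the continuant correction term.
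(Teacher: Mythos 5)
Your argument is correct and follows essentially the same route as the paper's proof: rewrite $s(M)-s(\widetilde{M})=a_{i,i+t-1}-a_{i+1,i+t-2}$ via Proposition~\ref{prop:entries-via-s}, expand both entries with Theorem~\ref{thm:continuant} to obtain the sum over cyclic pair-excluding subsets, and convert each monomial $\prod_{k\in I}a_k$ into $s(N_{j_1,\ldots,j_r})$ using (\ref{eq:quotient}) and (\ref{eq:direct-sum}). Your explicit bookkeeping of the coefficients $c_I$ and the check $\delta_{2k}=(-1)^k$ is in fact slightly more careful than the paper's own handling of the even-$t$ case, where the doubled empty-set contribution is absorbed into the final term $(-1)^k\cdot 2$ without comment.
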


Note that the $(k+1)$st term on the right hand of the expressions is 
\[ 
\sum_{\substack{j_1,\ldots, j_k=i \\ j_1< j_2-1 <\ldots < j_k-(k-1)}} ^{i+t-1}s(N_{j_1,j_2,\ldots,j_k}).
\]

\begin{proof} 
We use the formula from Theorem~\ref{thm:continuant} for $M=M_{i,i+t-1}$ and 
$\widetilde{M}=M_{i+1,i+t-2}$ 
with Proposition~\ref{prop:entries-via-s} (2)
:
\begin{eqnarray*}
s(M) - s(\widetilde{M}) & = & a_{i,i+t-1} - a_{i+1,i+t-2} \\ 
 & = & \sum_{\substack{I\subseteq \{i, \ldots, i+t-1\}\\ \text{pair-excluding}}} (-1)^{\ell_I}\prod_{k\in I}a_k 
 \quad - \quad
 \sum_{\substack{I\subseteq \{i+1, \ldots, i+t-2\}\\ \text{pair-excluding}}} (-1)^{\ell_I}\prod_{k\in I}a_k 
\end{eqnarray*}

Consider the second term on the right hand side: we can view pair-excluding subsets of 
$\{i+1,\dots, i+t-2\}$ as the cyclically pair-excluding subsets of $\{i,\dots, i+t-1\}$ which are not 
pair-excluding subsets of $\{i,\dots, i+t-1\}$. Now observe that for any (cyclically) pair-excluding subset 
$I$, the 
sign of $(-1)^{\ell_I}$ when $I$ is viewed as a subset of $\{i,\dots, i+t-1\}$ is the opposite of the sign of 
$(-1)^{\ell_I}$ when $I$ is viewed as a subset of $\{i+1,\dots, i+t-2\}$. 
Given that the second term on the right is subtracted 
from the first, these signs cancel each other. We can thus add the cyclic pair-excluding 
subsets from the second term to the first term and get the following:

\begin{eqnarray*}
s(M)-s(\widetilde{M}) & = & \sum_{\substack{I\subseteq \{i, \ldots, i+t-1\}\\ \text{cyclic} \\ \text{pair-excluding}}} 
(-1)^{\ell_I}\prod_{k\in I}a_k
\end{eqnarray*}

The claim then follows using the equations (\ref{eq:quotient}) and 
(\ref{eq:direct-sum}) since for any cyclic pair-excluding subset $I$ of $\{i,\dots, i+t-1\}$, 
\[
\prod_{k\in I} a_k=\prod_{i\in I} s(M_i)=s(\bigoplus_{i\in I} M_i)=s(N/\bigoplus_{i\notin I}M_i)
\]
where the last term is equal to $s(N_{j_1,\dots, j_r})$ for 
$\{i,i+1,\dots, i+t-1\}\setminus I=\{j_1,j_1+1,j_2,j_2+1,\dots, j_r,j_r+1\}$. 
\end{proof}

We can use the theorem to give the growth coefficient of a quiddity sequence a module-theoretic 
interpretation. For a given skeletal quiddity sequence $(a_1,\dots, a_n)$ of an infinite periodic frieze, 
let $\cT$ be the associated triangulation of $C_{n,m}$ as at the beginning of this section. 
Then take the cluster category $\cC_{n,m}$ as in Section~\ref{sec:frieze-to-category}
and let $s$ be the specialized CC map. 

\begin{corollary}\label{cor:growth-cc-map}
Let $q=(a_1,\dots, a_n)$ be a quiddity sequence, let $\cB$ be the associated rank $n$ tube as 
above. Let 
$M=M_{i,i+n-1}$ be any indecomposable in $\cB$ at level $n$ and 
$\widetilde{M}=M_{i+1,i+n-2}$. Then we have 
\[
s(M)-s(\widetilde{M})=s_{q}. 
\]
\end{corollary}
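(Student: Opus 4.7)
The statement is essentially immediate once the language of frieze entries and the growth coefficient is in place; the real content was already done in establishing Proposition~\ref{prop:entries-via-s}(2), Theorem~\ref{thm:repth}, and Corollary~\ref{cor:frieze-formula}. The plan is to identify both sides of the claimed equality via the frieze entries.

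First, by Proposition~\ref{prop:entries-via-s}(2), the specialized Caldero--Chapoton map yields $s(M) = s(M_{i,i+n-1}) = a_{i,i+n-1}$ and $s(\widetilde M) = s(M_{i+1,i+n-2}) = a_{i+1,i+n-2}$, where $a_{\bullet,\bullet}$ denote entries of the infinite periodic frieze $\cF$ associated to $q$. These two entries lie at levels $n$ and $n-2$ of $\cF$ and share the same horizontal coordinate, so their difference is exactly the vertical drop defining the growth coefficient $s_q = a_{1,n} - a_{2,n-1}$. The fact that this drop is independent of the column index $i$ is Proposition~\ref{prop:growth}(1), and this yields the result at once.

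Alternatively, in the spirit of Section~\ref{sec:formula-matrices}, one may specialize Theorem~\ref{thm:repth} to $t=n$ to rewrite $s(M) - s(\widetilde M)$ as a signed sum $\sum_I (-1)^{\ell_I}\prod_{k\in I} a_k$ over cyclic pair-excluding subsets $I \subseteq \{i,\dots,i+n-1\}$ (using multiplicativity of $s$ on the summands $N_{j_1,\ldots,j_r}$ and the identification $s(M_k)=a_k$), and then recognize this as the formula for $s_q$ given by Corollary~\ref{cor:frieze-formula}. The only point requiring care, and the main bookkeeping obstacle, is reconciling the $\delta_n$ correction in Corollary~\ref{cor:frieze-formula} with the $\pm(-1)^k\cdot 2$ term in Theorem~\ref{thm:repth} when $n=2k$ is even: both encode the contribution of the empty cyclic pair-excluding subset, and agree because an even-length cyclic set admits exactly two consecutive pair-tilings whereas Corollary~\ref{cor:frieze-formula} counts the empty subset only once and absorbs the missing half into $\delta_n$. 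This subtlety is sidestepped entirely by the direct approach above.
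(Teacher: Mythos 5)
Your primary argument is correct and is, in substance, the shortest possible route: Proposition~\ref{prop:entries-via-s}(2) identifies $s(M)=a_{i,i+n-1}$ and $s(\widetilde M)=a_{i+1,i+n-2}$, and since these are a frieze entry at level $n$ and the entry directly above it at level $n-2$, their difference is the growth coefficient $s_q=a_{1,n}-a_{2,n-1}$ by its definition together with the column-independence of this difference (Proposition~\ref{prop:growth}(1), i.e.\ \cite[Theorem 2.2]{bfpt}). The paper instead frames the corollary as a consequence of Theorem~\ref{thm:repth} combined with Corollary~\ref{cor:frieze-formula}, i.e.\ one matches the module-theoretic alternating sum over the quotients $N_{j_1,\dots,j_k}$ with the cyclic pair-excluding formula for $s_q$; note, however, that the first line of the paper's proof of Theorem~\ref{thm:repth} is exactly your identification $s(M)-s(\widetilde M)=a_{i,i+t-1}-a_{i+1,i+t-2}$, so the two routes share the same engine and differ only in whether one passes through the combinatorial formula. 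Your direct path buys simplicity and avoids the one genuinely delicate point of the indirect path, which you also diagnose correctly: for even $n$ the final term $\pm(-1)^k\cdot 2$ in Theorem~\ref{thm:repth} counts the two complete tilings of the cyclic index set by adjacent pairs (equivalently, the empty exclusion counted twice), while Corollary~\ref{cor:frieze-formula} counts the empty cyclic pair-excluding subset once and restores the second copy via $\delta_n=(-1)^{n/2}$; since $2(-1)^{n/2}=(-1)^{n/2}+\delta_n$, the two bookkeepings agree and the indirect derivation closes as well. In short, your proof is valid and marginally more economical than the paper's intended derivation, which in exchange exhibits $s_q$ through the explicit module-theoretic expansion.
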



\noindent {\bf Acknowledgements:} The authors would like to thank Chelsea Walton, Georgia Benkart, 
Eleonore Faber, Ellen Kirkman and other organizers for organizing WINART2 
at University of Leeds, where this project started. 

\noindent {\bf Funding:} This work was supported by the FWF [grant numbers P30549-N26, DK1230]; a Royal Society Wolfson Fellowship; the Engineering and Physical Sciences Research Council [grant number EP/P016014/1]; the Alexander von Humboldt Fellowship; the Institute for Advanced Study (Princeton) and the Max Planck Institute for Mathematics (Bonn).

\end{document}